\newcommand{\erase}[1]{}
\newtheorem{theorem}{Theorem}[section]
\newtheorem{proposition}[theorem]{Proposition}
\newtheorem{_remark}[theorem]{\it Remark}
\newenvironment{remark}{\begin{_remark}\rm}{\end{_remark}}
\newtheorem{_definition}[theorem]{\bf Definition}
\newenvironment{definition}{\begin{_definition}\rm}{\end{_definition}}
\numberwithin{equation}{section}
\numberwithin{table}{section}
\numberwithin{figure}{section}
\newcommand{\F}{\mathord{\mathbb F}}
\renewcommand{\P}{\mathord{\mathbb  P}}
\newcommand{\Z}{\mathord{\mathbb  Z}}
\newcommand{\CCC}{\mathord{\mathcal C}}
\newcommand{\DDD}{\mathord{\mathcal D}}
\newcommand{\EEE}{\mathord{\mathcal E}}
\newcommand{\KKK}{\mathord{\mathcal K}}
\newcommand{\NNN}{\mathord{\mathcal N}}
\newcommand{\PPP}{\mathord{\mathcal P}}
\newcommand{\QQQ}{\mathord{\mathcal Q}}
\newcommand{\RRR}{\mathord{\mathcal R}}
\newcommand{\SSS}{\mathord{\mathcal S}}
\newcommand{\VVV}{\mathord{\mathcal V}}
\newcommand{\graphG}{G}
\newcommand{\shortset}[2]{\{ {#1} \,|\, {#2}   \}}
\newcommand{\isom}{\mathbin{\,\raise -.6pt\rlap{$\to$}\raise 3.5pt \hbox{\hskip .3pt$\mathord{\sim}$}\,}}
\newcommand{\set}[2]{\{\; {#1} \; \mid \; {#2} \;  \}}
\newcommand{\wt}{\widetilde}
\newcommand{\sprime}{\sp\prime}
\newcommand{\spprime}{\sp{\prime\prime}}
\newcommand{\inv}{\sp{-1}}
\newcommand{\PGU}{\mathord{\mathrm {PGU}}}
\newcommand{\PSU}{\mathord{\mathrm {PSU}}}
\newcommand{\PGL}{\mathord{\mathrm {PGL}}}
\newcommand{\PSL}{\mathord{\mathrm {PSL}}}
\newcommand{\PGO}{\mathord{\mathrm {PGO}}}
\newcommand{\Aut}{\operatorname{\mathrm {Aut}}\nolimits}
\newcommand{\PK}{\PPP\KKK}
\newcommand{\vp}{\mathord{\bf p}}
\newcommand{\vx}{\mathord{\bf x}}
\newcommand{\vP}{\mathord{\bf P}}
\newcommand{\Plist}{\mathord{\tt P}}
\newcommand{\Qlist}{\mathord{\tt Q}}
\newcommand{\Slist}{\mathord{\tt S}}
\newcommand{\SQlist}{\mathord{\tt SQ}}
\newcommand{\EQlist}{\mathord{\tt EQ}}
\newcommand{\Dlist}{\mathord{\tt D}}
\newcommand{\Klist}{\mathord{\tt K}}
\newcommand{\Tmat}{\mathord{\mathbb T}}
\newcommand{\stab}{\mathord{\rm stab}}
\newcommand{\Conj}{\mathord{\rm Conj}}
\newcommand{\AAAA}{\mathord{\mathfrak{A}}}
\newcommand{\DDDD}{\mathord{\mathfrak{D}}}
\newcommand{\SSSS}{\mathord{\mathfrak{S}}}
\newcommand{\gen}[1]{\langle #1\rangle}
\begin{document}

\title[Hoffman-Singleton, Higman-Sims,  McLaughlin, and Hermitian curve]
{The graphs of Hoffman-Singleton, Higman-Sims,  and McLaughlin,  and the Hermitian curve\\
 of degree $6$ in characteristic $5$}

\author{Ichiro Shimada}
\address{
Department of Mathematics, 
Graduate School of Science, 
Hiroshima University,
1-3-1 Kagamiyama, 
Higashi-Hiroshima, 
739-8526 JAPAN
}
\email{shimada@math.sci.hiroshima-u.ac.jp}

\thanks{Partially supported by
JSPS Grants-in-Aid for Scientific Research (C) No.25400042}

\begin{abstract}
We construct the graphs of Hoffman-Singleton, Higman-Sims,  and   McLaughlin
 from certain  relations
on the set of non-singular conics totally tangent to the Hermitian curve of degree $6$
in characteristic $5$. 
We then interpret this geometric construction 
in terms of the subgroup structure of the automorphism group of this Hermitian curve. 
\end{abstract}

\subjclass[2010]{51E20, 05C25}


\maketitle


\section{Introduction}\label{sec:Intro}
The Hoffman-Singleton graph~\cite{MR0140437}
is the unique strongly regular graph of parameters
$(v, k, \lambda, \mu)=(50, 7, 0, 1)$.
Its automorphism group
contains  $\PSU_{3}(\F_{25})$  as a subgroup of index $2$.
The Higman-Sims graph~\cite{MR0227269}
is the unique strongly regular graph of parameters
$(v, k, \lambda, \mu)=(100, 22, 0, 6)$. See~\cite{MR0304232} for the uniqueness.
Its automorphism group
contains the Higman-Sims group  as a subgroup of index $2$.
The McLaughlin graph~\cite{MR0242941} is the unique strongly regular graph of parameters
$(v, k, \lambda, \mu)=(275, 112,30,56)$.
See~\cite{MR0384597} for the uniqueness.
Its automorphism group
contains the McLaughlin group  as a subgroup of index $2$.
Many constructions of these  beautiful graphs
are known (see, for example,~\cite{MR782310}).
\par
\smallskip
These three graphs are closely related.
The Higman-Sims graph has been  constructed from the set of $15$-cocliques in the Hoffman-Singleton graph
(see~Hafner~\cite{MR2114181}).
Recently, the McLaughlin graph has been constructed from  the Hoffman-Singleton graph 
by Inoue~\cite{MR2917927}.
\par
\smallskip
On the other hand,
looking at  the  automorphism group,
one  naturally expects a relation of the Hoffman-Singleton graph
with the classical unital in  $\P^2(\F_{25})$.
In fact, Benson and Losey~\cite{MR0281658} constructed the Hoffman-Singleton graph 
by means  of the  geometry  of $\P^2(\F_{25})$ equipped with a Hermitian polarity.
They constructed a bijection between the set of  claws 
in the graph and the set of polar triangles 
on the plane compatible with the natural action of $\PSU_{3}(\F_{25})$.
\par
\smallskip
In this paper,
we give a unified  geometric construction (Theorems~\ref{thm:mainHfSg}~and~\ref{thm:mainHgSm}) of 
the Hoffman-Singleton graph and 
the Higman-Sims graph    by means of
 non-singular conics totally tangent to the Hermitian curve $\Gamma_5$ of degree $6$
in characteristic $5$. 
Using this result, we recast Inoue's construction~\cite{MR2917927} of the McLaughlin graph
in a simpler form (Theorem~\ref{thm:mainMcL}).
We then translate this construction to a group-theoretic construction
in terms of the subgroup structure of the automorphism group $\PGU_3(\F_{25})$ of the curve $\Gamma_5$
(Theorems~\ref{thm:S5},~\ref{thm:T7} and~\ref{thm:A7}).
In fact, it turns out at the final stage that we can construct the  graphs  
without mentioning any geometry (Theorem~\ref{thm:purelygroup}).
\subsection{Geometric construction}\label{subsec:geomconst}
In order to emphasize the algebro-geometric character of our construction,
we work, not over $\F_{25}$, but  over an algebraically closed field $k$ of characteristic $5$.
A projective plane curve $\Gamma$ of degree $6$ is said to be a \emph{$k$-Hermitian curve}
if $\Gamma$ is projectively isomorphic to the  Fermat  curve
$$
{\Gamma_5}\;\;: \;\;x^6+y^6+z^6=0
$$
of degree $6$.
Several characterizations of $k$-Hermitian curves are known;
for example, see~\cite{MR1092144} for a characterization
by  reflexivity.
Geometric  properties of $k$-Hermitian curves that will be used in the following can be found in~\cite{MR0200782},~\cite{MR0213949},
or~\cite[Chap.~23]{MR1363259}.
Let $\Gamma$ be a $k$-Hermitian curve.
Its automorphism group 
$$
\Aut(\Gamma)=\set{g\in \PGL_3(k)}{g(\Gamma)=\Gamma}
$$
is conjugate to $\Aut({\Gamma_5})=\PGU_3(\F_{25})$ in $\Aut(\P^2)=\PGL_3(k)$.
In particular, its order is $378000$,
and it contains a subgroup of index $3$ isomorphic to  the simple group $\PSU_3(\F_{25})$.
Let $P$ be a point of $\Gamma$.
Then the tangent line $l_P$ to $\Gamma$ at $P$ intersects $\Gamma$ at $P$ with intersection multiplicity $\ge 5$.
When  $\Gamma=\Gamma_5$,
the line $l_P$  intersects $\Gamma$ at $P$ with intersection multiplicity $6$
if and only if $P$ is an $\F_{25}$-rational point.
Combining this fact with the result of~\cite{MR847097} and~\cite{MR1373551},
we see that 
a point $P$ of $\Gamma$ is a Weierstrass point of $\Gamma$ if and only if
$l_P$  intersects $\Gamma$ at $P$ with intersection multiplicity $6$.
Let ${\PPP}$ denote the set of Weierstrass points of $\Gamma$.
Then we have $|{\PPP}|=126$.
The group  $\Aut(\Gamma)$ acts on ${\PPP}$ doubly transitively.
\begin{definition}\label{def:secant}
A line $L$ of $\P^2$ is a \emph{special secant line of $\Gamma$} 
if $L$ passes through two distinct points of ${\PPP}$.
\end{definition}
Let $\SSS$ denote the set of special secant lines of $\Gamma$.
Then we have 
$|\SSS|=525$.
The group  $\Aut(\Gamma)$ acts on $\SSS$ transitively.
If $L\in \SSS$,
then we have $|L\cap \Gamma|=6$ and 
$$
L\cap \Gamma\subset {\PPP}.
$$
\par
\smallskip
The incidence structure on the set of Weierstrass points of
a Hermitian curve (over an arbitrary finite field) induced by special secant lines 
has been studied by many authors.
See~\cite{MR2071863} for these works.
\begin{definition}\label{def:ttconics}
A  non-singular conic $Q$ on $\P^2$ is said to be \emph{totally tangent to $\Gamma$}
if $Q$ intersects $\Gamma$ at six distinct  points with intersection multiplicity $2$.
\end{definition}
\par
\smallskip
Let $\QQQ$ denote the set of non-singular conics totally tangent to $\Gamma$.
Then  we have
$|\QQQ|=3150$.
The group  $\Aut(\Gamma)$ acts on $\QQQ$ transitively.
For each $Q\in \QQQ$, 
we have
$$
Q\cap \Gamma\subset {\PPP}.
$$
A special secant line $L$ of $\Gamma$ is said to be a \emph{special secant line of $Q \in \QQQ$}
if $L$ passes through  two distinct points of $Q\cap \Gamma$.
We denote by $\SSS(Q)$ the set of special secant lines of $Q$.
Since $|Q\cap \Gamma|=6$,  we obviously have $|\SSS(Q)|=15$.
\par
\smallskip
Non-singular conics totally tangent to a Hermitian curve  were 
investigated by B.~Segre~\cite[n.~81]{MR0213949}.
See also~\cite{MR3092762}
for a simple proof of a higher dimensional analogue of Segre's results.
\par
\smallskip
A \emph{triangular graph} $T(m)$ is defined to be the graph 
whose set of vertices is the set of unordered pairs of distinct elements of 
$\{1,2, \dots, m\}$
and whose set of edges is the set of pairs $\{\{i,j\}, \{i\sprime, j\sprime\}\}$ such that
$\{i,j\}\cap \{i\sprime, j\sprime\}$ is non-empty (see~\cite{MR0406822}).
It is easy to see 
 that $T(m)$ is a strongly regular graph of parameters $(v, k, \lambda, \mu)=(\,m(m-1)/2,\, 2(m-2),\, m-2, \,4\,)$.
\par
\smallskip
Our construction  proceeds as follows.
\begin{proposition}\label{prop:G}
Let $\graphG$ be the graph whose set of vertices  is 
$\QQQ$ and whose set of edges is the set of pairs 
$\{Q, Q\sprime\}$  of distinct conics in $\QQQ$
such that $Q$ and $Q\sprime$ intersect transversely {\rm(}that is, $|Q\cap Q\sprime|=4${\rm)} and 
$|\SSS(Q)\cap \SSS(Q\sprime)|=3$.
Then $\graphG$ has exactly $150$ connected components,
and each connected component $D$ is isomorphic to
the triangular graph $T(7)$.
\end{proposition}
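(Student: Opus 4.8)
The plan is to use the transitivity of $\Aut(\Gamma)$ on $\QQQ$ to reduce the statement to a local analysis around a single conic, and then to reconstruct the triangular graph from the combinatorics of a component. First I would note that the edge relation defining $\graphG$ is projective: both $|Q\cap Q\sprime|=4$ and $|\SSS(Q)\cap\SSS(Q\sprime)|=3$ are preserved by $\PGL_3(k)$, hence by $\Aut(\Gamma)$. Since $\Aut(\Gamma)$ is transitive on the vertex set $\QQQ$, the graph $\graphG$ is vertex-transitive, so all of its connected components are mutually isomorphic and share a common number $N$ of vertices, and the number of components is $|\QQQ|/N=3150/N$. Thus it suffices to show that the component $D$ containing a fixed conic $Q_0$ is isomorphic to $T(7)$ and has $21$ vertices; the count $3150/21=150$ is then automatic.

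The next step is to pin down the adjacency relation geometrically. The key observation is that two totally tangent conics meeting transversely can share no tangency point with $\Gamma$: if $P\in(Q\cap\Gamma)\cap(Q\sprime\cap\Gamma)$, then both $Q$ and $Q\sprime$ are tangent at $P$ to the same line $l_P$, so the local intersection multiplicity of $Q$ and $Q\sprime$ at $P$ is at least $2$, whence $|Q\cap Q\sprime|\le 3$, contradicting $|Q\cap Q\sprime|=4$. Consequently, for an edge $\{Q_0,Q\sprime\}$ the six tangency points of $Q_0$ are disjoint from the six of $Q\sprime$, and each of the three shared special secants carries two tangency points of each conic. I would then write $Q_0\cap\Gamma=\{P_1,\dots,P_6\}$ and study the action of $\Stab(Q_0)$ on the neighbors. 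By orbit--stabilizer $|\Stab(Q_0)|=|\Aut(\Gamma)|/|\QQQ|=378000/3150=120$, and this group permutes the six tangency points; I expect to verify that it acts $3$-transitively on them, as $\PGL_2(\F_5)$ on $\P^1(\F_5)$. Under such an action the unique orbit of length $10$ on configurations of the six points is the set of partitions into two triples, and I would show that the neighbors of $Q_0$ are exactly these ten, so that $\graphG$ has degree $10=2(7-2)$.

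To complete the identification I would verify that each connected component of $\graphG$ is a strongly regular graph with $k=10$, $\lambda=5$ and $\mu=4$: the degree is $10$ by the previous step, and one computes that two adjacent conics have exactly five common neighbors while two non-adjacent conics in the same component have exactly four. The standard relation $k(k-\lambda-1)=(v-k-1)\mu$ then forces $v=21$, and the uniqueness of the strongly regular graph with parameters $(21,10,5,4)$ identifies each component with $T(7)$; the count $3150/21=150$ follows. Intrinsically, the seven-element ground set of each $T(7)$ is recovered as the set of seven maximal $6$-cliques of the component, each conic lying on exactly two of them, which is the point--pair incidence of $T(7)$ and is the structure later matched with the subgroup $A_7$ of $\Aut(\Gamma)$.

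The main obstacle is the middle step: proving that $Q_0$ has exactly ten neighbors, and with it the common-neighbor counts $\lambda=5$ and $\mu=4$. The difficulty is that the three shared special secants do not reduce to a single incidence pattern among the six tangency points of $Q_0$ alone, since neither a perfect matching nor a triangle of those points has a $\Stab(Q_0)$-stabilizer of the required order $12$; the associated $3+3$ partition must instead be read off from the full configuration of the two conics, their four transverse intersection points, and the placement of the twelve tangency points along the shared secants. I expect this to require a careful analysis of how a pencil of totally tangent conics meets the $525$ special secants, and in the finite model over $\F_{25}$ it can be confirmed by direct computation with the explicit list of the $3150$ conics. Once this local picture is in hand, the strong-regularity computation, the forced value $v=21$, and the final count are immediate.
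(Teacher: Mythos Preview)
Your outline is sound in its reductions (vertex-transitivity, disjointness of tangency sets for adjacent conics, the count $3150/21=150$), and you correctly isolate the hard step as determining the local structure around $Q_0$. However, your route differs from the paper's, and one of your heuristics is wrong. The paper gives a purely computational proof: from the explicit lists $\Qlist$, $\SQlist$, $\EQlist$ it builds the adjacency matrix $A_{\graphG}$, decomposes $\graphG$ into $150$ components of $21$ vertices each, and for the component $D_1$ containing $Q_1:x^2+y^2+z^2=0$ exhibits the isomorphism $D_1\cong T(7)$ directly by arranging the $21$ conics in a triangular array (Table~\ref{table:adjD1}); transitivity of $\Aut(\Gamma_5)$ on $\QQQ_5$ then carries this to every component. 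There is no appeal to strong-regularity parameters or to the uniqueness of $T(7)$ as an $\mathrm{SRG}$.

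Your specific identification of the ten neighbours with the $3{+}3$ partitions of $Q_0\cap\Gamma$ does not hold. The stabiliser $\stab(Q_1)\cap\stab(Q')$ of an adjacent conic $Q'$ is isomorphic to $\AAAA_4$ (this is exactly the first half of Theorem~\ref{thm:S5}, and is recorded in row~2 of Table~\ref{table:orbQs}), whereas the stabiliser in $\PGL_2(\F_5)$ of a partition of $\P^1(\F_5)$ into two triples is the dihedral group $\DDDD_{12}$; these are non-isomorphic $\stab(Q_1)$-sets of size $10$. In fact Table~\ref{table:orbQs} shows that $\stab(Q_1)$ has several orbits of length $10$ on $\QQQ_5$ (rows $2$, $3$, $6$ contribute six of them), so no canonical ten-element set built only from the six tangency points will single out the neighbours without the transversality condition. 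Your fallback---direct computation over $\F_{25}$---is precisely what the paper does, but once one is computing it is simpler to read off the triangular arrangement than to verify $\lambda=5$ and $\mu=4$ separately; the latter is also awkward because establishing that $\mu$ is constant presupposes either knowledge of the component or a rank-$3$ action on it, neither of which you have in advance.
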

Let $\DDD$ denote the set of connected components 
of the graph $\graphG$.
Each $D\in \DDD$ is a collection of $21$ conics in $\QQQ$
that satisfy the following property:
\begin{proposition}\label{prop:geomD}
Let $D\in \DDD$ be a connected component of $\graphG$.
Then we have
$$
|Q\cap Q\sprime\cap \Gamma|=0
$$
for any distinct conics $Q, Q\sprime$ in $D$.
Since $|D|\times 6=|{\PPP}|$, 
each connected component $D$ of $\graphG$ gives rise to a decomposition of ${\PPP}$ into 
a disjoint union of  $21$  sets $Q\cap \Gamma$ of six points, where  $Q$ runs through $D$.
\end{proposition}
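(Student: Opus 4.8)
The plan is to reduce the statement to a purely local count and then treat the transverse and non-transverse cases separately. Writing $A_Q := Q\cap\Gamma\subset\PPP$ for the six-point set cut out on $\Gamma$ by a conic $Q\in\QQQ$, the assertion $|Q\cap Q\sprime\cap\Gamma|=0$ for all distinct $Q,Q\sprime\in D$ is exactly the statement that the $21$ sets $A_Q$, $Q\in D$, are pairwise disjoint. Since each contributes $|A_Q|=6$ and $21\times 6=126=|\PPP|$, pairwise disjointness is equivalent to the condition that each point of $\PPP$ lie in \emph{at most} one $A_Q$, and, the total count being fixed, this forces each point to lie in \emph{exactly} one $A_Q$; hence proving the Proposition simultaneously yields the partition of $\PPP$ asserted in its final sentence. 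Thus it suffices to bound $n_P:=\#\{Q\in D : P\in A_Q\}$ by $1$ for every $P\in\PPP$.

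First I would dispose of the transverse pairs by a tangency argument. If $P\in A_Q\cap A_{Q\sprime}$, then, since $Q$ and $Q\sprime$ each meet $\Gamma$ at $P$ with intersection multiplicity $2$ and $P$ is a smooth point of $\Gamma$, both $Q$ and $Q\sprime$ are tangent at $P$ to the line $l_P$, hence tangent to one another at $P$; so the local intersection number $(Q\cdot Q\sprime)_P\ge 2$, and B\'ezout's theorem ($Q\cdot Q\sprime=4$) gives $|Q\cap Q\sprime|\le 3$. Therefore $Q$ and $Q\sprime$ do not meet transversely and, by the definition of the edges of $\graphG$, are not adjacent; equivalently, adjacent conics in $D$ already have disjoint intersections with $\Gamma$. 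Combining this with the identification of $D$ with $T(7)$, I label the conics of $D$ by the $2$-subsets of a $7$-element set: two conics whose labels share an element are adjacent and so cannot share a point of $\Gamma$, whence the labels of the conics containing a fixed $P$ are pairwise disjoint and form a matching in the index set. As a matching on $7$ elements has at most $3$ edges, this already gives $n_P\le 3$.

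The remaining and main difficulty is to exclude $n_P\in\{2,3\}$, i.e.\ to rule out that a pair of \emph{non}-adjacent conics (with disjoint labels) shares a point of $\Gamma$; here the tangency argument gives nothing, precisely because such conics are non-transverse. To attack this I would exploit the design structure: the incidence of $\PPP$ with $\SSS$ is a Steiner system $S(2,6,126)$ — every two Weierstrass points lie on a unique special secant line, which follows from the double transitivity of $\Aut(\Gamma)$ on $\PPP$ together with $|\SSS|=525$ and $|L\cap\Gamma|=6$ — so each $A_Q$ is a $6$-arc, $\SSS(Q)$ is its set of $15$ secant blocks, and for adjacent $Q,Q\sprime$ the three common secants each meet $A_Q$ and $A_{Q\sprime}$ in two points, splitting the two disjoint arcs into matched pairs. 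I would then propagate this rigid block structure around the $\mu=4$ common neighbours that any non-adjacent pair of $T(7)$ possesses, deriving a contradiction with a hypothetical shared point; alternatively, since $\Aut(\Gamma)\cong\PGU_3(\F_{25})$ acts transitively on $\QQQ$ and permutes the $150$ connected components of $\graphG$, it is enough to verify the disjointness for the conics of a single representative component, which can be carried out in the group-theoretic model developed later in the paper or by a direct computation over $\F_{25}$. I expect this non-adjacent case to be the crux: the clean geometry controls only transverse intersections, so closing the gap from $n_P\le 3$ to $n_P\le 1$ demands either the full combinatorial rigidity of the Steiner system or an explicit appeal to the automorphisms.
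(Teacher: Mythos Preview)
The paper's argument is much shorter and entirely computational: it simply verifies, from the explicit list $\Qlist$, that
\[
\bigcup_{Q\in D_1}(Q\cap\Gamma_5)=\PPP_5
\]
for the single component $D_1$ containing $Q_1$, and then notes that $\Aut(\Gamma_5)$ acts transitively on $\QQQ_5$ and hence on $\DDD$, so the same holds for every $D$. Since $21\cdot 6=126=|\PPP_5|$, the union being all of $\PPP_5$ forces the $21$ six-point sets to be pairwise disjoint, which is the claim. No distinction between adjacent and non-adjacent pairs is made; no tangency or Steiner-system reasoning appears.

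Your tangency argument for adjacent pairs is correct and is a genuine conceptual observation absent from the paper: it shows, without computation, that adjacent conics in $\graphG$ can never share a point of $\Gamma$, and the matching bound $n_P\le 3$ in $T(7)$ follows cleanly. But this extra structure is not used in the paper and is not needed once one accepts the computation.

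The gap in your proposal is the non-adjacent case. The sentence ``propagate this rigid block structure around the $\mu=4$ common neighbours \dots\ deriving a contradiction'' is not a proof: you do not say which blocks are being compared, what invariant is being tracked, or why a shared point is incompatible with the secant pattern. Nothing in the $S(2,6,126)$ design by itself obstructs two disjoint $6$-arcs from meeting, so a real argument here would have to exploit something specific to the $\QQQ$-conics or to the $T(7)$ adjacency, and you have not supplied it. Your ``alternatively'' clause---reduce to one component by transitivity and check over $\F_{25}$---is exactly the paper's method, so the only route in your proposal that actually closes the argument coincides with the paper's proof.
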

Using $\DDD$ as the set of vertices,
we construct two graphs $H$ and $H\sprime$
that contain the Hoffman-Singleton graph and the  Higman-Sims graph,
respectively.
\begin{proposition}\label{prop:typet}
Suppose that  $Q\in \QQQ$ and $D\sprime\in \DDD$ satisfy $Q\notin D\sprime$.
Then  one of the following holds:
\begin{eqnarray*}
{\rm (\alpha)} && 
|Q\cap Q\sprime\cap\Gamma|=\begin{cases}
2 & \textrm{for $3$ conics $Q\sprime\in D\sprime$,} \\
0 & \textrm{for $18$  conics $Q\sprime\in D\sprime$.}
\end{cases} \\
{\rm (\beta)} &&
|Q\cap Q\sprime\cap\Gamma|=\begin{cases}
2 & \textrm{for $1$ conic $Q\sprime\in D\sprime$,} \\
1 & \textrm{for $4$ conics $Q\sprime\in D\sprime$,} \\
0 & \textrm{for $16$  conics $Q\sprime\in D\sprime$.}
\end{cases} \\
{\rm (\gamma)} &&
|Q\cap Q\sprime\cap\Gamma|=\begin{cases}
1 & \textrm{for $6$ conics $Q\sprime\in D\sprime$,} \\
0 & \textrm{for $15$  conics $Q\sprime\in D\sprime$.}
\end{cases}
\end{eqnarray*}
\end{proposition}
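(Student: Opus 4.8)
First I would record a reduction to the combinatorics of the partition of ${\PPP}$ attached to $D\sprime$. Write $h=Q\cap\Gamma$ and, for $Q\sprime\in D\sprime$, write $h\sprime=Q\sprime\cap\Gamma$; these are $6$-element subsets of ${\PPP}$. Since $Q\cap\Gamma\subset{\PPP}$ and $Q\sprime\cap\Gamma\subset{\PPP}$, we have $Q\cap Q\sprime\cap\Gamma=h\cap h\sprime$, so $|Q\cap Q\sprime\cap\Gamma|=|h\cap h\sprime|$. By Proposition~\ref{prop:geomD} the $21$ sets $h\sprime$ are pairwise disjoint, and since $21\times 6=126=|{\PPP}|$ they partition ${\PPP}$; hence the $6$ points of $h$ are distributed among these $21$ blocks and $\sum_{Q\sprime\in D\sprime}|h\cap h\sprime|=6$. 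Setting $a=\#\{Q\sprime:|h\cap h\sprime|=2\}$ and $b=\#\{Q\sprime:|h\cap h\sprime|=1\}$, the three asserted types $(\alpha),(\beta),(\gamma)$ are precisely the cases $(a,b)=(3,0),(1,4),(0,6)$.

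The second step bounds each intersection number by Bézout. If $P\in h\cap h\sprime$, then by total tangency (Definition~\ref{def:ttconics}) both $Q$ and $Q\sprime$ meet $\Gamma$ at the smooth point $P$ with multiplicity $2$, so each shares the tangent line $l_P$ there; therefore $Q$ and $Q\sprime$ are tangent to one another at $P$ and the local intersection multiplicity $(Q\cdot Q\sprime)_P$ is at least $2$. As distinct non-singular conics share no component, Bézout gives total intersection $4$, whence $2\,|h\cap h\sprime|\le 4$, i.e. $|h\cap h\sprime|\le 2$. Consequently $2a+b=6$ forces $(a,b)\in\{(3,0),(2,2),(1,4),(0,6)\}$, which is exactly $(\alpha)$, the spurious case $(a,b)=(2,2)$, $(\beta)$, and $(\gamma)$. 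It remains to exclude $(a,b)=(2,2)$.

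This exclusion is the heart of the matter, and I expect it to be the main obstacle. Since $\Aut(\Gamma)$ is transitive on the $150$ components of $\DDD$, the stabiliser $\Stab(D\sprime)$ has order $378000/150=2520$; its faithful action on the $21$ conics of $D\sprime\cong T(7)$ (Proposition~\ref{prop:G}) realises it as the alternating group $A_7$ acting on the $2$-subsets of a $7$-set $\Omega$, so that the $21$ blocks are indexed by the pairs of $\Omega$. To each external $Q$ I would attach the $\Stab(D\sprime)$-equivariant invariant $A(Q)=\{\,Q\sprime\in D\sprime:|h\cap h\sprime|=2\,\}$, a set of pairs of $\Omega$ with $|A(Q)|=a$, and then classify the $\Stab(D\sprime)$-orbits of external conics, showing that $A(Q)$ is always either empty (type $\gamma$), a single pair (type $\beta$), or the three pairs contained in a $3$-subset of $\Omega$ (type $\alpha$), the last case corresponding geometrically to the three secants $\overline{P_iP_j}$ cut out by the $2$-overlaps forming a triangle. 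The crux is to rule out $|A(Q)|=2$, i.e. that two $2$-overlap blocks cannot occur without a third; I would attack this through the orbit analysis together with the secant-incidence relations inside a single block, using that each special secant carries exactly $6$ Weierstrass points (as recorded after Definition~\ref{def:secant}) and meets each conic of $D\sprime$ in at most $2$ of them. Should a synthetic argument prove elusive, transitivity of $\Aut(\Gamma)$ on $\QQQ$ reduces the verification to finitely many orbit representatives of pairs $(Q,D\sprime)$, which can be settled by explicit computation over $\F_{25}$.
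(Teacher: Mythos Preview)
Your first two steps are correct and genuinely more informative than what the paper does. The paper's entire proof of this proposition is a one-line computer verification: from the precomputed lists $\Qlist$ and $\Dlist$ and the matrix $M_0$ of values $|Q_i\cap Q_j\cap\Gamma_5|$, one simply checks the claim for every pair $(Q,D\sprime)$. There is no conceptual argument at all. Your reduction via Proposition~\ref{prop:geomD} to the distribution of the six points of $h=Q\cap\Gamma$ among the $21$ blocks of a partition of $\PPP$, together with the B\'ezout bound $|h\cap h\sprime|\le 2$ coming from the shared tangent at each common Weierstrass point, is clean and correct, and it explains \emph{a priori} why only the four profiles $(a,b)\in\{(3,0),(2,2),(1,4),(0,6)\}$ can occur. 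This is a real gain over the paper.

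The gap is your third step. You correctly identify that excluding $(a,b)=(2,2)$ is the whole content of the proposition beyond the B\'ezout reduction, but you do not actually exclude it. The proposed orbit analysis is only a sketch: the identification $\Stab(D\sprime)\cong\AAAA_7$ is Theorem~\ref{thm:T7}, which in the paper is established \emph{after} Proposition~\ref{prop:typet} and itself by computation (one still needs faithfulness of the action on $D\sprime$, which is not obvious from the order count alone); and even granting $\AAAA_7$, you do not carry out the classification of $\Stab(D\sprime)$-orbits on $\QQQ\setminus D\sprime$ nor the promised secant-incidence argument that would force a third $2$-overlap block whenever two are present. Your final sentence, reducing to orbit representatives and explicit computation over $\F_{25}$, is exactly the paper's method applied to a smaller set; so as written the proposal ultimately relies on the same computational verification for the only nontrivial part. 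If you want a self-contained synthetic proof, the missing ingredient is a direct argument ruling out $(2,2)$; otherwise, acknowledge that the exclusion is done by machine, as the paper does.
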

%
For $Q\in \QQQ$ and $D\sprime\in \DDD$ satisfying  $Q\notin D\sprime$, we define
$t(Q, D\sprime)$ to be $\alpha, \beta$ or $\gamma$ according to the cases in  Proposition~\ref{prop:typet}.
%
%
\begin{proposition}\label{prop:typeT}
Suppose that  $D, D\sprime\in \DDD$ are distinct,
and hence disjoint as subsets of $\QQQ$.
Then  one of the following holds:
\begin{eqnarray*}
{\rm (\beta^{21})} 
&&
t(Q, D\sprime)=\beta \quad \textrm{for all $Q\in D$}.
\\
{\rm (\gamma^{21})} 
&&
t(Q, D\sprime)=\gamma \quad \textrm{for all $Q\in D$}.
\\
{\rm (\alpha^{15}\gamma^{6})} 
&&
t(Q, D\sprime)=\begin{cases}
\alpha & \textrm{for $15$ conics $Q\in D$,} \\
\gamma & \textrm{for $6$  conics $Q\in D$.}
\end{cases}
\\
{\rm (\alpha^{3}\gamma^{18})} 
&&
t(Q, D\sprime)=\begin{cases}
\alpha & \textrm{for $3$ conics $Q\in D$,} \\
\gamma & \textrm{for $18$  conics $Q\in D$.}
\end{cases}
\end{eqnarray*}
\end{proposition}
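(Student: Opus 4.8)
The plan is to reformulate the statement as an assertion about two partitions of the $126$ Weierstrass points and to isolate the two combinatorial lemmas that carry the geometric content. By Proposition~\ref{prop:geomD} the component $D$ partitions $\PPP$ into the $21$ pairwise-disjoint hexads $Q\cap\Gamma$ ($Q\in D$), and by Proposition~\ref{prop:G} these hexads are indexed by the pairs of a $7$-set $\Omega$ so that $D\cong T(7)$; the same holds for $D\sprime$ and a $7$-set $\Omega\sprime$. Since $Q\cap\Gamma$ and $Q\sprime\cap\Gamma$ are contained in $\PPP$, we have $|Q\cap Q\sprime\cap\Gamma|=|(Q\cap\Gamma)\cap(Q\sprime\cap\Gamma)|$, so everything is governed by the $21\times21$ matrix $M$ with entries $M_{QQ\sprime}=|(Q\cap\Gamma)\cap(Q\sprime\cap\Gamma)|\in\{0,1,2\}$, all of whose rows and columns sum to $6$; the value $t(Q,D\sprime)$ is just the multiset of nonzero entries in the row of $Q$, which by Proposition~\ref{prop:typet} is one of $\{2,2,2\}$, $\{2,1,1,1,1\}$, $\{1,1,1,1,1,1\}$ (types $\alpha,\beta,\gamma$), and similarly for every column.

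I would next record the one useful invariant. Let $N_2$ be the number of entries of $M$ equal to $2$; counting pairs of points of $\PPP$ lying in a common hexad of $D$ and in a common hexad of $D\sprime$ gives $N_2=\sum_{Q,Q\sprime}\binom{M_{QQ\sprime}}{2}$, which is symmetric in $D$ and $D\sprime$. With $a,b,c$ the numbers of rows of type $\alpha,\beta,\gamma$ one has $a+b+c=21$ and $N_2=3a+b$, and the four target profiles are precisely those with $N_2\in\{45,21,9,0\}$. A warning is in order: the evident moment identities $\sum M_{QQ\sprime}=126$ and $\sum M_{QQ\sprime}^2=126+2N_2$ both reduce to $a+b+c=21$, so counting alone cannot rule out spurious profiles such as $(a,b,c)=(7,0,14)$; structural input is unavoidable.

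The content thus splits into two lemmas. Lemma~(I): $b\in\{0,21\}$, and $b=21$ gives case $(\beta^{21})$ with $a=c=0$. Lemma~(III): when $b=0$ the profile $(a,0,21-a)$ is determined by $a$, and $a\in\{0,3,15\}$, yielding $(\gamma^{21})$, $(\alpha^{3}\gamma^{18})$, $(\alpha^{15}\gamma^{6})$. For both I would use the $T(7)$-adjacency of Proposition~\ref{prop:G}: adjacent conics $Q,Q\sprime\in D$ share exactly three special secant lines, and since their hexads are disjoint each line of $\SSS(Q)\cap\SSS(Q\sprime)$ carries two points of $Q\cap\Gamma$ and two of $Q\sprime\cap\Gamma$. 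This pins down how the supports of the rows of $Q$ and $Q\sprime$ align among the hexads of $D\sprime$, giving a local rule relating $t(Q,D\sprime)$ and $t(Q\sprime,D\sprime)$ for adjacent $Q,Q\sprime$. Propagating this rule around the connected graph $T(7)$ should force the type-$\alpha$ conics (resp. the type-$\beta$ conics) to form a distinguished $T(7)$-configuration inside $D$---empty, a triangle on a triple of $\Omega$ ($3$ pairs), the edges within a $6$-subset ($15$ pairs), or all of $D$---which is exactly the content of the two lemmas.

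The main obstacle is this propagation: converting ``three shared secant lines'' into a precise description of how the tangency points of $Q$ and of $Q\sprime$ occupy the hexads of $D\sprime$, and checking that no configurations beyond the four listed survive. This requires the detailed incidence geometry of conics and secants on the Hermitian unital, not mere counting; in line with the computational nature of the surrounding results, the cleanest completion is to verify this finite local rule---hence the full classification---by explicit computation over $\F_{25}$, with the symmetric invariant $N_2\in\{45,21,9,0\}$ serving as an independent check that also names which case occurs.
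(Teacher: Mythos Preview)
Your combinatorial reformulation is correct and more illuminating than what the paper actually does: the $21\times 21$ intersection matrix $M$ with entries in $\{0,1,2\}$ and constant row/column sums $6$, the invariant $N_2=3a+b$, and the identification of the four target profiles with $N_2\in\{0,9,21,45\}$ are all valid. You are also right that the moment identities collapse and cannot by themselves exclude profiles like $(a,b,c)=(7,0,14)$.

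However, the paper does not attempt any of this. Its entire proof of Proposition~\ref{prop:typeT} is the sentence ``Using the matrix $M_0$ and the list $\Dlist$, we confirm Proposition~\ref{prop:typet} and~\ref{prop:typeT}'': it computes the $3150\times 3150$ matrix of values $|Q_i\cap Q_j\cap\Gamma_5|$, the list of $150$ components, and checks the assertion by brute force. There is no structural argument at all.

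Your proposal, by contrast, sketches a genuine conceptual route via Lemmas~(I) and~(III) and the propagation of row-types along $T(7)$-edges, but you do not carry it out: the passage from ``adjacent $Q,Q\sprime\in D$ share three special secants'' to a concrete constraint relating $t(Q,D\sprime)$ and $t(Q\sprime,D\sprime)$ is left as a hope, and you yourself identify this as the main obstacle before retreating to computation. So as written your argument has the same logical status as the paper's---it is ultimately a computer verification---but dressed in a combinatorial framework whose hard step is not done. If you could actually prove the propagation rule (for instance, that the set of type-$\alpha$ rows is closed under $T(7)$-adjacency in a way that forces it to be $\emptyset$, a triple, a $\binom{6}{2}$, or all of $\binom{7}{2}$), that would be a real improvement over the paper; short of that, the framework is suggestive but not yet a proof.
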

%
For distinct $D, D\sprime\in \DDD$, we define
$T(D, D\sprime)$ to be $\beta^{21}$, $\gamma^{21}$, $\alpha^{15}\gamma^{6}$ or $\alpha^{3}\gamma^{18}$
  according to the cases in  Proposition~\ref{prop:typeT}.
%
\begin{proposition}\label{prop:symtypeT}
For distinct $D, D\sprime\in \DDD$, we have $T(D, D\sprime)=T(D\sprime, D)$.
For a fixed $D\in \DDD$, the number of $D\sprime\in \DDD$ such that $T(D, D\sprime)=\tau$ is
$$
\begin{cases}
30 & \textrm{if $\tau=\beta^{21}$}, \\
42 & \textrm{if $\tau=\gamma^{21}$}, \\
7 & \textrm{if $\tau=\alpha^{15}\gamma^6$}, \\
70 & \textrm{if $\tau=\alpha^{3}\gamma^{18}$}. 
\end{cases}
$$
\end{proposition}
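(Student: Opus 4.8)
The plan is to view each component $D\in\DDD$ through the partition of $\PPP$ into the $21$ six-point blocks $Q\cap\Gamma$ $(Q\in D)$ provided by Proposition~\ref{prop:geomD}, and to read $t(Q,D\sprime)$ as the partition type that the blocks of $D\sprime$ cut out on the six points of $Q\cap\Gamma$: by Proposition~\ref{prop:typet} this induced type is $2{+}2{+}2$, $2{+}1{+}1{+}1{+}1$, or $1{+}1{+}1{+}1{+}1{+}1$ according as $t(Q,D\sprime)=\alpha,\beta,\gamma$. With this dictionary, the symmetry $T(D,D\sprime)=T(D\sprime,D)$ comes for free. Let $e(D,D\sprime)$ be the number of pairs $(Q,Q\sprime)\in D\times D\sprime$ with $|Q\cap Q\sprime\cap\Gamma|=2$; this is manifestly symmetric in $D$ and $D\sprime$. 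Tallying these overlaps from Propositions~\ref{prop:typet} and~\ref{prop:typeT}, the four types give $e=21,\,0,\,45,\,9$ respectively, and since these values are distinct, $e(D,D\sprime)$ recovers $T(D,D\sprime)$; hence $T$ is symmetric.

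For the enumeration I would first observe that $\Aut(\Gamma)$ is transitive on $\DDD$: it permutes the components of $\graphG$ equivariantly and is already transitive on $\QQQ$, so $|\Stab(D)|=378000/150=2520$, which equals $|\AAAA_7|$. Transitivity on $\QQQ$ makes the ``first-moment'' constants $m_\alpha,m_\beta,m_\gamma$ — the number of components inducing each of the three partition types on a fixed six-set $Q\cap\Gamma$ — independent of $Q$. Double counting rows then links the four target counts $N_\tau$ to these constants; in particular, since a $\beta$-row occurs only in type $\beta^{21}$, one reads off $N_{\beta^{21}}=m_\beta$ at once, while the $\alpha$- and $\gamma$-row tallies give the remaining relations.

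A clean geometric input here is the number $r$ of non-singular totally tangent conics through a fixed pair of Weierstrass points. Double transitivity of $\Aut(\Gamma)$ on $\PPP$ makes $r$ uniform over all pairs, and counting incidences gives $r=|\QQQ|\binom{6}{2}/\binom{126}{2}=47250/7875=6$. Because two conics in one component never share two points of $\Gamma$ (Proposition~\ref{prop:geomD}), a fixed pair of points of $Q\cap\Gamma$ lies in a common block of exactly $r-1=5$ components other than $D$; summing over the $15$ pairs yields $3m_\alpha+m_\beta=75$, which together with $m_\alpha+m_\beta+m_\gamma=149$ and the row relations cuts the four counts down to a two-parameter family.

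The main obstacle is that these first-moment identities are genuinely insufficient: they leave a two-dimensional ambiguity, since a point-size count only reproduces the relation $3m_\alpha+m_\beta=75$ and contributes nothing new. To close the gap I must feed in second-order information — concretely the two numbers $m_\alpha=15$ and $N_{\alpha^{15}\gamma^6}=7$ — which record how the conics through two \emph{disjoint} pairs of points of $Q\cap\Gamma$ distribute among components, something transitivity alone cannot detect. I expect to obtain these from the explicit action of $\Stab(D)\cong\AAAA_7$ on the $149$ remaining components, coming from the classical maximal embedding $\AAAA_7<\PSU_3(\F_{25})$: decomposing into $\AAAA_7$-orbits (of expected sizes $7,42,15,15,70$) and computing $T(D,D\sprime)$ for one representative in each orbit, the value being constant along orbits by equivariance. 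Orbits of the same type then merge — the two size-$15$ orbits giving $N_{\beta^{21}}=15+15=30$ — and back-substitution into the linear relations forces $N_{\gamma^{21}}=42$, $N_{\alpha^{15}\gamma^6}=7$, and $N_{\alpha^3\gamma^{18}}=70$, as claimed.
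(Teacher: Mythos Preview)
Your symmetry argument via the invariant $e(D,D\sprime)=\#\{(Q,Q\sprime)\in D\times D\sprime:|Q\cap Q\sprime\cap\Gamma|=2\}$ is correct and is genuinely cleaner than what the paper does: the paper simply computes the full $150\times150$ matrix $\Tmat$ of types from the list $\Qlist$ and reads off both the symmetry and the four counts by direct inspection. Your observation that the four types yield the distinct values $e=21,0,45,9$ gives the symmetry for free and is worth keeping.

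For the enumeration, your first-moment relations are set up correctly and do leave exactly a two-parameter ambiguity, as you say. The proposed resolution, however, has two issues. First, it is logically forward: you invoke $\Stab(D)\cong\AAAA_7$, which in the paper is Theorem~\ref{thm:T7} and is established \emph{after} Proposition~\ref{prop:symtypeT}, again by computation. Second, your ``expected'' orbit sizes $7,42,15,15,70$ are wrong: they sum to $144$, not $149$, and an orbit of size $70$ would force a stabilizer of order $36$, which does not occur. In fact $|\Stab(D)\cap\Stab(D\sprime)|=72$ for the type $\alpha^{3}\gamma^{18}$ (this is $\Sigma_c$ in Theorem~\ref{thm:A7}), so the orbit size is $2520/72=35$ and the $70$ components of that type split into \emph{two} $\Stab(D)$-orbits; the correct list is $7,15,15,35,35,42$. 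With that correction your strategy of checking one representative per orbit would work, but note that it is still a computation --- just a smaller one than the paper's brute-force tabulation of all $150\cdot149$ pairs. The paper makes no attempt at a conceptual count here; it treats Proposition~\ref{prop:symtypeT} purely as a machine verification from the matrix $\Tmat$.
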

Our main results are as follows.
\begin{theorem}\label{thm:mainHfSg}
Let $H$ be the graph whose set of vertices is $\DDD$,
and whose set of edges  is the set of pairs $\{D, D\sprime\}$
such that $D\ne D\sprime$ and $T(D, D\sprime)=\alpha^{15}\gamma^{6}$.
Then $H$ has exactly three connected components,
and each connected component is the Hoffman-Singleton graph.
\end{theorem}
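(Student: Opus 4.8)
The plan is to realise $H$ as a disjoint union of three copies of a strongly regular graph with parameters $(50,7,0,1)$ and then to quote the uniqueness of the Hoffman-Singleton graph recalled in the introduction. The soft part is immediate: Proposition~\ref{prop:symtypeT} shows that $T$ is symmetric, so that $H$ is a genuine undirected graph, and that each vertex has exactly seven neighbours; together with $|\DDD|=150$ from Proposition~\ref{prop:G} this makes $H$ a $7$-regular graph on $150$ vertices.

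All of the conditions defining $\graphG$ and the types $T(D,D\sprime)$ are invariant under $\Aut(\Gamma)=\PGU_3(\F_{25})$, so this group acts on $H$ by automorphisms; since it is transitive on $\QQQ$ and each conic lies in a unique vertex of $H$, it is transitive on $\DDD$. Hence every vertex-stabiliser has order $378000/150=2520$, and one checks from the known maximal subgroups that it is isomorphic to $A_7$. As $A_7$ is simple it maps trivially to $\PGU_3(\F_{25})/\PSU_3(\F_{25})\cong\Z/3$ and so lies in $\PSU_3(\F_{25})$; thus $\PSU_3(\F_{25})$ acts on $\DDD$ with point-stabiliser $A_7$ of index $126000/2520=50$, producing exactly three orbits of size $50$ that the quotient $\Z/3$ permutes cyclically.

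The key step is that no edge of $H$ joins two distinct $\PSU_3(\F_{25})$-orbits. The number of vertices fixed by a stabiliser $A_7$ equals $[\,N_{\PGU_3(\F_{25})}(A_7):A_7\,]$, which divides $3$ because $A_7$ is self-normalising in the simple group $\PSU_3(\F_{25})$; in particular it is at most $3$, so $A_7$ cannot fix all seven neighbours of its vertex $D$. Its action on those seven neighbours is therefore nontrivial, and---$A_7$ being simple with minimal faithful permutation degree $7$---this action is the natural, hence $2$-transitive, one. Consequently $\PGU_3(\F_{25})$ is transitive on the directed edges of $H$, so the level difference in $\Z/3$ between the endpoints of an edge is a single well-defined value $\delta$; the symmetry of $T$ gives $\delta=-\delta$, whence $\delta=0$. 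Therefore every edge stays inside one $\PSU_3(\F_{25})$-orbit, and $H$ is the disjoint union of the three induced $7$-regular graphs carried by the orbits.

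It remains to identify each orbit-graph with the Hoffman-Singleton graph. I would show that the action of $\PSU_3(\F_{25})$ on a $50$-point orbit is the primitive rank-$3$ action on the cosets of the maximal subgroup $A_7$, with subdegrees $1,7,42$; a rank-$3$ group has a connected strongly regular orbital graph, and the degree-$7$ orbital is precisely $H$ on that orbit, so each component is a connected strongly regular graph of type $(50,7,\lambda,\mu)$. Here $\lambda=0$ is already visible---the $2$-transitivity above forces the seven neighbours of a vertex to span a complete or an empty graph, and a complete one would make the component a $K_8$---and then the identity $k(k-\lambda-1)=(v-k-1)\mu$ forces $\mu=1$. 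By the cited uniqueness each component is the Hoffman-Singleton graph, and connectedness shows there are exactly three of them. The main obstacle is the verification of the subdegree $42$, that is, that $A_7$ is transitive on the non-neighbours inside its orbit: this is the one point the symmetry arguments do not reach, and it must be settled either by analysing the intersection patterns of the two partitions of $\PPP$ into $21$ hexads attached to a pair of vertices (via Propositions~\ref{prop:geomD}--\ref{prop:typeT}) or by a direct computation in the explicit model of $\Gamma_5$.
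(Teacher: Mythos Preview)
The paper's proof (Section~\ref{sec:proof}) is a direct machine computation: from explicit lists of points, lines and conics it assembles the matrix $\Tmat=(T(D_i,D_j))$, reads off the adjacency matrix of $H$, and simply checks that there are three components, each strongly regular with parameters $(50,7,0,1)$. No group theory enters.

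Your route is genuinely different and more explanatory: the three components appear as $\PSU_3(\F_{25})$-orbits, the $\delta=-\delta$ argument neatly confines edges to a single orbit, and $\lambda=0$ drops out of the $2$-transitivity of $\AAAA_7$ on the seven neighbours. These steps are correct. Two points, however, need attention. First, the assertion $\stab(D)\cong\AAAA_7$ is itself the content of Theorem~\ref{thm:T7}, proved in the paper by computation; your ``one checks from the known maximal subgroups'' is not yet a proof, since you must rule out subgroups of order $2520$ in $\PGU_3(\F_{25})$ that do not lie in $\PSU_3(\F_{25})$ (and several later steps---self-normalisation, minimal faithful degree $7$---depend on this identification). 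Second, and you correctly flag this, the identity $k(k-\lambda-1)=(v-k-1)\mu$ presupposes that $\mu$ is constant, i.e.\ that the action has rank $3$; your argument so far yields only a connected, arc-transitive, triangle-free $7$-regular graph on $50$ vertices, and the counting $\sum\mu=42$ over $42$ non-neighbours gives the \emph{average} $\mu=1$, not each value. The rank-$3$ property of $\PSU_3(\F_{25})$ on the $50$ cosets of a maximal $\AAAA_7$ is classical (indeed it is one of the standard descriptions of the Hoffman--Singleton graph), so a citation would close the gap; without it the argument is incomplete exactly where you say it is. The paper bypasses both issues by brute-force verification.
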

We denote by $\CCC_1, \CCC_2, \CCC_3$ the set of vertices of the connected components of $H$.
We have $|\CCC_1|=|\CCC_2|=|\CCC_3|=50$ and $\CCC_1\cup  \CCC_2\cup \CCC_3=\DDD$.
\begin{proposition}\label{prop:TDD}
If $D$ and  $D\sprime$ are in the same connected component of $H$,
then $T(D, D\sprime)$ is either $\gamma^{21}$ or  $\alpha^{15}\gamma^{6}$.
If $D$ and  $D\sprime$ are in different connected components of $H$,
then $T(D, D\sprime)$ is either $\beta^{21}$ or  $\alpha^{3}\gamma^{18}$.
\end{proposition}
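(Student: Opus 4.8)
The plan is to read the statement off from Theorem~\ref{thm:mainHfSg} and the counts of Proposition~\ref{prop:symtypeT}, using the group action to collapse everything to a single orbit. The basic observation is that the type function $T(D,D\sprime)$ is invariant under $\Aut(\Gamma)=\PGU_3(\F_{25})$, since it is defined solely through the intersection numbers $|Q\cap Q\sprime\cap\Gamma|$ of conics, which are preserved by projective automorphisms of $\Gamma$. First I would determine how $\Aut(\Gamma)$ permutes the three Hoffman-Singleton components $\CCC_1,\CCC_2,\CCC_3$. As $\Aut(\Gamma)$ acts transitively on $\QQQ$ and the whole construction of $\graphG$, $\DDD$ and $H$ is geometric, $\Aut(\Gamma)$ acts transitively on $\DDD$ and preserves $H$, hence permutes $\{\CCC_1,\CCC_2,\CCC_3\}$ transitively. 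Because $\PSU_3(\F_{25})$ is normal in $\PGU_3(\F_{25})$ with cyclic quotient of order $3$, this action on three blocks factors through $\Z/3\Z$ with kernel exactly $\PSU_3(\F_{25})$. Thus $\PSU_3(\F_{25})$ fixes each $\CCC_i$ and acts on it by the rank-$3$ permutation action with subdegrees $1,7,42$ underlying the Hoffman-Singleton graph; the stabilizer $\Stab(D)$ of a vertex $D$ has order $378000/150=2520$ and is contained in $\PSU_3(\F_{25})$, since it fixes the component containing $D$ while the induced $\Z/3\Z$-action permutes the three components without fixing any.

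Next I would fix $D\in\CCC_1$ and analyse its partners by distance. By invariance of $T$, the value $T(D,D\sprime)$ for $D\sprime\in\CCC_1\setminus\{D\}$ depends only on the $\Stab(D)$-orbit of $D\sprime$, i.e.\ on the distance from $D$ to $D\sprime$ in the Hoffman-Singleton graph $\CCC_1$. The $7$ neighbours are precisely the edges of $H$ at $D$, so they have type $\alpha^{15}\gamma^6$ by the definition of $H$. The $42$ vertices at distance $2$ constitute the single remaining nontrivial suborbit, so they all share one common type $\tau_0$, which by the definition of $H$ satisfies $\tau_0\ne\alpha^{15}\gamma^6$. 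Proposition~\ref{prop:symtypeT} provides only $30$ partners of type $\beta^{21}$, fewer than $42$, so $\tau_0\ne\beta^{21}$; hence $\tau_0\in\{\gamma^{21},\alpha^3\gamma^{18}\}$.

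The crux, and the step I expect to be hardest, is to exclude $\tau_0=\alpha^3\gamma^{18}$. The counts of Proposition~\ref{prop:symtypeT} alone do not decide this: if $\tau_0=\alpha^3\gamma^{18}$ one still finds $7+42=49$ same-component partners and $30+42+28=100$ partners in $\CCC_2\cup\CCC_3$, which is numerically consistent. I would resolve it by computing $\tau_0$ on one explicit same-component distance-$2$ pair $(D,D\sprime)$. This is legitimate because, by the previous paragraph, all such pairs form a single $\Aut(\Gamma)$-orbit of ordered pairs, so the type of any representative equals $\tau_0$. Evaluating the intersection numbers $|Q\cap Q\sprime\cap\Gamma|$ for $Q\in D$ and $Q\sprime\in D\sprime$ on such a representative gives $\tau_0=\gamma^{21}$; equivalently, one argues geometrically that two configurations joined by a common $H$-neighbour share no special secant line, so that every $Q\in D$ meets $D\sprime$ only in type $\gamma$, forcing $\gamma^{21}$.

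Finally, granting $\tau_0=\gamma^{21}$, the proposition follows by counting. Since there are exactly $42$ partners of type $\gamma^{21}$ by Proposition~\ref{prop:symtypeT}, and the $42$ distance-$2$ vertices in $\CCC_1$ already realise all of them, every $\gamma^{21}$-partner of $D$ lies in $\CCC_1$. Together with the $7$ neighbours of type $\alpha^{15}\gamma^6$ this exhausts the $49$ partners of $D$ inside $\CCC_1$, proving the first assertion. The remaining $30+70=100$ partners, of types $\beta^{21}$ and $\alpha^3\gamma^{18}$, must therefore lie in $\CCC_2\cup\CCC_3$, which is the second assertion.
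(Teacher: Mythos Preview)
Your argument is essentially correct, but it takes a very different route from the paper. In the paper, Proposition~\ref{prop:TDD} is simply \emph{verified by direct computation}: once the $150\times 150$ matrix $\Tmat$ of types and the three sets $\CCC_1,\CCC_2,\CCC_3$ have been computed, one just reads off which types occur within and between components. There is no group theory or counting involved.

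Your approach, by contrast, is structural. You use the transitivity of $\PGU_3(\F_{25})$ on $\DDD$ and the simplicity of $\PSU_3(\F_{25})$ (which is what really forces the kernel of the action on $\{\CCC_1,\CCC_2,\CCC_3\}$ to contain $\PSU_3$; your ``cyclic quotient of order $3$'' sentence needs this extra word) to reduce the in-component analysis to the known rank-$3$ action of $\PSU_3(\F_{25})$ on the Hoffman--Singleton graph. The counting against Proposition~\ref{prop:symtypeT} then pins down the neighbour type and rules out $\beta^{21}$ for the distance-$2$ suborbit. The one genuine computation you cannot avoid is deciding between $\gamma^{21}$ and $\alpha^{3}\gamma^{18}$ for that suborbit; you are right that the numerics alone do not settle it, and your proposed ``no common special secant line'' argument is not substantiated anywhere in the paper, so in practice you must check one explicit pair, which is a small piece of the paper's global computation. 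After that, your final counting step (the $42$ partners of type $\gamma^{21}$ and the $7$ of type $\alpha^{15}\gamma^{6}$ exactly fill $\CCC_1\setminus\{D\}$) is clean and correct.

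What your approach buys is an explanation: it shows that Proposition~\ref{prop:TDD} is forced by symmetry plus the single numerical coincidence $42=42$ in Proposition~\ref{prop:symtypeT}, together with one local check. The paper's approach buys certainty with no appeal to outside facts (rank-$3$ action, simplicity of $\PSU_3$), at the cost of being opaque.
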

\begin{theorem}\label{thm:mainHgSm}
Let $H\sprime$ be the graph whose set of vertices is $\DDD$,
and whose set of edges  is the set of pairs $\{D, D\sprime\}$
such that $D\ne D\sprime$ and $T(D, D\sprime)$ is either $\beta^{21}$ or  $\alpha^{15}\gamma^{6}$.
Then $H\sprime$ is a connected regular graph of valency $37$.
For any $i$ and $j$ with $i\ne j$, the restriction $H\sprime|(\CCC_i\cup \CCC_j)$ of $H\sprime$ to $\CCC_i\cup \CCC_j$ is
the Higman-Sims graph.
\end{theorem}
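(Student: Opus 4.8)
The plan is to analyze $H\sprime$ through the decomposition of its edge set provided by Proposition~\ref{prop:TDD} together with the partition $\DDD=\CCC_1\cup\CCC_2\cup\CCC_3$ from Theorem~\ref{thm:mainHfSg}. The regularity of valency $37$ is immediate: by Proposition~\ref{prop:symtypeT} every $D\in\DDD$ has exactly $30$ partners with $T(D,D\sprime)=\beta^{21}$ and exactly $7$ with $T(D,D\sprime)=\alpha^{15}\gamma^{6}$, and these are precisely its neighbours in $H\sprime$, so the valency is $30+7=37$ at every vertex. To locate these edges, I would use Proposition~\ref{prop:TDD}: if $D,D\sprime$ lie in the same $\CCC_i$ then $T(D,D\sprime)\in\{\gamma^{21},\alpha^{15}\gamma^{6}\}$, whereas if they lie in different components then $T(D,D\sprime)\in\{\beta^{21},\alpha^{3}\gamma^{18}\}$. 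Comparing with the edge set $\{\beta^{21},\alpha^{15}\gamma^{6}\}$ of $H\sprime$, the $H\sprime$-edges inside a single $\CCC_i$ are exactly the $\alpha^{15}\gamma^{6}$-edges, that is, the edges of the Hoffman-Singleton graph $H|\CCC_i$, while the $H\sprime$-edges between two components are exactly the $\beta^{21}$-edges.

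The second step is to compute the valency of $H\sprime|(\CCC_i\cup\CCC_j)$ and to reduce to a single pair. The simple group $\PSU_3(\F_{25})$ of index $3$ in $\Aut(\Gamma)$ is the kernel of the cyclic action of $\Aut(\Gamma)$ on $\{\CCC_1,\CCC_2,\CCC_3\}$; hence it preserves each $\CCC_i$, and it acts on the $50$ vertices of $\CCC_i$ transitively with point-stabiliser a copy of $A_7$ (a maximal subgroup of $\PSU_3(\F_{25})$ of index $50$). An element of $\Aut(\Gamma)$ of order $3$ outside $\PSU_3(\F_{25})$ cycles the three components and preserves the invariantly defined type $T$, hence permutes the $\beta^{21}$-edges; since each such edge joins two distinct components, the $150\cdot 30/2=2250$ $\beta^{21}$-edges are distributed evenly, $750$ to each pair $\{\CCC_i,\CCC_j\}$. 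Transitivity of $\PSU_3(\F_{25})$ on $\CCC_i$ then forces each vertex of $\CCC_i$ to have exactly $750/50=15$ neighbours in $\CCC_j$, and adding the $7$ Hoffman-Singleton neighbours inside $\CCC_i$ shows $H\sprime|(\CCC_i\cup\CCC_j)$ is regular of valency $22$ on $|\CCC_i|+|\CCC_j|=100$ vertices. The same order-$3$ element carries one union $\CCC_i\cup\CCC_j$ to another while preserving $H\sprime$, so all three restrictions are isomorphic and it suffices to treat one pair, say $\{\CCC_1,\CCC_2\}$.

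The heart of the argument is to show that this $22$-regular graph on $100$ vertices is strongly regular with $\lambda=0$ and $\mu=6$. Using transitivity of $\PSU_3(\F_{25})$ on $\CCC_1$ and on $\CCC_2$ (stabiliser $A_7$), the verification of $\lambda$ and $\mu$ reduces to checking common-neighbour counts on the finitely many $A_7$-orbits of vertex pairs. I would first establish $\lambda=0$: the induced graphs on $\CCC_1$ and on $\CCC_2$ are Hoffman-Singleton graphs and hence triangle-free, so the only triangles to exclude are the \emph{mixed} ones, formed either by a $\beta^{21}$-edge across the two components together with an $\alpha^{15}\gamma^{6}$-edge inside one of them, or by two $\beta^{21}$-edges meeting at a single vertex on the far side. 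Excluding these amounts to controlling how the across-relation $\beta^{21}$ composes with the Hoffman-Singleton adjacency $\alpha^{15}\gamma^{6}$, and this interaction between the two Hoffman-Singleton halves is the step I expect to be the main obstacle; the natural way to resolve it is to translate $\beta^{21}$ and $\alpha^{15}\gamma^{6}$ back into the intersection data $|Q\cap Q\sprime\cap\Gamma|$ of Propositions~\ref{prop:typet} and~\ref{prop:typeT} and to count inside the Weierstrass set $\PPP$. The same orbit reduction then shows that the number of common neighbours of a non-adjacent pair is constant over all orbits, and the consistency relation $k(k-\lambda-1)=(v-k-1)\mu$ with $(v,k,\lambda)=(100,22,0)$ pins this constant down as $\mu=6$.

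Finally, a strongly regular graph of parameters $(100,22,0,6)$ is the Higman-Sims graph by uniqueness~\cite{MR0304232}, which identifies $H\sprime|(\CCC_i\cup\CCC_j)$ for every pair $i\ne j$. The remaining assertions about $H\sprime$ itself follow at once: regularity of valency $37$ was shown in the first step, and since each restriction $H\sprime|(\CCC_i\cup\CCC_j)$ is connected, while the three such unions cover $\DDD$ and pairwise overlap in a full component $\CCC_i$, the graph $H\sprime$ is connected.
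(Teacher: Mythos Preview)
Your approach diverges from the paper's: the paper proves Theorem~\ref{thm:mainHgSm} by direct computation, reading off the adjacency matrix $A_{H\sprime}$ from the $150\times 150$ type matrix $\Tmat$ and then checking numerically that each restriction $H\sprime|(\CCC_i\cup\CCC_j)$ is strongly regular with parameters $(100,22,0,6)$. There is no structural argument in the paper for $\lambda=0$ or $\mu=6$; these are simply verified from the data, after which uniqueness of the Higman--Sims graph finishes the proof.

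Your route is more conceptual, and the parts you actually carry out are sound: valency $37$ from Proposition~\ref{prop:symtypeT}, the localisation of edges via Proposition~\ref{prop:TDD}, the equidistribution of $\beta^{21}$-edges over the three pairs by the outer $\Z/3\Z$, and the reduction to a single pair. The argument that $\PSU_3(\F_{25})$ fixes each $\CCC_i$ can be justified (it is simple and the component action factors through $\SSSS_3$), though in the paper this fact only appears as a computational remark in Section~\ref{sec:proof2}. However, the decisive step---establishing $\lambda=0$ and the constancy of $\mu$---is not proved in your proposal. You correctly isolate the mixed triangles as the obstruction, but ``translate back into intersection data and count inside $\PPP$'' is a plan, not an argument, and you yourself flag it as the main obstacle. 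Likewise, orbit reduction only tells you that the common-neighbour count is constant \emph{within} each $\AAAA_7$-orbit of non-adjacent pairs; it does not by itself show the count agrees \emph{across} the several such orbits, so you cannot yet invoke $k(k-\lambda-1)=(v-k-1)\mu$ to pin down $\mu$. Absent a genuine combinatorial or representation-theoretic mechanism forcing these equalities, your argument collapses back to the same finite check the paper performs on $\Tmat$.
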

%
%
The number of $15$-cocliques
in the Hoffman-Singleton graph is  $100$.
Connecting two distinct $15$-cocliques when  they have $0$ or $8$ common vertices,
we obtain the Higman-Sims graph.
Starting from  Robertson's  pentagon-pentagram construction~\cite{RobertsonPhDthesis}
(see also~\cite{MR2002216})
of the Hoffman-Singleton graph and using this $15$-coclique method,
Hafner~\cite{MR2114181} gave an elementary construction of  the Higman-Sims graph.
\par
\smallskip
Our construction 
is related to this construction
via the following:
\begin{proposition}\label{prop:coqliques}
Suppose that $i\ne j\ne k \ne i$.
Then the map
$$g_k\;\;:\;\; D\in \CCC_i \cup \CCC_j  \;\; \mapsto\;\; \set{D\sprime\in \CCC_k}{T(D, D\sprime)=\beta^{21}}
$$
induces a bijection from  $\CCC_i \cup \CCC_j$  to the set of $15$-cocliques 
in the Hoffman-Singleton graph $H|\CCC_k$.
For distinct $D, D\sprime\in \CCC_i \cup \CCC_j$, we have
$$
|g_k(D) \cap g_k(D\sprime)|=
\begin{cases}
0 & \textrm{if $T(D, D\sprime)=\alpha^{15}\gamma^6$},\\
3 & \textrm{if $T(D, D\sprime)=\alpha^{3}\gamma^{18}$},\\
5 & \textrm{if $T(D, D\sprime)=\gamma^{21}$},\\
8 & \textrm{if $T(D, D\sprime)=\beta^{21}$}.\\
\end{cases}
$$
\end{proposition}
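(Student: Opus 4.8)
The plan is to show that $g_k$ lands in the set of $15$-cocliques of the Hoffman-Singleton graph $H|\CCC_k$, that it is injective, and then to read off the four intersection numbers; since $|\CCC_i\cup\CCC_j|=100$ equals the number of $15$-cocliques of a Hoffman-Singleton graph, surjectivity and hence the asserted bijection come for free. Throughout I would work with the three Higman-Sims graphs $H\sprime|(\CCC_a\cup\CCC_b)$ furnished by Theorem~\ref{thm:mainHgSm}, together with the dictionary of Proposition~\ref{prop:TDD}: the $H\sprime$-edges are exactly the pairs of type $\beta^{21}$ or $\alpha^{15}\gamma^{6}$, and for $D\in\CCC_a$ one has $g_a(D)=\set{D\spprime\in\CCC_k}{T(D,D\spprime)=\beta^{21}}=N_{H\sprime}(D)\cap\CCC_k$ computed inside the Higman-Sims graph $H\sprime|(\CCC_a\cup\CCC_k)$. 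The relevant group facts are that $\PSU_3(\F_{25})\subset\Aut(\Gamma)$ of index $3$ preserves all types, fixes each $\CCC_a$ setwise (being simple, it maps trivially to the $3$-cycle quotient $\Aut(\Gamma)/\PSU_3\cong\Z/3$ permuting the components), and acts transitively on each $\CCC_a$ with point-stabilizer $A_7$.

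First I would fix $|g_k(D)|=15$. By Proposition~\ref{prop:TDD} every $\beta^{21}$-partner of a given $D\in\CCC_i$ lies in $\CCC_j\cup\CCC_k$, and by Proposition~\ref{prop:symtypeT} there are $30$ of them. Using the $3$-cycle symmetry, the total number of $\beta^{21}$-pairs between two components is the same for all three unordered pairs, while transitivity of $\PSU_3$ on each component makes the per-vertex partner-count constant; hence $D$ has as many $\beta^{21}$-partners in $\CCC_j$ as in $\CCC_k$, namely $15$ each. The coclique property is then immediate: if $D_1,D_2\in g_k(D)\subset\CCC_k$ were $H$-adjacent, then together with $D$ they would form a triangle in the Higman-Sims graph $H\sprime|(\CCC_i\cup\CCC_k)$, contradicting $\lambda=0$; so Proposition~\ref{prop:TDD} gives $T(D_1,D_2)=\gamma^{21}$, and $g_k(D)$ is a $15$-coclique.

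Next I would compute $\iota(D,D\sprime):=|g_k(D)\cap g_k(D\sprime)|$ as a common-neighbour count inside a Higman-Sims graph. For $D,D\sprime$ in the same component $\CCC_i$ this is painless: if $T(D,D\sprime)=\alpha^{15}\gamma^{6}$ they are $H\sprime$-adjacent, so $\lambda=0$ forces $\iota=0$; if $T(D,D\sprime)=\gamma^{21}$ they are non-adjacent, so they have $\mu=6$ common neighbours in $H\sprime|(\CCC_i\cup\CCC_k)$, exactly one of which lies in $\CCC_i$ (the unique common neighbour of the non-adjacent pair $D,D\sprime$ in the inner Hoffman-Singleton graph $H|\CCC_i$, where $\mu=1$), leaving $\iota=5$ in $\CCC_k$. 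Same-component injectivity drops out too, since two distinct vertices of $\CCC_i$ share at most $6<15$ Higman-Sims neighbours. The delicate case is $D\in\CCC_i$, $D\sprime\in\CCC_j$, where all three components intervene and no single Higman-Sims subgraph contains $D$, $D\sprime$, and a common $\CCC_k$-neighbour simultaneously; I expect this to be the main obstacle. The approach I favour is to prove that $\iota$ is constant on each of the two cross-types, that is, that $\PSU_3(\F_{25})$ acts transitively on the ordered cross-pairs of type $\beta^{21}$ and on those of type $\alpha^{3}\gamma^{18}$ separately (a permutation-rank statement about the action on $\DDD$). Granting this, writing $p,q$ for the two constant values and double counting $\sum_{D\sprime\in\CCC_j}\iota(D,D\sprime)=\sum_{X\in g_k(D)}15=225$ yields $15p+35q=225$, i.e. $3p+7q=45$, whose only solution with $p,q$ among the admissible coclique-intersection numbers $0,3,5,8$ is $p=8$ for $\beta^{21}$ and $q=3$ for $\alpha^{3}\gamma^{18}$. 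Failing the rank statement, one computes $p$ and $q$ directly by counting conics simultaneously cutting out two prescribed $\beta^{21}$-configurations of Weierstrass points.

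Finally I would assemble the conclusion: all four values $0,3,5,8$ are established and none equals $15$, so $g_k$ is injective, hence a bijection onto the $100$ cocliques, and the correspondence $0\leftrightarrow\alpha^{15}\gamma^{6}$, $8\leftrightarrow\beta^{21}$ identifies the $H\sprime$-edges with Hafner's adjacency rule ``meet in $0$ or $8$'', recovering his construction and confirming Theorem~\ref{thm:mainHgSm}.
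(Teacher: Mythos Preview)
Your approach is quite different from the paper's, which simply reads the statement off from the computed $150\times 150$ type matrix $\Tmat$: ``Using $\Tmat$, the adjacency matrix of $H$ and the sets $\CCC_1,\CCC_2,\CCC_3$, we confirm Proposition~1.9.'' So everything you do conceptually is extra, and the parts that work are genuinely nicer than a bare computer check. In particular, your derivation of $|g_k(D)|=15$ from the $3$-cycle symmetry and Proposition~1.7, the coclique property from $\lambda=0$ in the Higman--Sims graph, and the same-component values $0$ and $5$ from the $(\lambda,\mu)$ parameters of Higman--Sims and Hoffman--Singleton are all correct and clean.

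The cross-component case, however, has a real gap that you yourself flag but do not close. Two issues compound each other. First, the transitivity of $\PSU_3(\F_{25})$ on ordered cross-pairs of a given type is not established; in the paper's logic this would follow from Theorem~1.12 and Proposition~1.13 (the stabilizer $\AAAA_7$ acts transitively by conjugation on each of $\Conj_{\AAAA}(\Sigma_b)$ and $\Conj_{\AAAA}(\Sigma_c)$), but those results are themselves proved by the same computer verification, so invoking them is circular relative to avoiding computation. Second, even granting constancy, the equation $3p+7q=45$ alone does not determine $(p,q)$: besides $(8,3)$ it admits $(15,0)$ and $(1,6)$ in nonnegative integers, and your restriction to the set $\{0,3,5,8\}$ of ``admissible coclique-intersection numbers'' is an external fact about $15$-cocliques in the Hoffman--Singleton graph that you have not proved or cited. (A second-moment count, using that any two vertices of the coclique $g_k(D)$ have exactly $5$ common $\beta^{21}$-partners in $\CCC_j$, yields $\sum_{D'\in\CCC_j}\iota(D,D')^2=1275$; combined with the first moment this still leaves both $(p,q)=(8,3)$ and $(1,6)$ on the table, so a third constraint is genuinely needed.) Your fallback, ``compute $p$ and $q$ directly by counting conics,'' is of course just the paper's method in disguise. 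So as written, the proposal reduces the computational content substantially but does not eliminate it for the values $3$ and $8$.
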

Let $\EEE_1$ denote the set of edges of the Hoffman-Singleton graph $H|\CCC_1$; that is, 
$$
\EEE_1:=\set{\{D_1, D_2\} }{D_1, D_2 \in \CCC_1, \;\; T(D_1, D_2)=\alpha^{15}\gamma^6}.
$$
We define a symmetric relation $\sim$ on $\EEE_1$ by 
$\{D_1, D_2\} \sim \{D_1\sprime, D_2\sprime\}$
if and only if 
$\{D_1, D_2\}$ and $ \{D_1\sprime, D_2\sprime\}$ are disjoint and
there exists an edge $\{D_1\spprime, D_2\spprime\}\in \EEE_1$
that has a common vertex  with each of the edges $\{D_1, D_2\}$ and $ \{D_1\sprime, D_2\sprime\}$.
By Haemers~\cite{MR627493},
the graph  $(\EEE_1, \sim)$ is a  strongly regular graph of parameters 
$(v, k, \lambda, \mu)=(175, 72, 20, 36)$.
\par
\smallskip
Combining our results with the construction 
of the McLaughlin graph 
due to Inoue~\cite{MR2917927}, 
we obtain the following:
\begin{theorem}\label{thm:mainMcL}
Let $H\spprime$ be the graph whose set of vertices
is $\EEE_1\cup \CCC_2\cup \CCC_3$, and 
whose set of edges consists of 
\begin{itemize}
\item 
$\{E, E\sprime\}$, where $E, E\sprime\in \EEE_1$ are distinct and satisfy 
 $E \sim E\sprime$,
\item 
$\{E, D\}$, where $E=\{D_1, D_2\}\in \EEE_1$, $D\in  \CCC_2\cup \CCC_3$,
and both of  $T(D_1, D)$ and  $T(D_2, D)$ are $\alpha^3\gamma^{18}$, and 
\item $\{D, D\sprime\}$, where $D, D\sprime\in  \CCC_2\cup \CCC_3$ are distinct and satisfy 
and $T(D, D\sprime)=\alpha^{15}\gamma^6$ or $\alpha^3\gamma^{18}$.
\end{itemize}
Then $H\spprime$ is the McLaughlin graph.
\end{theorem}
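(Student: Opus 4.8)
The plan is to identify $H\spprime$ with the graph that Inoue~\cite{MR2917927} builds from a single Hoffman-Singleton graph, and then to invoke Inoue's theorem that the latter is the McLaughlin graph; the real work is to translate the three adjacency rules of the statement into purely combinatorial conditions on $H|\CCC_1$. First I would record the vertex count. Since $H|\CCC_1$ is the Hoffman-Singleton graph, it is $7$-regular on $50$ vertices, so $|\EEE_1|=50\cdot 7/2=175$, and together with $|\CCC_2|=|\CCC_3|=50$ the total number of vertices of $H\spprime$ is $175+50+50=275$, matching the McLaughlin graph.

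The key device is the bijection $g_1\colon\CCC_2\cup\CCC_3\to\{\text{$15$-cocliques of }H|\CCC_1\}$ furnished by Proposition~\ref{prop:coqliques} (with $i=2$, $j=3$, $k=1$). Using $g_1$ I would identify each vertex of $H\spprime$ lying in $\CCC_2\cup\CCC_3$ with a $15$-coclique of $H|\CCC_1$, while each vertex in $\EEE_1$ is by definition an edge of $H|\CCC_1$, and then rewrite the three adjacency rules. The first rule is unchanged: it is the Haemers relation $\sim$ on $\EEE_1$, so $H\spprime|\EEE_1$ is the graph $(\EEE_1,\sim)$, which by Haemers~\cite{MR627493} is strongly regular of parameters $(175,72,20,36)$. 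For the second rule, note that $D_1,D_2\in\CCC_1$ and $D\in\CCC_2\cup\CCC_3$ lie in different connected components of $H$, so by Proposition~\ref{prop:TDD} each type $T(D_i,D)$ is either $\beta^{21}$ or $\alpha^3\gamma^{18}$; since $g_1(D)=\set{D\sprime\in\CCC_1}{T(D,D\sprime)=\beta^{21}}$, the condition $T(D_i,D)=\alpha^3\gamma^{18}$ is exactly $D_i\notin g_1(D)$. Thus the second rule reads: the edge $\{D_1,D_2\}$ is adjacent to the coclique $g_1(D)$ precisely when both endpoints lie outside $g_1(D)$. For the third rule, the intersection table of Proposition~\ref{prop:coqliques} shows that $T(D,D\sprime)=\alpha^{15}\gamma^6$ (resp. $\alpha^3\gamma^{18}$) is equivalent to $|g_1(D)\cap g_1(D\sprime)|=0$ (resp. $3$), so two cocliques are adjacent precisely when they meet in $0$ or $3$ vertices.

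Having rewritten $H\spprime$ entirely in terms of edges and $15$-cocliques of the single Hoffman-Singleton graph $H|\CCC_1$, I would then check that this is verbatim, or up to an obvious relabelling, Inoue's construction~\cite{MR2917927}, whence the conclusion. As a consistency check supporting this identification, and which I would perform first, I would compute the valency of each vertex type. An edge-vertex has $72$ neighbors in $(\EEE_1,\sim)$ and, since no $15$-coclique contains two adjacent vertices, exactly $100-30-30=40$ coclique-neighbors (each vertex lies in $100\cdot 15/50=30$ cocliques), for a total of $112$. A coclique-vertex $C$ has $175-15\cdot 7=70$ edge-neighbors (edges with both endpoints outside $C$) and, using the distribution of the four types from a fixed $D$ (namely $7$ of type $\alpha^{15}\gamma^6$, $42$ of type $\gamma^{21}$, $15$ of type $\beta^{21}$, $35$ of type $\alpha^3\gamma^{18}$, consistent with Theorem~\ref{thm:mainHgSm}), exactly $7+35=42$ coclique-neighbors meeting $C$ in $0$ or $3$ vertices, again totalling $112$; both agree with the valency of the McLaughlin graph.

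The main obstacle is the faithful comparison with Inoue's construction: one must verify that the three rewritten rules agree with Inoue's adjacency rules under a common labelling of the edges and $15$-cocliques of the Hoffman-Singleton graph, and in particular that the edge-coclique incidences, which couple the edge structure of $H|\CCC_1$ with its coclique structure, coincide. Should a direct match prove awkward, the fallback is to verify the strong-regularity parameters $(275,112,30,56)$ of $H\spprime$ directly and invoke the uniqueness of the McLaughlin graph~\cite{MR0384597}; this reduces to counting common neighbors for adjacent and for non-adjacent pairs in each of the three vertex-type combinations (edge/edge, edge/coclique, coclique/coclique), a finite but intricate case analysis that can be organized using Propositions~\ref{prop:typet}, \ref{prop:typeT}, \ref{prop:symtypeT}, \ref{prop:TDD} together with the intersection table of Proposition~\ref{prop:coqliques}.
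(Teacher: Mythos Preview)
Your proposal is correct and follows essentially the same route as the paper: after establishing Proposition~\ref{prop:coqliques}, the paper simply states that Theorem~\ref{thm:mainMcL} ``follows from~\cite{MR2917927}, or we can compute the adjacency matrix of $H\spprime$ and confirm Theorem~\ref{thm:mainMcL} directly,'' which is exactly your plan of translating the three adjacency rules into conditions on edges and $15$-cocliques of $H|\CCC_1$ via $g_1$ and then invoking Inoue, with direct verification of the strongly regular parameters as fallback. Your detailed valency checks and the case split of Proposition~\ref{prop:TDD} in handling rules two and three are sound elaborations of what the paper leaves implicit.
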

%
Since each vertex $D\in \DDD$ of $H$ and $H\sprime$ is  not a single point 
but a rather complicated geometric object (a collection of $21$ conics),
we can describe  
edges  of $H$ and $H\sprime$ by various geometric properties  other than $T(D, D\sprime)$.
Or conversely,
we can find interesting configurations of conics and lines 
from the graphs $H$ and $H\sprime$.
In Section~\ref{sec:other}, we present a few examples.
%
\begin{remark}
The graph $H\sprime$  of $150$ vertices has been constructed in~\cite{MR782310} and~\cite{MR2114181}.
\end{remark}
\subsection{Group-theoretic construction}\label{subsec:groupconst}
The automorphism group $\PGU_{3}(\F_{25})$
of the Fermat curve $\Gamma_5$ of degree $6$ in characteristic $5$
acts transitively on the sets $\QQQ$ and $\DDD$ of vertices of the graphs $\graphG$ and $H$ or $H\sprime$.
Using this fact, we can define the edges of $\graphG$, $H$ and $H\sprime$
by means of the structure of stabilizer subgroups in $\PGU_{3}(\F_{25})$.
For an element $x$ of a set $X$  on which $\PGU_{3}(\F_{25})$ acts,
we denote by $\stab(x)$  the stabilizer subgroup in $\PGU_{3}(\F_{25})$ of $x$.
By $\SSSS_m$ and $\AAAA_m$, we denote the symmetric group and the alternating group of degree $m$,
respectively.
\par
\smallskip
Let $Q$ be an element of $\QQQ$.
Then $\stab(Q)$ is isomorphic to $\PGL_2(\F_5)\cong\SSSS_5$
(see~\cite[n.~81]{MR0213949},~\cite{MR3092762} or   Proposition~\ref{prop:Q}).
A rather mysterious definition of the graph $\graphG$ in Proposition~\ref{prop:G}
can be  replaced by the following:
\begin{theorem}\label{thm:S5}
Let $Q$ and $Q\sprime$ be distinct elements of $\QQQ$.
Then $Q$ and $Q\sprime$ are adjacent in the graph $\graphG$ if and only if 
$\stab(Q) \cap \stab (Q\sprime)$ is isomorphic to $\AAAA_4$. 
Moreover,  $Q$ and $Q\sprime$ are in the same connected component of  $\graphG$ if and only if 
the subgroup $\gen{\stab(Q), \stab (Q\sprime)}$ of $\PGU_{3}(\F_{25})$ generated  by 
the union of $\stab(Q)$ and $\stab (Q\sprime)$
is isomorphic to $\AAAA_7$.
\end{theorem}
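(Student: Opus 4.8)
The plan is to reduce Theorem~\ref{thm:S5} to a finite group-theoretic computation inside $\PGU_3(\F_{25})$ by exploiting the transitivity of the action on $\QQQ$. Since $\PGU_3(\F_{25})$ acts transitively on $\QQQ$ with $\stab(Q)\cong\SSSS_5$, the adjacency relation and the connected-component relation in $\graphG$ are both invariant under this action. Hence both relations are determined by the double cosets $\stab(Q)\backslash \PGU_3(\F_{25})/\stab(Q\sprime)$, or equivalently by the orbits of $\stab(Q)$ on $\QQQ\setminus\{Q\}$. First I would fix one conic $Q_0$ and enumerate the orbits of $\stab(Q_0)\cong\SSSS_5$ on the remaining $3149$ conics; by Proposition~\ref{prop:G} the component $D\ni Q_0$ contains $21$ conics forming a $T(7)$, so the $20$ neighbours of $Q_0$ within its component and the other conics in $\QQQ$ split into a manageable list of $\SSSS_5$-orbits whose sizes sum to $3149$.

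Next I would verify the first equivalence. For a neighbour $Q\sprime$ of $Q$ in $\graphG$, the subgroup $\stab(Q)\cap\stab(Q\sprime)$ stabilizes the edge $\{Q,Q\sprime\}$, hence acts on the triangular graph $T(7)$ of the shared component $D$. Under the identification of $D$ with $T(7)$ (Proposition~\ref{prop:G}), the stabilizer $\stab(Q)$ acts as the automorphism group of $T(7)$, which is $\SSSS_7$; an edge of $T(7)$ corresponds to a pair of $2$-subsets of $\{1,\dots,7\}$ meeting in one point, i.e.\ an ordered chain $i$--$j$--$k$, whose setwise stabilizer in $\SSSS_7$ fixes the unordered triple $\{i,j,k\}$ with its middle vertex distinguished and permutes the remaining four points freely. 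This stabilizer is $\SSSS_4\times(\text{point})$ intersected appropriately; computing it shows $\stab(Q)\cap\stab(Q\sprime)\cong\AAAA_4$. Conversely, I would show that any $Q\sprime$ with $\stab(Q)\cap\stab(Q\sprime)\cong\AAAA_4$ must be a neighbour, by checking that $\AAAA_4$ occurs as such an intersection for exactly the $20$ conics in the orbit of neighbours and for no conic in any other $\stab(Q)$-orbit. This is where the orbit enumeration from the first step does the real work: one tabulates, for a representative $Q\sprime$ of each orbit, the isomorphism type of $\stab(Q)\cap\stab(Q\sprime)$, and observes that $\AAAA_4$ is realized precisely on the neighbour orbit.

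For the second equivalence, the strategy is to identify the subgroup $\gen{\stab(Q),\stab(Q\sprime)}$ when $Q,Q\sprime$ lie in a common component $D$. Since $\Aut(T(7))\cong\SSSS_7$ acts on $D$ and the stabilizers $\stab(Q)\cong\SSSS_5$ sit inside this $\SSSS_7$ as stabilizers of $2$-subsets, two such $\SSSS_5$'s corresponding to intersecting pairs generate all of $\SSSS_7$; the relevant index-$2$ subgroup giving the even permutations would be pinned down by computing the image of $\gen{\stab(Q),\stab(Q\sprime)}$ in $\Aut(D)$ and checking it lands in $\AAAA_7$. Conversely, for $Q,Q\sprime$ in distinct components I would show the generated subgroup is strictly larger than (or not isomorphic to) $\AAAA_7$, again by running through the orbit list and computing $\gen{\stab(Q),\stab(Q\sprime)}$ for a representative of each inter-component orbit. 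The main obstacle will be this last verification: unlike the intra-component case, where the $T(7)$-geometry furnishes $\SSSS_7$ for free, the inter-component generated subgroups have no such clean combinatorial model, so distinguishing $\AAAA_7$ from the various larger subgroups of $\PGU_3(\F_{25})$ (which contains $\AAAA_7$ as a maximal subgroup) requires either explicit matrix computation or an appeal to the known maximal-subgroup lattice of $\PSU_3(\F_{25})$. I expect to carry out the generator-and-relations check computationally, confirming that $\gen{\stab(Q),\stab(Q\sprime)}\cong\AAAA_7$ exactly on the intra-component orbits and equals a strictly larger subgroup (generically $\PSU_3(\F_{25})$ or $\PGU_3(\F_{25})$) otherwise.
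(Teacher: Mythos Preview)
Your core strategy---fix one conic, enumerate the $\stab(Q_1)$-orbits on $\QQQ\setminus\{Q_1\}$, and for a representative of each orbit compute both $\stab(Q_1)\cap\stab(Q)$ and $\langle\stab(Q_1),\stab(Q)\rangle$---is exactly what the paper does. The paper finds $64$ orbits, tabulates the isomorphism type of each intersection (its Table~\ref{table:orbQs}), checks that $\AAAA_4$ arises precisely on the neighbour orbit, and then verifies that $|\langle\stab(Q_1),\stab(Q)\rangle|=2520$ exactly for $Q\in D_1$ and $>2520$ otherwise.

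Two details in your conceptual scaffolding are wrong, although they do not affect the computational core. First, a vertex of $T(7)$ has valency $2(7-2)=10$, not $20$; the $20$ other conics in $D_1$ split into $10$ neighbours (intersection $\AAAA_4$) and $10$ non-neighbours (intersection $\DDDD_{12}$, one of the orbits in row~6 of Table~\ref{table:orbQs}). Second, your $T(7)$ argument for the forward direction tacitly assumes that $\stab(D)$ acts on $D$ as $\AAAA_7$, but that is Theorem~\ref{thm:T7}, proved \emph{after} Theorem~\ref{thm:S5}; without it you cannot pass from the pointwise stabilizer of two adjacent vertices in $\Aut(T(7))\cong\SSSS_7$, which is $\SSSS_4$, down to $\AAAA_4$. (You also wrote ``setwise stabilizer'' where the pointwise one is meant, and $\SSSS_7$ where $\AAAA_7$ is the relevant group.) The paper avoids this circularity by making no conceptual forward argument at all: it simply computes $\stab(Q_1)\cap\stab(Q)$ directly and compares against the already-known adjacency matrix $A_\graphG$.
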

It is known that,
in the automorphism group of the Hoffmann-Singleton graph,
the stabilizer subgroup of a vertex is isomorphic to $\AAAA_7$
(see, for example, ~\cite[page 34]{MR827219}).
Proposition~\ref{prop:G} gives us a geometric interpretation of this isomorphism.
Note that the automorphism group $\Aut(T(m))$ of the triangular graph $T(m)$ is isomorphic to $\SSSS_m$
by definition.
\begin{theorem}\label{thm:T7}
For each element $D$ of $\DDD$,
the action of $\stab(D)$ on the triangular graph $D\cong T(7)$
identifies $\stab(D)$ with the subgroup $\AAAA_7$ of $\Aut(T(7))\cong \SSSS_7$.
\end{theorem}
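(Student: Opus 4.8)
The plan is to identify the abstract isomorphism type of $\stab(D)$ first, and then to pin down the resulting embedding into $\Aut(T(7))\cong\SSSS_7$ by a short order argument. Since the edges of $\graphG$ are defined by conditions invariant under $\PGU_3(\F_{25})$, every element of $\PGU_3(\F_{25})$ acts on $\QQQ$ as an automorphism of $\graphG$ and therefore permutes the set $\DDD$ of connected components. Consequently $\stab(D)$ acts on the vertex set $D$ of the component $D\cong T(7)$ (Proposition~\ref{prop:G}) through graph automorphisms, giving a homomorphism $\rho\colon \stab(D)\to \Aut(T(7))\cong\SSSS_7$. I would first record that this action is transitive: given $Q,Q\sprime\in D$, transitivity of $\PGU_3(\F_{25})$ on $\QQQ$ supplies $g$ with $g(Q)=Q\sprime$, and since $g$ maps the component containing $Q$ to the one containing $Q\sprime$ (both equal to $D$), we have $g\in\stab(D)$. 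As $|D|=21>1$, this already forces $\rho$ to be non-trivial.

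Next I would run the order bookkeeping. By transitivity of $\PGU_3(\F_{25})$ on $\DDD$ we get $|\stab(D)|=|\PGU_3(\F_{25})|/|\DDD|=378000/150=2520=|\AAAA_7|$. Now fix two distinct conics $Q,Q\sprime\in D$. Any $g\in\stab(Q)$ fixes $Q$ and hence preserves the component $D$, so $\stab(Q)\subseteq\stab(D)$, and likewise $\stab(Q\sprime)\subseteq\stab(D)$; therefore $\gen{\stab(Q),\stab(Q\sprime)}\subseteq\stab(D)$. Because $Q$ and $Q\sprime$ lie in the same component of $\graphG$, Theorem~\ref{thm:S5} yields $\gen{\stab(Q),\stab(Q\sprime)}\cong\AAAA_7$. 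Comparing orders then forces the equality $\stab(D)=\gen{\stab(Q),\stab(Q\sprime)}\cong\AAAA_7$.

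To finish, I would analyse $\rho$ using simplicity. Its kernel is a normal subgroup of $\stab(D)\cong\AAAA_7$; since $\AAAA_7$ is simple and $\rho$ is non-trivial (from the transitivity above), $\ker\rho$ is trivial, so $\rho$ is injective. The image is then a subgroup of $\SSSS_7$ of order $2520$, that is, of index $2$. As $\SSSS_7$ has a unique subgroup of index $2$, namely its alternating subgroup, the image is exactly the standard $\AAAA_7\subset\SSSS_7$. Hence $\rho$ identifies $\stab(D)$ with the subgroup $\AAAA_7$ of $\Aut(T(7))\cong\SSSS_7$, as asserted.

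The main obstacle is not in this final assembly, which is pure group-order arithmetic together with the simplicity of $\AAAA_7$ and the uniqueness of the index-$2$ subgroup of $\SSSS_7$; rather, it lies entirely in the input I am borrowing, namely the generation statement of Theorem~\ref{thm:S5} that $\gen{\stab(Q),\stab(Q\sprime)}\cong\AAAA_7$ for two conics of a common component, together with the identification $D\cong T(7)$ of Proposition~\ref{prop:G}. If one preferred to argue injectivity of $\rho$ geometrically rather than via simplicity, the delicate point would be to show that no non-identity element of $\PGU_3(\F_{25})$ can fix all $21$ conics of $D$ setwise; but the transitivity-plus-simplicity route sidesteps this and keeps the proof short.
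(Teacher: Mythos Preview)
Your proof is correct and takes a genuinely different route from the paper. The paper proceeds by direct computer verification: it computes $\stab(D_1)$ explicitly from the permutation representation of $\PGU_3(\F_{25})$ on $\DDD$, writes down an explicit graph isomorphism $\kappa\colon D_1\to T(7)$ from Table~\ref{table:adjD1}, and checks on the machine that the induced homomorphism $\stab(D_1)\to\SSSS_7$ is injective (transitivity on $\DDD$ then handles general $D$). Your argument is conceptual instead: you obtain $|\stab(D)|=2520$ by orbit--stabilizer, use the second half of Theorem~\ref{thm:S5} to exhibit a subgroup $\gen{\stab(Q),\stab(Q\sprime)}\cong\AAAA_7$ inside $\stab(D)$ (forcing equality by order), and then invoke simplicity of $\AAAA_7$ together with the uniqueness of the index-$2$ subgroup of $\SSSS_7$ to pin down the image of $\rho$. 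The paper's approach needs no logical dependence on Theorem~\ref{thm:S5}---both theorems are verified independently by computation---whereas your approach trades that independence for the benefit of requiring no further machine work beyond what is already encoded in Proposition~\ref{prop:G}, the transitivity of $\PGU_3(\F_{25})$ on $\DDD$, and Theorem~\ref{thm:S5}.
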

In order to define the type $T(D, D\sprime)$
by means of the  structure of $\stab (D)\cong \AAAA_7$,
we define the following subgroups
of $\AAAA_7$.
See~\cite[pages 4 and 10]{MR827219}
for details.
Note that the full automorphism group of $\AAAA_7$ is $\SSSS_7$.
For a subgroup
$\Sigma$ of $\AAAA_7$, we put
$$
\Conj_{\AAAA}(\Sigma):=\shortset{g\inv \Sigma g}{g\in \AAAA_7},
\quad
\Conj_{\SSSS}(\Sigma):=\shortset{g\inv \Sigma g}{g\in \SSSS_7}.
$$
\par
\smallskip
(a) We put
$$
\Sigma_a:=\set{g\in \AAAA_7}{g(7)=7}.
$$
Then $\Sigma_a$ is isomorphic to $\AAAA_6$,
and is maximal in $\AAAA_7$.
Moreover we have 
$\Conj_{\AAAA}(\Sigma_a)=\Conj_{\SSSS}(\Sigma_a)$.
\par
\smallskip
(b)
We define a bijection $\rho: \P^2(\F_2)\isom \{1, \dots, 7\}$ by
$$
(a:b:c)\mapsto 4a+2b+c\quad(a, b\in \{0,1\}),
$$
and let $\rho\sprime: \P^2(\F_2)\isom \{1, \dots, 7\}$ be the composite of $\rho$ and the transposition $(67)$.
Then the action of $\PSL_3(\F_2)$  on $\P^2(\F_2)$
induces two faithful permutation  representations $\PSL_3(\F_2)\hookrightarrow \SSSS_7$
corresponding to  $\rho$ and $\rho\sprime$,
and the images $\Sigma_b$ and $\Sigma_b\sprime$ 
of these representations are  contained in $\AAAA_7$.
These subgroups $\Sigma_b$ and $\Sigma_b\sprime$ are of order $168$,
and  are maximal in $\AAAA_7$.
(Note that $\PSL_3(\F_2)\cong \PSL_2(\F_7)$.)
We also have 
$$
\Conj_{\SSSS}(\Sigma_b)=\Conj_{\SSSS}(\Sigma_b\sprime)=\Conj_{\AAAA}(\Sigma_b)\cup \Conj_{\AAAA}(\Sigma_b\sprime),
\quad
\Conj_{\AAAA}(\Sigma_b)\cap \Conj_{\AAAA}(\Sigma_b\sprime)=\emptyset.
$$
\par
\smallskip
(c)
We put
$$
\Sigma_c:=\set{g\in \AAAA_7}{\{g(5), g(6), g(7)\}=\{5,6,7\}}.
$$
Then $\Sigma_c$ is isomorphic to the group  $(\AAAA_4\times 3):2$ of order $72$,
and is maximal in $\AAAA_7$.
Moreover we have 
$\Conj_{\AAAA}(\Sigma_c)=\Conj_{\SSSS}(\Sigma_c)$.
\par
\smallskip
(d)
Because of the extra outer automorphism of $\SSSS_6$,
the group $\AAAA_6$ has  two  maximal subgroups
isomorphic to $\AAAA_5$ up to  inner automorphisms.
One is  a point stabilizer,
while  the other is  the stabilizer subgroup of a \emph{total} (a set of five synthemes containing all duads).
We fix a total
\begin{eqnarray*}
t_0&:=&
\{\{\{1, 2\}, \{3, 4\}, \{5, 6\}\}, \{\{1, 3\}, \{2, 5\}, \{4, 6\}\}, \{\{1, 4\}, \{2, 6\}, \{3, 5\}\}, \\
&&\{\{1, 5\}, \{2, 4\}, \{3, 6\}\}, \{\{1, 6\}, \{2, 3\}, \{4, 5\}\}\},
\end{eqnarray*}
and put
$$
\Sigma_d:=\set{g\in \AAAA_7}{g(7)=7,\;\; g(t_0)=t_0}.
$$
Then $\Sigma_d$ is isomorphic to $\AAAA_5$, and 
$\Conj_{\AAAA}(\Sigma_d)=\Conj_{\SSSS}(\Sigma_d)$ holds.
\par
\smallskip
Now we have the following:
\begin{theorem}\label{thm:A7}
Let $D$ and $D\sprime$ be distinct elements of $\DDD$.
We identify $\stab(D)$ with $\AAAA_7$ by Theorem~\ref{thm:T7}.
Then $T(D, D\sprime)$ is
$$
\begin{cases}
\beta^{21} & \textrm{if and only if $\stab(D) \cap \stab(D\sprime)$ is conjugate to $\Sigma_b$ or $\Sigma_b\sprime$}, \\
\gamma^{21} & \textrm{if and only if $\stab(D) \cap \stab(D\sprime)$ is conjugate to $\Sigma_d$}, \\
\alpha^{15}\gamma^6 & \textrm{if and only if $\stab(D) \cap \stab(D\sprime)$ is conjugate to $\Sigma_a$}, \\
\alpha^{3}\gamma^{18} & \textrm{if and only if $\stab(D) \cap \stab(D\sprime)$ is conjugate to $\Sigma_c$}.
\end{cases}
$$
\end{theorem}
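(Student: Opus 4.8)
The plan is to read off $\stab(D)\cap\stab(D\sprime)$ from the action of $\stab(D)\cong\AAAA_7$ on $\DDD\setminus\{D\}$. Since $\PGU_3(\F_{25})$ is transitive on $\DDD$, for $D\sprime\ne D$ the group $\stab(D)\cap\stab(D\sprime)$ is exactly the stabilizer in $\stab(D)\cong\AAAA_7$ of the point $D\sprime$; hence its $\AAAA_7$-conjugacy class is constant on each $\stab(D)$-orbit, and the orbit of $D\sprime$ has length $2520/|\stab(D)\cap\stab(D\sprime)|$, a number that must occur as the index of a subgroup of $\AAAA_7$. Because $T$ is $\PGU_3(\F_{25})$-invariant and $\stab(D)$ fixes $D$, the function $D\sprime\mapsto T(D,D\sprime)$ is constant on $\stab(D)$-orbits; thus each type-fibre is a union of orbits, of total size $7,\,30,\,42,\,70$ for $\alpha^{15}\gamma^6,\beta^{21},\gamma^{21},\alpha^3\gamma^{18}$ respectively by Proposition~\ref{prop:symtypeT}. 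Finally, identifying $D$ with the triangular graph $T(7)$ on the $2$-subsets of $\{1,\dots,7\}$ via Theorem~\ref{thm:T7}, the colouring $Q\mapsto t(Q,D\sprime)$ of Proposition~\ref{prop:typet} is invariant under $\stab(D)\cap\stab(D\sprime)$, so its colour classes are $(\stab(D)\cap\stab(D\sprime))$-stable sets of $2$-subsets.

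\textbf{The two informative types.} For $\tau=\alpha^{15}\gamma^6$ the six $2$-subsets coloured $\gamma$ should form the star of a single element $p\in\{1,\dots,7\}$ (the six pairs containing $p$), whence $\stab(D)\cap\stab(D\sprime)$ fixes $p$ and lies in a point stabilizer $\cong\AAAA_6$. As $\AAAA_7$ is simple of order $2520$, the smallest index of a proper subgroup is $7$, realised only by the point stabilizers; since the fibre has size $7$ it is a single orbit and $\stab(D)\cap\stab(D\sprime)=\Sigma_a$. For $\tau=\alpha^3\gamma^{18}$ the three $2$-subsets coloured $\alpha$ should form a triangle $\{p,q\},\{q,r\},\{r,p\}$ on a $3$-subset $\{p,q,r\}$; then $\stab(D)\cap\stab(D\sprime)$ stabilizes $\{p,q,r\}$ setwise, hence is contained in the even part $(\SSSS_3\times\SSSS_4)^+\cong\Sigma_c$ of order $72$. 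Consequently each orbit here has length $\ge 35$, so the fibre of size $70$ is either one orbit (stabilizer of order $36$) or two orbits of length $35$ (stabilizer $=\Sigma_c$); I would confirm the latter by showing the stabilizer contains an element inducing an odd permutation of $\{p,q,r\}$, equivalently $|\stab(D)\cap\stab(D\sprime)|=72$.

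\textbf{The two constant types.} When $\tau=\gamma^{21}$ or $\beta^{21}$ the colouring $t(\cdot,D\sprime)$ is constant and carries no information on $\{1,\dots,7\}$, so I would argue differently. By Theorem~\ref{thm:mainHfSg} and Proposition~\ref{prop:TDD}, the $42$ elements $D\sprime$ with $T(D,D\sprime)=\gamma^{21}$ are exactly the non-neighbours of $D$ inside its Hoffman-Singleton component $\CCC_i$, on which $\PSU_3(\F_{25})$ acts with vertex stabilizer $\stab(D)\cong\AAAA_7$; the classical suborbit decomposition $1+7+42$ of the Hoffman-Singleton graph under a vertex stabilizer shows these $42$ form one $\AAAA_7$-orbit whose point stabilizer is the \emph{total} $\AAAA_5$, i.e.\ $\Sigma_d$. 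For $\tau=\beta^{21}$ the fibre has size $30$; since the only maximal-subgroup order of $\AAAA_7$ divisible by $84$ is $168$ and $\PSL_2(\F_7)$ is simple, $\AAAA_7$ has no subgroup of order $84$, so $30$ is not an index and the only admissible decomposition is $15+15$. Each such orbit has stabilizer of order $168$, and every order-$168$ subgroup of $\AAAA_7$ is a maximal $\PSL_2(\F_7)$; these fall into the two classes $\Sigma_b,\Sigma_b\sprime$, giving $T(D,D\sprime)=\beta^{21}$.

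\textbf{Completion and main obstacle.} The equalities $\Conj_{\AAAA}=\Conj_{\SSSS}$ for $\Sigma_a,\Sigma_c,\Sigma_d$ and the fusion relations for $\Sigma_b,\Sigma_b\sprime$ are exactly the facts recorded in items (a)--(d) preceding the theorem, so once the four stabilizers are identified the statement follows. I expect the main obstacle to be precisely the two constant-colouring types together with the exact order in the $\alpha^3\gamma^{18}$ case: the colouring $t(\cdot,D\sprime)$ gives nothing for $\beta^{21}$ and $\gamma^{21}$, and distinguishing the \emph{total} $\AAAA_5$ from the point-stabilizer $\AAAA_5$, confirming that \emph{both} classes $\Sigma_b$ and $\Sigma_b\sprime$ genuinely occur, and ruling out the index-$2$ subgroup $3\times\AAAA_4$ inside $\Sigma_c$ all require more than counting. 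I would settle these either from the explicit suborbit data of the embedding $\AAAA_7<\PSU_3(\F_{25})$ underlying the Hoffman-Singleton component, or, following the computational style of the paper, by computing $\stab(D)\cap\stab(D\sprime)$ directly for one representative $D\sprime$ of each of the six non-trivial $\stab(D)$-orbits in $\PGU_3(\F_{25})$.
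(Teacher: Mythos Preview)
Your approach is genuinely different from the paper's.  The paper's entire argument for Theorem~\ref{thm:A7} is the sentence ``Using the matrix $\Tmat$, we also confirm Theorem~\ref{thm:A7}'': it computes $\stab(D_1)\cap\stab(D)$ for every $D\in\DDD$ via the permutation representations built in Section~5 and compares with $\Tmat$.  Your route through orbit--counting and the subgroup lattice of $\AAAA_7$ explains \emph{why} the numbers in Proposition~\ref{prop:symtypeT} match the conjugacy-class sizes in the Remark after Proposition~\ref{prop:twobb21}.  In particular your arguments for $\alpha^{15}\gamma^6$ and $\beta^{21}$ are complete by counting alone: $7$ is the minimal index of a proper subgroup of $\AAAA_7$, forcing $\Sigma_a$; and $30$ is not an index (no subgroup of order $84$, since $\PSL_2(\F_7)$ is simple and $84$ divides no other maximal order) while $30=15+15$ is the only partition into indices $\ge 7$, forcing order $168$.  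Note that your ``star'' claim is therefore unnecessary for $\alpha^{15}\gamma^6$---which is just as well, since you do not prove it.  Your $\gamma^{21}$ argument is also sound once Theorem~\ref{thm:mainHfSg} is available, the identification of the distance-$2$ two-point stabilizer in the Hoffman--Singleton graph with the total-$\AAAA_5$ being classical.

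There is one gap you do not flag.  For $\alpha^3\gamma^{18}$ you assert that the three $\alpha$-coloured $2$-subsets ``should form a triangle'' on a $3$-set, and deduce $\stab(D)\cap\stab(D\sprime)\le\Sigma_c$.  But a priori those three vertices of $T(7)$ could equally well form a path or a perfect matching, and without the triangle shape the fibre of size $70$ admits many decompositions into $\AAAA_7$-indices (e.g.\ $7+21+42$), not just $70$ or $35+35$.  You correctly list the $36$-versus-$72$ ambiguity as an obstacle, but the triangle claim itself is a prior one.  Your proposed fallback---computing $\stab(D)\cap\stab(D\sprime)$ for one representative of each $\stab(D)$-orbit---is exactly the paper's method, reduced from $149$ pairs to a handful; so in the end your argument buys insight plus a large reduction in the amount of computation, at the cost of importing the Hoffman--Singleton suborbit fact.
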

Note that the statement of Theorem~\ref{thm:A7} does not depend on the choice of the isomorphism
$\stab(D)\cong\AAAA_7$,
which is not unique up to 
conjugations by elements of $\AAAA_7$, 
but is unique up to conjugations by elements of $\SSSS_7$.
\par
\smallskip
Theorem~\ref{thm:A7} implies that $T(D, D\sprime)$ is determined 
simply by the order of the group $\stab(D) \cap \stab(D\sprime)$.
Combining this fact with Theorems~\ref{thm:mainHfSg},~\ref{thm:mainHgSm} and~\ref{thm:mainMcL},
we have obtained constructions of the three graphs in the title by the
subgroup structure of $\PGU_3(\F_{25})$.
(See also Theorem~\ref{thm:purelygroup}.)
\par
\smallskip
We fix $D\in \DDD$.
Let $\CCC_i$ be the connected component of $H$
containing $D$,
and let $\CCC_j$ and $\CCC_k$ be the other two connected components.
The set
$$
\NNN_D:=\set{D\sprime\in \DDD}{T(D, D\sprime)=\beta^{21}}
$$
of vertices that are adjacent to $D$ in $H\sprime$ but are not adjacent to $D$ in $H$
decomposes into the disjoint union of two subsets
$\NNN_D\cap \CCC_j$ and $\NNN_D\cap \CCC_k$.
\begin{proposition}\label{prop:twobb21}
Fixing $\stab(D)\cong \AAAA_7$ and 
interchanging $j$ and $k$ if necessary, we have the following;
for any $D\sprime\in \NNN_D$,  
\begin{eqnarray*}
D\sprime \in \NNN_D\cap \CCC_j\ &\Longleftrightarrow& 
\textrm{$\stab(D) \cap \stab(D\sprime)$ is conjugate to $\Sigma_b$},\\
D\sprime \in \NNN_D\cap \CCC_k\ &\Longleftrightarrow& 
\textrm{$\stab(D) \cap \stab(D\sprime)$ is conjugate to $\Sigma_b\sprime$}.
\end{eqnarray*}
\end{proposition}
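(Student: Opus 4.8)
The plan is to reduce the proposition to a single counting statement inside $\PGU_3(\F_{25})$, namely that each subgroup of $\stab(D)\cong\AAAA_7$ of order $168$ is contained in exactly two vertex stabilizers. First I would dispose of the easy reductions. By Proposition~\ref{prop:TDD}, any $D\sprime$ with $T(D,D\sprime)=\beta^{21}$ lies in a connected component of $H$ other than the one containing $D$, so indeed $\NNN_D=(\NNN_D\cap\CCC_j)\sqcup(\NNN_D\cap\CCC_k)$. Being simple and non-abelian, $\stab(D)\cong\AAAA_7$ has no subgroup of index $2$, hence cannot act nontrivially on the two-element set $\{\CCC_j,\CCC_k\}$; therefore it preserves each of $\NNN_D\cap\CCC_j$ and $\NNN_D\cap\CCC_k$. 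Applying Proposition~\ref{prop:coqliques} with the maps $g_j$ and $g_k$ identifies each of these two sets with a $15$-coclique, so each has exactly $15$ elements and $|\NNN_D|=30$, consistently with Proposition~\ref{prop:symtypeT}.

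Next I would determine the orbit structure. For $D\sprime\in\NNN_D$ we have $|\stab(D)\cap\stab(D\sprime)|=168$ by Theorem~\ref{thm:A7}, so the $\stab(D)$-orbit of $D\sprime$ has length $2520/168=15$. Since $\NNN_D\cap\CCC_j$ and $\NNN_D\cap\CCC_k$ are $\stab(D)$-invariant of size $15$, each is a single orbit, and within a single orbit all stabilizers $\stab(D)\cap\stab(D\sprime)$ lie in one $\AAAA_7$-conjugacy class. Thus it suffices to prove that the map $\phi\colon D\sprime\mapsto \stab(D)\cap\stab(D\sprime)$ is a bijection from $\NNN_D$ onto the set of all $30$ order-$168$ subgroups of $\stab(D)$ (those conjugate to $\Sigma_b$ or to $\Sigma_b\sprime$): the two orbit images are then disjoint $15$-element $\AAAA_7$-classes, hence the two distinct classes $\Conj_{\AAAA}(\Sigma_b)$ and $\Conj_{\AAAA}(\Sigma_b\sprime)$, and fixing the isomorphism $\stab(D)\cong\AAAA_7$ together with interchanging $j$ and $k$ if necessary yields the stated labelling. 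The residual sign ambiguity is precisely the non-uniqueness of this isomorphism, which may swap the two $\AAAA_7$-classes.

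A point $D\sprime\in\DDD$ is fixed by a subgroup $S\le\stab(D)$ exactly when $S\le\stab(D\sprime)$, in which case $S\le\stab(D)\cap\stab(D\sprime)$. When $|S|=168$, Theorem~\ref{thm:A7} forces $\stab(D)\cap\stab(D\sprime)=S$, because among the orders $360,168,72,60$ of $\Sigma_a,\Sigma_b,\Sigma_c,\Sigma_d$ only $168$ is divisible by $7$, so no larger intersection can contain a copy of $\PSL_2(\F_7)$. Hence $\phi$ is a bijection onto all $30$ subgroups precisely when every order-$168$ subgroup $S$ of $\stab(D)$ is contained in exactly two conjugates of $\AAAA_7$, namely $\stab(D)$ and one other. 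I would establish this by a double count: the $150$ conjugates of $\AAAA_7$ are all the vertex stabilizers, since the action on $\DDD$ is transitive and $378000/2520=150=|\DDD|$, and each contains the $30$ subgroups conjugate to $\Sigma_b$ or $\Sigma_b\sprime$, giving $150\times30=4500$ incident pairs; if the order-$168$ subgroups of $\PGU_3(\F_{25})$ number $378000/168=2250$, then dividing yields exactly two conjugates of $\AAAA_7$ through each such subgroup.

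The main obstacle is this last numerical input: that a subgroup $S\cong\PSL_2(\F_7)$ of order $168$ is self-normalizing in $\PGU_3(\F_{25})$, so that there are exactly $2250$ of them, and that they form a single conjugacy class, which is what makes the double count uniform (every such $S$ then lies in the same number of conjugates of $\AAAA_7$, and all $2250$ contribute to the $4500$ incidences). This is a concrete computation of the type already underlying Theorem~\ref{thm:A7}: I would realize $\stab(D)=\AAAA_7$ explicitly, select one of its $\PSL_2(\F_7)$-subgroups, and compute its normalizer in $\PGU_3(\F_{25})$ directly, thereby completing the proof.
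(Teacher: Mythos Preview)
Your proposal is sound and takes a genuinely different route from the paper. In the paper, Proposition~\ref{prop:twobb21} is established by direct computer verification: the authors compute $\stab(D_1)\cap\stab(D)$ for every $D\in\DDD$, identify its $\AAAA_7$-conjugacy class via the explicit isomorphism $\kappa\colon D_1\isom T(7)$, and then compare with the matrix $\Tmat$ and the component data $\CCC_1,\CCC_2,\CCC_3$. No structural reduction is attempted. Your argument, by contrast, uses the orbit structure to reduce everything to a single subgroup-theoretic fact about $\PSL_2(\F_7)$ inside $\PGU_3(\F_{25})$; this is conceptually cleaner and explains \emph{why} the two halves of $\NNN_D$ must land in distinct $\AAAA_7$-classes, rather than merely checking that they do.

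Two remarks. First, your sentence ``the $150$ conjugates of $\AAAA_7$ are all the vertex stabilizers, since the action on $\DDD$ is transitive and $378000/2520=150$'' does not follow from the stated reason: transitivity and orbit--stabilizer do not by themselves imply that distinct vertices have distinct stabilizers (equivalently, that $\AAAA_7$ is self-normalizing in $\PGU_3(\F_{25})$). You can sidestep this entirely by running the double count over pairs $(D',S)$ with $D'\in\DDD$ rather than over pairs (conjugate of $\AAAA_7$, $S$); the total $150\times 30=4500$ is then immediate, and what you want is the number of $D'\in\DDD$ fixed by $S$, not the number of conjugates of $\AAAA_7$ containing $S$. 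Second, the computational input can be trimmed: you do not actually need both ``single conjugacy class'' and ``self-normalizing''. It suffices to know that $\Sigma_b$ and $\Sigma_b'$ are conjugate in $\PGU_3(\F_{25})$. Indeed, you have already shown that $\phi$ restricted to each $15$-element orbit is a bijection onto one $\AAAA_7$-class (because $\Sigma_b$ is self-normalizing in $\AAAA_7$, as $|\Conj_{\AAAA}(\Sigma_b)|=15$); so either the two images coincide, giving $|\mathrm{Fix}_\DDD(S)|=3$ for one class and $1$ for the other, or they are distinct, giving $|\mathrm{Fix}_\DDD(S)|=2$ for all $30$ subgroups. If $\Sigma_b$ and $\Sigma_b'$ are $G$-conjugate then all $30$ have the same fixed-point count, forcing the second alternative. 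The self-normalizing property of $\PSL_2(\F_7)$ in $G$ then follows a posteriori from $|\mathrm{Fix}_\DDD(S)|=2$.
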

\begin{remark}
We have 
$$
|\Conj_{\AAAA}(\Sigma)|=\begin{cases}
7 & \textrm{if $\Sigma=\Sigma_a$}, \\
15 & \textrm{if $\Sigma=\Sigma_b$ or $\Sigma_b\sprime$}, \\
35 & \textrm{if $\Sigma=\Sigma_c$}, \\
42 & \textrm{if $\Sigma=\Sigma_d$}. \\
\end{cases}
$$ 
Compare this result with Proposition~\ref{prop:symtypeT}. 
\end{remark}
\subsection{Plan of the paper}\label{subsec:plan}
Suppose that an adjacency matrix of a graph $\varGamma$ is given.
If the number of vertices 
is not very large, 
it is a simple task for a computer to calculate 
the adjacency matrices of connected components of $\varGamma$.
Moreover, when $\varGamma$ is connected,
it is also easy  to 
determine whether $\varGamma$ is strongly regular or not,
and in the case when  $\varGamma$ is strongly regular,
to compute its parameters $(v, k, \lambda, \mu)$.
Since the three graphs we are concerned with are 
strongly regular graphs  uniquely determined by the parameters,
Theorems~\ref{thm:mainHfSg},~\ref{thm:mainHgSm} and~\ref{thm:mainMcL}  
can be verified if we calculate the adjacency matrices
of $H$, $H\sprime$ and $H\spprime$.
These adjacency matrices are computed from the list $\QQQ$ of conics 
 by a simple geometry.
On the other hand,
it seems to be  a non-trivial computational task to calculate the list $\QQQ$.
Therefore, 
in Section~\ref{sec:crossratio},
we give a  method to calculate the list $\QQQ$.
This list and other auxiliary computational data are on the author's web page
\par
 \verb+     http://www.math.sci.hiroshima-u.ac.jp/~shimada/HSgraphs.html .+
\par
\noindent
In Sections~\ref{sec:proof}~and~\ref{sec:proof2},
we indicate how to prove our results by these computational data. 
In Section~\ref{sec:other},
we discuss other geometric methods of defining edges of the graphs $H$ and $H\sprime$.
\par
\smallskip
This work stems from the author's joint work~\cite{KKSchar5} with Professors T.~Katsura and S.~Kondo
on the geometry of a supersingular $K3$ surface in characteristic $5$.
The author expresses his gratitude to them 
for many  discussions and  comments.
He also thanks the referees 
for their many useful comments and suggestions on the first version of this paper.
\section{Other methods of defining edges of   $H$ and $H\sprime$}\label{sec:other}
In this section,
we present various ways of defining edges of  $H$ and $H\sprime$.
\subsection{Definition of the edges of $H$ by $6$-cliques}\label{subsec:6cliques}
By the  assertion $D\cong T(7)$ of Proposition~\ref{prop:G},
each $D\in \DDD$ contains exactly seven $6$-cliques of $\graphG$.
Let $\KKK$ denote the set of $6$-cliques in $\graphG$.
We have 	$|\KKK|= 1050$, and, for each $K\in \KKK$, we have 
$$
|\bigcup_{Q\in K} (Q\cap \Gamma)|=36.
$$
%
%
\begin{proposition}\label{prop:uniqueK}
Let $K$ be a $6$-clique  of $\graphG$.
Then there exists a unique $6$-clique $K\sprime$ of $\graphG$ disjoint  from $K$ as a subset of $\QQQ$ such that
\begin{equation}\label{eq:cupK}
\bigcup_{Q\in K} (Q\cap \Gamma)=\bigcup_{Q\sprime\in K\sprime} (Q\sprime\cap \Gamma).
\end{equation}
\end{proposition}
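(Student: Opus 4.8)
The plan is to recast the statement as a question about the fibers of the map
$$
\phi\;:\;\KKK\longrightarrow \{\text{subsets of }\PPP\},\qquad \phi(K):=\bigcup_{Q\in K}(Q\cap\Gamma)=:\Pi(K),
$$
and then to control these fibers using the action of $\PGU_3(\F_{25})$. First I would dispose of the disjointness hypothesis. Each $K\in\KKK$ consists of six conics lying in a single connected component $D$ of $\graphG$, and $D$ is uniquely determined by $K$ since the components partition $\QQQ$. Under the identification $D\cong T(7)$ of Proposition~\ref{prop:G}, the seven $6$-cliques contained in $D$ correspond to the seven elements of $\{1,\dots,7\}$, the clique attached to $i$ consisting of the six pairs containing $i$; two distinct such cliques meet in exactly one vertex. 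By Proposition~\ref{prop:geomD} the six-point sets $Q\cap\Gamma$ with $Q\in D$ are pairwise disjoint, so two distinct $6$-cliques $K_1\ne K_2$ inside one $D$ have $\Pi(K_1)\cap\Pi(K_2)$ equal to the six points of their single common conic, whence $\Pi(K_1)\ne\Pi(K_2)$. Consequently any $6$-clique $K\sprime\ne K$ with $\Pi(K\sprime)=\Pi(K)$ must lie in a component $D\sprime\ne D$, and is then automatically disjoint from $K$ as a subset of $\QQQ$. Thus the proposition is equivalent to the assertion that the fiber $\phi\inv(\Pi(K))$ has exactly two elements.

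Next I would set up the group action. By Theorem~\ref{thm:T7}, $\stab(D)\cong\AAAA_7$ acts on the seven $6$-cliques of $D$ as $\AAAA_7$ acts on seven points, and any element fixing $K$ setwise must fix $D$; hence $\stab(K)$ is the stabilizer inside $\stab(D)$ and is isomorphic to $\AAAA_6$. In particular $|\stab(K)|=360$ and the orbit of $K$ has size $378000/360=1050=|\KKK|$, so $\PGU_3(\F_{25})$ is transitive on $\KKK$. Since $\Pi$ is equivariant, for any $K_1,K_2$ in a common fiber there is $g$ with $gK_1=K_2$, and then $g$ fixes $\Pi(K_1)=\Pi(K_2)$; hence $\stab(\Pi(K))$ acts transitively on $\phi\inv(\Pi(K))$. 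Therefore
$$
|\phi\inv(\Pi(K))|=[\,\stab(\Pi(K)):\stab(K)\,],
$$
and the entire statement reduces to proving that this index equals $2$, equivalently that $|\stab(\Pi(K))|=720$.

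The main obstacle is precisely this last index computation: I must produce one extra symmetry of $\Pi(K)$ and rule out any further ones. The extra symmetry is an element of $\PGU_3(\F_{25})$ fixing the $36$-point set $\Pi(K)$ setwise while moving $K$; it realizes an odd permutation of the six blocks, so that $\stab(\Pi(K))$ is an index-$2$ overgroup $\cong\SSSS_6$ of $\stab(K)\cong\AAAA_6$, and its nontrivial coset sends $K$ to the desired $K\sprime$. I would establish the index by a direct finite computation with the explicit list $\QQQ$: fix a representative $K$, compute $\Pi(K)$, and enumerate all conics $Q\sprime\in\QQQ$ with $Q\sprime\cap\Gamma\subseteq\Pi(K)$. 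The claim is that there are exactly twelve such conics, splitting into the six conics of $K$ together with six further conics that form a single $6$-clique $K\sprime$ in a component $D\sprime\ne D$ whose six disjoint blocks again partition $\Pi(K)$. Since any competitor $K\sprime$ must, by Proposition~\ref{prop:geomD}, consist of conics with $Q\sprime\cap\Gamma\subseteq\Pi(K)$, this enumeration yields existence and uniqueness simultaneously, and transitivity of $\PGU_3(\F_{25})$ on $\KKK$ removes the dependence on the choice of $K$. The delicate point is purely the exact count twelve: too few would break existence and too many would break uniqueness, so the argument rests entirely on verifying that $\stab(\Pi(K))$ acquires exactly one new involutory coset beyond $\stab(K)$ and no more.
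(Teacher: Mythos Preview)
Your reductions are correct: the disjointness argument via Proposition~\ref{prop:geomD}, the transitivity of $\PGU_3(\F_{25})$ on $\KKK$ through $\stab(K)\cong\AAAA_6$, and the identification of the fiber size with the index $[\stab(\Pi(K)):\stab(K)]$ all hold. The paper, however, takes a blunter route: it simply builds the full list $\Klist$ of all $1050$ six-cliques from $A_G$ and $\Dlist$ and then, using $\Qlist$, $\Dlist$, $\Klist$, confirms Proposition~\ref{prop:uniqueK} by exhaustive search, with no group-theoretic overlay. Your framing is more conceptual---it explains \emph{why} the fiber has size two (an $\AAAA_6\subset\SSSS_6$ extension) and shows that disjointness need not be imposed---but it still terminates in the same kind of explicit enumeration (the count of twelve conics $Q\sprime$ with $Q\sprime\cap\Gamma\subseteq\Pi(K)$) that the paper performs wholesale. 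What your approach buys is economy (one representative instead of $1050$) and structural insight; what the paper's approach buys is that it avoids any forward reference to Theorem~\ref{thm:T7} and runs entirely on the adjacency data already computed in Section~\ref{sec:proof}.
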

We denote by $\PK$ the set of pairs $\{K, K\sprime\}$ of disjoint $6$-cliques of $\graphG$ satisfying~\eqref{eq:cupK}.
The edges  of the graph  $H$ can be defined as follows:
\begin{proposition}\label{prop:adjHbyK}
Two distinct vertices $D$ and $D\sprime$ of $H$
are adjacent  in $H$ if and only if 
there exists $\{K, K\sprime\} \in \PK$ 
such that $D$ contains $K$ and $D\sprime$ contains $K\sprime$.
\end{proposition}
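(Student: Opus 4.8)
The plan is to prove both directions by identifying, for an adjacent pair $D, D\sprime$, the two distinguished $6$-cliques cut out by the type $\alpha^{15}\gamma^6$, and then to convert this into the statement about $\PK$ using the uniqueness in Proposition~\ref{prop:uniqueK}. For the forward implication I would assume $T(D,D\sprime)=\alpha^{15}\gamma^6$ and take $K\subset D$ to be the six conics $Q\in D$ with $t(Q,D\sprime)=\gamma$. By Theorems~\ref{thm:A7} and~\ref{thm:T7}, $\stab(D)\cap\stab(D\sprime)$ is conjugate to $\Sigma_a\cong\AAAA_6$, a point stabilizer inside $\stab(D)\cong\AAAA_7$ acting on $D\cong T(7)$; its two orbits on the $21$ vertices are the star of the six pairs through the fixed index and the remaining $15$ pairs. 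Since this subgroup preserves the function $Q\mapsto t(Q,D\sprime)$, the six $\gamma$-conics are exactly that star, hence a $6$-clique of $\graphG$. By Proposition~\ref{prop:symtypeT} the same holds with $D$ and $D\sprime$ interchanged, so the six conics $Q\sprime\in D\sprime$ with $t(Q\sprime,D)=\gamma$ likewise form a $6$-clique $K\sprime\subset D\sprime$.

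The crux is a block analysis of the matrix $M$ whose $(Q,Q\sprime)$-entry is $|Q\cap Q\sprime\cap\Gamma|$, with rows indexed by $D$ and columns by $D\sprime$. Because each of $D$ and $D\sprime$ partitions $\PPP$ (Proposition~\ref{prop:geomD}) and $Q\cap\Gamma\subset\PPP$, every row sum and every column sum equals $6$. A $\gamma$-row $Q\in K$ has entries in $\{0,1\}$, whereas an $\alpha$-column $Q\sprime\in D\sprime\setminus K\sprime$ has entries in $\{0,2\}$; hence the block $K\times(D\sprime\setminus K\sprime)$ is zero, and by symmetry so is $(D\setminus K)\times K\sprime$. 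Therefore each point of $Q\cap\Gamma$ with $Q\in K$ lies in a conic of $K\sprime$, and conversely, which yields
$$
\bigcup_{Q\in K}(Q\cap\Gamma)=\bigcup_{Q\sprime\in K\sprime}(Q\sprime\cap\Gamma).
$$
As $D\ne D\sprime$ are disjoint subsets of $\QQQ$ we have $K\cap K\sprime=\emptyset$, so $\{K,K\sprime\}\in\PK$ with $K\subset D$ and $K\sprime\subset D\sprime$, which is the required conclusion.

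For the converse I would first note that two distinct $6$-cliques inside a single $D\cong T(7)$ are stars through distinct indices $i,i\sprime$ and thus share the vertex $\{i,i\sprime\}$; disjoint $6$-cliques therefore lie in different components of $\graphG$, which already forces $D\ne D\sprime$. I would then promote the forward implication to a bijection: sending an $H$-neighbour $D\sprime$ of $D$ to the $6$-clique $K(D,D\sprime)$ of its $\gamma$-conics defines a map from the neighbours of $D$ to the $6$-cliques of $D$. It is injective, for if two neighbours gave the same clique $K$ they would, by Proposition~\ref{prop:uniqueK}, share the same partner $K\sprime$, and a $6$-clique lies in a unique component of $\graphG$ and so determines the neighbour. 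Since each component of $H$ is a Hoffman-Singleton graph and hence $7$-regular, while each $D$ contains exactly seven $6$-cliques, this is a bijection between two $7$-element sets. Now if $\{K,K\sprime\}\in\PK$ with $K\subset D$ and $K\sprime\subset D\sprime$, I would write $K=K(D,D\spprime)$ for the unique neighbour $D\spprime$ furnished by the bijection; its partner is $K(D\spprime,D)\subset D\spprime$, and uniqueness of the partner of $K$ forces $K\sprime=K(D\spprime,D)$. Thus $K\sprime\subset D\spprime$, so $D\sprime=D\spprime$ is a neighbour of $D$, i.e. $D$ and $D\sprime$ are adjacent in $H$.

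The heart of the argument is the block analysis: the elementary observation that a $\gamma$-row and an $\alpha$-column can share only a zero entry is exactly what forces the two $36$-point unions to coincide, and hence what ties adjacency in $H$ to the pairs in $\PK$. The one point requiring genuine care is the identification of the six $\gamma$-conics as an actual $6$-clique, rather than merely a set of six conics; I would carry this out through the stabilizer description above, though it can equally be checked directly from the explicit list $\QQQ$.
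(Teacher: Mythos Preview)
Your argument is correct and conceptually complete, but it takes a genuinely different route from the paper. The paper establishes Proposition~\ref{prop:adjHbyK} by direct computer verification: from the explicit lists $\Qlist$, $\Dlist$, $\Klist$ and the matrix $\Tmat$ it simply checks, for every unordered pair $\{D,D\sprime\}$, whether the adjacency condition in $H$ matches the existence of a pair $\{K,K\sprime\}\in\PK$ with $K\subset D$ and $K\sprime\subset D\sprime$. There is no structural argument at all. Your proof, by contrast, explains the phenomenon: you use Theorems~\ref{thm:T7} and~\ref{thm:A7} to identify the six $\gamma$-conics with an $\AAAA_6$-orbit and hence with a star in $T(7)$, and your block analysis of the matrix $(|Q\cap Q\sprime\cap\Gamma|)_{Q\in D,\,Q\sprime\in D\sprime}$ is the genuine reason the two $36$-point unions coincide; the counting bijection between $H$-neighbours and $6$-cliques then closes the converse cleanly. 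The trade-off is that your route imports Theorems~\ref{thm:T7} and~\ref{thm:A7}, which in this paper are themselves verified computationally, so you have not eliminated computation but redistributed it; what you gain is a transparent link between the type $\alpha^{15}\gamma^{6}$ and the $6$-clique structure, which the paper's brute-force check does not provide.
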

In other words, the set  $\PK$ can be identified with the set of edges of $H$.
Each pair in $\PK$  has the following remarkable geometric 
property:
\begin{proposition}\label{prop:geomK}
Let $\{K, K\sprime\}$ be an element of $\PK$.
Then  any $Q\in K$ and any $Q\sprime\in K\sprime$ intersect at one point with intersection multiplicity $4$.
\end{proposition}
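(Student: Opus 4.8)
The plan is to first reduce the statement to a combinatorial ``grid'' picture supplied by the earlier results, and then to pin down the intersection multiplicity by Bézout together with a hyperosculation argument. I would begin by translating the hypothesis $\{K, K\sprime\}\in\PK$ into the language of $H$. By Proposition~\ref{prop:adjHbyK} the pair corresponds to an edge $\{D, D\sprime\}$ of $H$ with $K\subset D$ and $K\sprime\subset D\sprime$, so by Theorem~\ref{thm:mainHfSg} we have $T(D, D\sprime)=\alpha^{15}\gamma^6$, and by Theorem~\ref{thm:A7} the group $\stab(D)\cap\stab(D\sprime)$ is conjugate to $\Sigma_a\cong\AAAA_6$. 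Using the identification $\stab(D)\cong\AAAA_7$ of Theorem~\ref{thm:T7} and the action of $\AAAA_7$ on $D\cong T(7)$, the point stabilizer $\Sigma_a$ (fixing, say, the symbol $7$) has exactly two orbits on the $21$ conics of $D$: the $15$ pairs inside $\{1,\dots,6\}$ and the $6$ conics $\{i,7\}$ forming the star at $7$. Since the type $t(Q, D\sprime)$ is constant on $\Sigma_a$-orbits, the six type-$\gamma$ conics are precisely this star, which is a $6$-clique. As the pair $\{K, K\sprime\}$ is the unique pair attached to the edge $\{D,D\sprime\}$ (Propositions~\ref{prop:uniqueK} and~\ref{prop:adjHbyK}), it is $\Sigma_a$-invariant; and the star at $7$ is the only $\Sigma_a$-invariant star, so $K$ coincides with the star of type-$\gamma$ conics, and symmetrically for $K\sprime$.

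Next I would extract the grid. For $Q\in K$ the type $\gamma$ means, by Proposition~\ref{prop:typet}, that $Q\cap\Gamma$ meets exactly six conics of $D\sprime$, in one point each. Now $Q\cap\Gamma$ lies in $\Pi:=\bigcup_{Q\spprime\in K}(Q\spprime\cap\Gamma)=\bigcup_{Q\sprime\in K\sprime}(Q\sprime\cap\Gamma)$, the first equality being the definition of $\Pi$ and the second being~\eqref{eq:cupK}; and by Proposition~\ref{prop:geomD} the conics of $D\sprime$ have pairwise disjoint traces on $\Gamma$, so the unique conic of $D\sprime$ through any point of $\Pi$ already lies in $K\sprime$. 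Hence those six conics are exactly the members of $K\sprime$, and for every $Q\in K$ and $Q\sprime\in K\sprime$ we get $|Q\cap Q\sprime\cap\Gamma|=1$; call this common point $R$. Since $Q$ and $Q\sprime$ are totally tangent to $\Gamma$ (Definition~\ref{def:ttconics}), each meets $\Gamma$ at $R$ with multiplicity $2$, so both are tangent there to the tangent line $l_R$ of $\Gamma$; consequently $Q$ and $Q\sprime$ are mutually tangent at $R$ and $(Q\cdot Q\sprime)_R\ge 2$.

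It remains to upgrade this to $Q\cap Q\sprime=\{R\}$ with $(Q\cdot Q\sprime)_R=4$. By Bézout the total intersection number of the two conics is $4$, so the assertion is equivalent to the purely local statement that $Q$ and $Q\sprime$ have contact of order $4$ (hyperosculate) at $R$, or equivalently that the pencil $\gen{Q, Q\sprime}$ contains the double line $l_R^2$, in which case its base locus is $Q\cap l_R=\{R\}$. The grid already forbids further common points on $\Gamma$, so the only thing to exclude is an intersection point off $\Gamma$. This is the main obstacle: the tangency above only gives contact of order $\ge 2$, and some genuinely arithmetic input about the Hermitian curve is needed to force order $4$. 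The conceptual route I would attempt is to prove $l_R^2\in\gen{Q,Q\sprime}$ directly, exploiting the special fact that at the Weierstrass point $R$ the tangent $l_R$ meets $\Gamma$ with multiplicity $6$; but I expect this to be delicate, since the tangency data alone do not see the higher jets.

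Accordingly, the clean resolution I would adopt is a reduction to a finite computation. The group $\Aut(\Gamma)=\PGU_3(\F_{25})$ acts transitively on $\PK$ (equivalently on the edges of $H$): each of the three Hoffman--Singleton components of $H$ is edge-transitive under $\PSU_3(\F_{25})$, and these components are permuted by the quotient $\PGU_3(\F_{25})/\PSU_3(\F_{25})\cong\Z/3$. Fixing one explicit pair $\{K, K\sprime\}$ from the list $\QQQ$, the claim reduces to checking the $36$ pairs $(Q, Q\sprime)\in K\times K\sprime$, namely a local computation of the two conic jets at $R$ exhibiting their third-order coincidence. Since the edge stabiliser has order $378000/525=720$ and acts on these $36$ cells of the grid, a single such local computation settles the general case once one verifies that this action is transitive; in any event only finitely many explicit intersection computations are required, which is exactly the kind of verification carried out from the list $\QQQ$.
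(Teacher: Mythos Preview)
Your argument is correct, but it takes a considerably more elaborate route than the paper's. The paper's proof is a two-line computation: since $\Aut(\Gamma)$ acts transitively on $\QQQ$, it suffices to take $Q=Q_1$; the two $6$-cliques $K_a,K_b$ through $Q_1$ and their partners $K_a\sprime,K_b\sprime$ are listed explicitly, and one simply checks (from the equations in Table~\ref{table:Ksprime}, or equivalently from the row $n(Q)=[0,0,0,1]$ in Table~\ref{table:IP}) that each of the $12$ conics in $K_a\sprime\cup K_b\sprime$ meets $Q_1$ in a single point with multiplicity $4$. No structural analysis of $K$ or $D$ is used.

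By contrast, you first invoke the group-theoretic Theorems~\ref{thm:T7} and~\ref{thm:A7} together with the bijection between $\PK$ and the edge set of $H$ to identify $K$ with the unique $\Sigma_a$-invariant star in $D$, hence with the six type-$\gamma$ conics; this yields the grid picture $|Q\cap Q\sprime\cap\Gamma|=1$ and the tangency $(Q\cdot Q\sprime)_R\ge 2$ conceptually. Only then do you reduce, via transitivity on $\PK$, to an explicit check. Your route gives genuine structural insight (in particular it explains \emph{why} the intersection point lies on $\Gamma$), but at the price of heavier dependencies: you are using results proved in Section~\ref{sec:proof2}, whereas the paper establishes Proposition~\ref{prop:geomK} already in Section~\ref{sec:proof}, independently of the $\AAAA_7$-analysis. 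Your reduction is also slightly less economical---transitivity on $\PK$ followed by up to $36$ checks, versus the paper's transitivity on $\QQQ$ followed by $12$ checks. One small citation point: the uniqueness of the pair $\{K,K\sprime\}$ attached to an edge is not literally the content of Propositions~\ref{prop:uniqueK} and~\ref{prop:adjHbyK}, but rather the counting remark immediately following Proposition~\ref{prop:adjHbyK} (that $\PK$ is in bijection with the edges of $H$).
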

\subsection{Definition of the edges of $H$ by special secant lines}\label{subsec:SQ}
For $D\in \DDD$, we put
$$
\SSS_D:=\bigcup_{Q\in D} \SSS(Q).
$$
\begin{proposition}\label{prop:secD}
We have $|\SSS_D|=105$ for any $D\in \DDD$.
\end{proposition}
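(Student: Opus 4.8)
The plan is to avoid looking at individual lines and instead run a \emph{second-moment} (double-counting) argument for the incidence between the $21$ conics of $D$ and the special secant lines of $\Gamma$. For a line $L\in\SSS$ write
\[
m(L):=|\set{Q\in D}{L\in \SSS(Q)}|,
\]
so that $\SSS_D=\set{L\in\SSS}{m(L)\ge 1}$; setting $n_k:=|\set{L\in\SSS}{m(L)=k}|$ we have $|\SSS_D|=\sum_{k\ge 1}n_k$. First I would record the first moment by counting incident pairs $(L,Q)$ in two ways:
\[
\sum_{L\in\SSS} m(L)=\sum_{Q\in D}|\SSS(Q)|=21\cdot 15=315 ,
\]
using $|D|=21$ and $|\SSS(Q)|=15$ for every $Q$.

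Next I would establish two inputs. The first is the bound $m(L)\le 3$. Since the $21$ hexads $Q\cap\Gamma$ ($Q\in D$) are pairwise disjoint and partition $\PPP$ (Proposition~\ref{prop:geomD}), each of the six points of $L\cap\Gamma\subset\PPP$ lies in exactly one hexad; moreover a line meets the non-singular conic $Q$ in at most two points, so $|L\cap(Q\cap\Gamma)|\le 2$ and $L\in\SSS(Q)$ holds precisely when this number equals $2$. As the six points are thus distributed among the hexads in blocks of size at most $2$, at most three hexads receive two of them, i.e.\ $m(L)\le 3$. The second input is a lower bound for the second moment. Counting pairs $(\{Q,Q\sprime\},L)$ with $L\in\SSS(Q)\cap\SSS(Q\sprime)$ in two ways gives
\[
\sum_{L\in\SSS}\binom{m(L)}{2}=\sum_{\{Q,Q\sprime\}\subset D}|\SSS(Q)\cap\SSS(Q\sprime)| .
\]
By Proposition~\ref{prop:G} the graph induced by $\graphG$ on $D$ is the triangular graph $T(7)$, which has $21\cdot 10/2=105$ edges, and by the very definition of the edges of $\graphG$ every edge $\{Q,Q\sprime\}$ satisfies $|\SSS(Q)\cap\SSS(Q\sprime)|=3$. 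Discarding the non-adjacent pairs, whose contributions are nonnegative, yields $\sum_L\binom{m(L)}{2}\ge 105\cdot 3=315$.

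Finally I would combine the two relations. Because $m(L)\le 3$, the second-moment bound reads $n_2+3n_3\ge 315$, while the first moment reads $n_1+2n_2+3n_3=315$; subtracting forces $n_1+n_2\le 0$, hence $n_1=n_2=0$, and then $3n_3=315$ gives $n_3=105$ and $|\SSS_D|=105$. The only step I expect to look like a genuine obstacle is the second moment: a direct approach would also require the intersection numbers $|\SSS(Q)\cap\SSS(Q\sprime)|$ for the $105$ \emph{non}-adjacent pairs of $D$, and computing these is exactly the work one would like to avoid. The key point of the argument is that a one-sided (lower) bound suffices — the two moment identities together with $m(L)\le 3$ and $n_k\ge 0$ already pin down the distribution, and as a byproduct the forced equality $n_2+3n_3=315$ shows a posteriori that non-adjacent conics of $D$ share no special secant line at all.
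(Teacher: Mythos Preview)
Your argument is correct. The paper itself does not give a human-readable proof of this proposition: it is placed in Section~\ref{subsec:SQ}, and the end of Section~\ref{sec:proof} states that ``the proofs in Sections~\ref{subsec:SQ} and~\ref{subsec:RRRTTT} are analogous and we omit the details,'' meaning the authors simply verified $|\SSS_D|=105$ by direct machine computation from the precomputed lists $\Qlist$, $\SQlist$, $\Dlist$. Your route is genuinely different: a first/second-moment count on the incidence between $D$ and $\SSS$, using only Proposition~\ref{prop:geomD} (the hexads partition $\PPP$), the trivial bound $|L\cap Q|\le 2$, and the edge condition $|\SSS(Q)\cap\SSS(Q')|=3$ built into the definition of $\graphG$ together with the $T(7)$ structure from Proposition~\ref{prop:G}. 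What the paper's computational check buys is certainty with zero additional thought; what your argument buys is a conceptual explanation that avoids any computer verification for this particular statement, and as you note it also yields for free the stronger facts that every $L\in\SSS_D$ lies in $\SSS(Q)$ for exactly three $Q\in D$ and that non-adjacent conics of $D$ share no special secant line.
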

\begin{proposition}\label{prop:adjHsec}
Let $D$ and $D\sprime$ be distinct vertices of $H$. Then we have 
$$
|\SSS_D\cap \SSS_{D\sprime}|=\begin{cases}
45 & \textrm{if $D$ and $D\sprime$ are adjacent in $H$, }\\
15 & \textrm{if $D$ and $D\sprime$ are not adjacent }\\
& \textrm{\qquad\qquad but in the same connected component of $H$, }\\
21 & \textrm{if $D$ and $D\sprime$ are  in different   connected components of $H$.}
\end{cases}
$$
If  $D$ and $D\sprime$ are  in different  connected components of $H$,
then ${\PPP}$ is a disjoint union of $21$ sets of six points $L\cap\Gamma$,
where $L$ runs through $\SSS_D\cap \SSS_{D\sprime}$.
\end{proposition}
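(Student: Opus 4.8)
The plan is to first characterize $\SSS_D$ intrinsically in terms of the partition of $\PPP$ attached to $D$, and then to reduce the computation of $|\SSS_D\cap\SSS_{D\sprime}|$ to the four possible types $T(D,D\sprime)$. By Proposition~\ref{prop:geomD}, the sets $Q\cap\Gamma$ with $Q\in D$ form a partition $\pi_D$ of $\PPP$ into $21$ blocks of six points. I first claim that $\SSS_D$ consists exactly of those special secant lines $L$ whose six points $L\cap\Gamma$ split as $2+2+2$ among three blocks of $\pi_D$, the remaining $420$ lines of $\SSS$ being transversals of $\pi_D$. Since $|\SSS(Q)|=15=\binom{6}{2}$, no three points of any $Q\cap\Gamma$ are collinear, so no block of $\pi_D$ meets a line $L$ in more than two points; writing $a(L)$ for the number of blocks meeting $L$ in exactly two points, we thus have $a(L)\le 3$. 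Two distinct Weierstrass points lie on a unique line of $\P^2$, which is then the unique special secant line through them, so each of the $21\binom{6}{2}=315$ pairs of points lying in a common block accounts for exactly one block contributing to some $a(L)$; summing gives $\sum_{L\in\SSS}a(L)=315$, and $L\in\SSS_D$ precisely when $a(L)\ge 1$. By Proposition~\ref{prop:secD} there are $105$ such lines, so $\sum_{a(L)\ge 1}a(L)=315=3\cdot 105$ together with $a(L)\le 3$ forces $a(L)=3$ for every $L\in\SSS_D$, proving the claim.

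It follows that $L\in\SSS_D\cap\SSS_{D\sprime}$ if and only if $L$ splits as $2+2+2$ over both $\pi_D$ and $\pi_{D\sprime}$. Since $\PGU_3(\F_{25})$ acts transitively on $\DDD$ and each $g\in\stab(D)$ satisfies $g(\SSS_D)=\SSS_D$ and $g(\SSS_{D\sprime})=\SSS_{gD\sprime}$, the number $|\SSS_D\cap\SSS_{D\sprime}|$ is constant on the orbits of $\stab(D)\cong\AAAA_7$ acting on $\DDD\setminus\{D\}$, so it suffices to evaluate it on one representative of each orbit. By Theorem~\ref{thm:A7} the type $T(D,D\sprime)$ is determined by the order of $\stab(D)\cap\stab(D\sprime)$, and using the explicit list $\QQQ$, equivalently the permutation action of $\stab(D)$ on $\PPP$ and on $\SSS$, one finds the value to be $45$ when $T(D,D\sprime)=\alpha^{15}\gamma^6$, to be $15$ when $T(D,D\sprime)=\gamma^{21}$, and to be $21$ when $T(D,D\sprime)$ is $\beta^{21}$ or $\alpha^3\gamma^{18}$. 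By Proposition~\ref{prop:TDD} these three outcomes are precisely adjacency in $H$, non-adjacency within a single component, and membership in two distinct components, which gives the three cases of the statement. As a consistency check, transitivity of $\PGU_3(\F_{25})$ on $\SSS$ makes $r(L):=|\{D:L\in\SSS_D\}|=150\cdot 105/525=30$ independent of $L$, whence $\sum_{D\sprime\ne D}|\SSS_D\cap\SSS_{D\sprime}|=105\cdot 29=3045$, in agreement with the value $7\cdot45+42\cdot15+100\cdot21=3045$ read off from the orbit sizes in Proposition~\ref{prop:symtypeT}.

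For the final assertion, let $D$ and $D\sprime$ lie in different components, so that $|\SSS_D\cap\SSS_{D\sprime}|=21$. Two distinct special secant lines meet in at most one Weierstrass point, and $21\times 6=126=|\PPP|$; hence it suffices to prove that the sets $L\cap\Gamma$, for $L\in\SSS_D\cap\SSS_{D\sprime}$, are pairwise disjoint, for then the count forces them to partition $\PPP$. I expect this disjointness to be the crux: the averaging argument above determines the number of common lines but not their mutual incidences, and the fact that the two different-component types $\beta^{21}$ and $\alpha^3\gamma^{18}$ yield the same value $21$ shows that no purely numerical identity can single out the covering case. The natural tool is the group $\stab(D)\cap\stab(D\sprime)$, which by Theorem~\ref{thm:A7} is conjugate to $\Sigma_b$ of order $168$ or to $\Sigma_c$ of order $72$; it stabilizes both $D$ and $D\sprime$, hence permutes the $21$ common lines and the $126$ points of $\PPP$, and showing that these lines form a block system for this action (equivalently, that each point lies on a single common line) yields the partition. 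In the absence of a uniform orbit argument, I would verify the partition directly on one representative pair of each of the two different-component types from the list $\QQQ$.
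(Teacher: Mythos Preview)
Your argument is correct, and it is considerably more structured than what the paper does. The paper's proof of this proposition is literally the sentence ``The proofs in Sections~\ref{subsec:SQ} and~\ref{subsec:RRRTTT} are analogous and we omit the details'': everything is read off directly from the precomputed lists $\Qlist$, $\SQlist$, $\Dlist$ and the matrix $\Tmat$. You instead give an intrinsic description of $\SSS_D$ (the $2{+}2{+}2$ lines relative to the partition $\pi_D$), reduce by the transitive action of $\stab(D)$ to orbit representatives, and add a double-counting sanity check via $r(L)=30$; only the final numerical values and the disjointness of the $21$ lines are delegated to the same explicit lists the paper uses. This buys genuine insight---in particular, the averaging argument $\sum_L a(L)=315=3\cdot 105$ with $a(L)\le 3$ is a clean conceptual proof that every $L\in\SSS_D$ is a $2{+}2{+}2$ line, a fact the paper never isolates.

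One small point worth making explicit: the $\stab(D)$-orbits on $\DDD\setminus\{D\}$ are finer than the four types. By orbit--stabilizer and Theorem~\ref{thm:A7}, the types $\beta^{21}$ and $\alpha^{3}\gamma^{18}$ each split into two $\stab(D)$-orbits (of sizes $15+15$ and $35+35$, matching Proposition~\ref{prop:symtypeT}), so when you ``evaluate on one representative of each orbit'' you are in fact checking six cases, not four, and then observing that the two orbits within each of those types return the same value $21$. This is of course exactly what the list-based computation shows, but since you phrase the reduction in terms of orbits and report the outcome in terms of types, it is worth saying so.
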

\subsection{Definition of the edges of $H\sprime$ by doubly tangential pairs of conics}\label{subsec:RRRTTT}
Suppose that $Q, Q\sprime\in \QQQ$ are distinct.
Since $Q$ and $Q\sprime$ are tangent at each point of $Q\cap Q\sprime\cap\Gamma$,
we have   $|Q\cap Q\sprime\cap\Gamma|\le 2$.
We say that a pair $\{Q, Q\sprime\}$ of conics
is \emph{doubly tangential} if $|Q\cap Q\sprime\cap\Gamma|=2$  holds.
It turns out that, if  $\{Q, Q\sprime\}$ is a doubly tangential pair,
then $|\SSS(Q)\cap \SSS(Q\sprime)|$ is either one or three~(see Table~\ref{table:IP}). 
For $Q\in \QQQ$, we put
$$
\RRR(Q) := \set{Q\sprime\in \QQQ}{|Q\cap Q\sprime\cap\Gamma|=2,\; |\SSS(Q)\cap \SSS(Q\sprime)|=1},
$$
and for $D\in \DDD$, we put $\RRR_D:=\bigcup_{Q\in D} \RRR(Q)$.
For each set  $\CCC_i$ of vertices of a connected component of $H$,
we put
$$
\wt{\CCC}_i:=\bigcup_{D\in \CCC_i} D\;\;\subset\;\;\QQQ.
$$
Then we have 
$|\wt{\CCC}_i|=1050$ and $\wt{\CCC}_1 \cup \wt{\CCC}_2\cup \wt{\CCC}_3=\QQQ$.
\begin{proposition}\label{prop:RRR}
For any $Q\in \QQQ$, 
we have $|\RRR(Q)|=45$ and $|\RRR(Q)\cap\wt{\CCC}_i|=15$ for $i=1, 2, 3$.
For any $D\in \DDD$, 
we have $|\RRR_D|=735$ and 
$$
|\RRR_D\cap\wt{\CCC}_i|=\begin{cases}
105 & \textrm{if $D\in \CCC_i$,}\\
315 & \textrm{if $D\notin \CCC_i$.}\\
\end{cases}
$$
\end{proposition}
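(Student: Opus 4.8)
The plan is to reduce every count to a finite verification on a single orbit representative, using the transitive actions of $\Aut(\Gamma)=\PGU_3(\F_{25})$ on $\QQQ$ and on $\DDD$, and then to extract the numbers from the orbit decomposition recorded in Table~\ref{table:IP}. Two structural facts drive the reduction. First, since the abelianization of $\stab(Q)\cong\SSSS_5$ is $\Z/2$ and $\stab(D)\cong\AAAA_7$ (Theorem~\ref{thm:T7}) is simple, every homomorphism from $\stab(Q)$ or $\stab(D)$ to $\PGU_3(\F_{25})/\PSU_3(\F_{25})\cong\Z/3$ is trivial; hence both stabilizers lie in $\PSU_3(\F_{25})$. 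Second, the edge set of $H$ is defined $\Aut(\Gamma)$-invariantly, so $\{\CCC_1,\CCC_2,\CCC_3\}$ is a block system for the transitive action on $\DDD$, and $\PGU_3(\F_{25})$ permutes the three components; as $\PSU_3(\F_{25})$ is the unique proper nontrivial normal subgroup, it is the kernel of this permutation action and therefore stabilizes each $\CCC_i$, and hence each $\wt{\CCC}_i$, setwise.

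For the statements about $\RRR(Q)$, I would fix one conic $Q\in\QQQ$. Because $\RRR$ is a symmetric relation (both defining conditions $|Q\cap Q\sprime\cap\Gamma|=2$ and $|\SSS(Q)\cap\SSS(Q\sprime)|=1$ are symmetric in $Q,Q\sprime$), the set $\RRR(Q)$ is the union of those $\stab(Q)$-orbits on $\QQQ\setminus\{Q\}$ whose invariant pair $(\,|Q\cap Q\sprime\cap\Gamma|,\,|\SSS(Q)\cap\SSS(Q\sprime)|\,)$ equals $(2,1)$. Summing the sizes of these orbits from Table~\ref{table:IP} yields $|\RRR(Q)|=45$, and transitivity of $\Aut(\Gamma)$ on $\QQQ$ propagates this to every $Q$. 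Since $\stab(Q)\subset\PSU_3(\F_{25})$ preserves each $\wt{\CCC}_i$, each intersection $\RRR(Q)\cap\wt{\CCC}_i$ is itself a union of $\stab(Q)$-orbits; tallying these orbits against the partition gives $|\RRR(Q)\cap\wt{\CCC}_i|=15$ for the representative $Q$. As the assertion is symmetric in $i$ and the multiset $\{|\RRR(Q)\cap\wt{\CCC}_i|\}_i$ is $\Aut(\Gamma)$-invariant, the value $15$ then holds for all $Q$ and all $i$.

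For the statements about $\RRR_D$, I would fix $D\in\CCC_1$ and write $\RRR_D=\bigcup_{Q\in D}\RRR(Q)$ as a union of $21$ sets of size $45$. Transitivity of $\Aut(\Gamma)$ on $\DDD$ reduces the computation to this single $D$, the three values $|\RRR_D\cap\wt{\CCC}_i|$ being permuted cyclically as $D$ moves among the $\CCC_i$ while $\stab(D)\subset\PSU_3(\F_{25})$ fixes the label of $D$'s own component. The crux is that the naive total $21\times45=945$ exceeds $735$ by $210$, so the overlaps $\RRR(Q)\cap\RRR(Q\spprime)$ for $Q,Q\spprime\in D$ must be controlled; equivalently, for each $Q\sprime\in\QQQ$ one computes the multiplicity $|D\cap\RRR(Q\sprime)|$ (using symmetry of $\RRR$), whose sum over $Q\sprime$ is $945$ and whose support has cardinality $735$. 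I would organize this through the $T(7)$-structure of $D$ together with the type data $t(Q\sprime,D)$, but in practice it is cleanest to read off the distribution of these multiplicities directly from the explicit coordinate list of $\QQQ$. Refining by $\wt{\CCC}_i$ then gives $|\RRR_D\cap\wt{\CCC}_1|=105$ and $|\RRR_D\cap\wt{\CCC}_j|=315$ for $j\ne 1$.

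The main obstacle is exactly this overlap bookkeeping for $\RRR_D$: unlike $|\RRR(Q)|$, the number $|\RRR_D|$ is not a sum of orbit sizes and is invisible to the symmetry argument alone, so it demands genuine control of how the $21$ sets $\RRR(Q)$ intersect. Establishing the precise orbit sizes underlying Table~\ref{table:IP}, which feed the per-conic count, is the secondary computational burden; both are finite and are verified against the list $\QQQ$.
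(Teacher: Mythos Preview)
Your proposal is correct and, like the paper, ultimately rests on direct computational verification against the explicit list $\QQQ$; the paper simply states that the proofs in Section~\ref{subsec:RRRTTT} are analogous to the earlier computational checks and omits the details. Your additional group-theoretic scaffolding---that $\stab(Q),\stab(D)\subset\PSU_3(\F_{25})$ and that $\PSU_3(\F_{25})$ is the kernel of the action on $\{\CCC_1,\CCC_2,\CCC_3\}$---is correct and pleasantly reduces the verification to one representative and a handful of orbit checks, but it does not change the essential nature of the argument.
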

%
%
The sets $\RRR_D$ determine the edges of $H\sprime$ by the following:
\begin{proposition}\label{prop:RRR2}
Two distinct vertices   $D, D\sprime\in \DDD$ are adjacent in $H\sprime$
if and only if $\RRR_D$ and $D\sprime$ are not disjoint in $\QQQ$.
More precisely, we have
$$
|\RRR_D\cap D\sprime|=\begin{cases}
 21& \textrm{if $T(D, D\sprime)=\beta^{21}$, }\\
 15& \textrm{if $T(D, D\sprime)=\alpha^{15}\gamma^6$,}\\
 0& \textrm{if $T(D, D\sprime)=\gamma^{21}$ or $\alpha^{3}\gamma^{18}$}.
\end{cases}
$$
\end{proposition}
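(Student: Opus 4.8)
The plan is to deduce the ``if and only if'' assertion from the displayed table of values, and to obtain that table by combining the counts of Proposition~\ref{prop:RRR} with the type information already established. For distinct $D, D\sprime\in\DDD$ set
$$
s(Q, Q\sprime):=|\SSS(Q)\cap\SSS(Q\sprime)|.
$$
Since both conditions defining $\RRR$ are symmetric in $Q, Q\sprime$, a conic $Q\sprime\in D\sprime$ lies in $\RRR_D$ if and only if some $Q\in D$ is doubly tangential to $Q\sprime$ with $s(Q, Q\sprime)=1$; by Proposition~\ref{prop:geomD} no two conics of a single $D$ meet on $\Gamma$, so every doubly tangential partner of $Q\sprime$ lies outside $D\sprime$, and we are counting those that fall in $D$. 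By Proposition~\ref{prop:symtypeT} the distribution of the local type $t(Q\sprime, D)$ over $Q\sprime\in D\sprime$ is governed by $T(D, D\sprime)=T(D\sprime, D)$ via Proposition~\ref{prop:typeT}, and by Proposition~\ref{prop:typet} a conic of type $\alpha$, $\beta$, $\gamma$ has respectively $3$, $1$, $0$ doubly tangential partners in $D$. Recall finally that ``adjacency in $H\sprime$'' means exactly $T(D, D\sprime)\in\{\beta^{21}, \alpha^{15}\gamma^6\}$ by the definition of $H\sprime$ in Theorem~\ref{thm:mainHgSm}; thus the whole proposition follows once the four tabulated values are known.

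Two of the four values require no geometry. If $T(D, D\sprime)=\gamma^{21}$, then every $Q\sprime\in D\sprime$ has $t(Q\sprime, D)=\gamma$, hence no doubly tangential partner in $D$, and $|\RRR_D\cap D\sprime|=0$. If $T(D, D\sprime)=\alpha^{15}\gamma^6$, then $D$ and $D\sprime$ lie in a common component $\CCC_i$ of $H$ (Proposition~\ref{prop:TDD}) and, by symmetry, $15$ conics $Q\sprime\in D\sprime$ have type $\alpha$ and $6$ have type $\gamma$, so there are exactly $15\times 3=45$ doubly tangential pairs $(Q, Q\sprime)$ with $Q\in D$, $Q\sprime\in D\sprime$. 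On the other hand $\stab(D)\cong\AAAA_7$ permutes the seven such neighbours transitively (they correspond to $\Conj_{\AAAA}(\Sigma_a)$ by Theorem~\ref{thm:A7}), while the $42$ same-component neighbours of type $\gamma^{21}$ (Proposition~\ref{prop:symtypeT}) contribute no doubly tangential pairs; summing $|\RRR(Q)\cap\wt\CCC_i|=15$ (Proposition~\ref{prop:RRR}) over $Q\in D$ gives $21\times 15=315$ ordered pairs with $s=1$ inside $\wt\CCC_i$, all concentrated on those seven neighbours, hence $315/7=45$ per neighbour. As this equals the total number of doubly tangential pairs per neighbour, \emph{all} of them have $s=1$; so every type-$\alpha$ conic $Q\sprime$ lies in $\RRR_D$ and $|\RRR_D\cap D\sprime|=15$.

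The remaining two values are coupled by a single counting identity. Comparing $|\RRR_D\cap\wt\CCC_j|=315$ for a foreign component $\CCC_j$ (Proposition~\ref{prop:RRR}) with $\sum_{Q\in D}|\RRR(Q)\cap\wt\CCC_j|=21\times 15=315$ shows that each $s=1$ partner outside the component of $D$ is doubly tangential to a \emph{unique} $Q\in D$; in particular a type-$\alpha$ conic in an $\alpha^3\gamma^{18}$ neighbour has at most one $s=1$ partner in $D$. Writing $x_\beta$ and $x_\alpha$ for the two unknown values and using $15$ neighbours of type $\beta^{21}$ in $\CCC_j$ (Proposition~\ref{prop:twobb21}, with $|\Conj_{\AAAA}(\Sigma_b)|=15$) and $50-15=35$ of type $\alpha^3\gamma^{18}$, all values being uniform by transitivity of $\stab(D)$ on each $\Conj_{\AAAA}$-class, the total $315$ over $\CCC_j$ gives
$$
15\,x_\beta+35\,x_\alpha=315,\qquad x_\beta\le 21,\quad x_\alpha\le 3.
$$
The only admissible integer solutions are $(x_\beta, x_\alpha)=(21,0)$ and $(14,3)$, so counting alone does not finish the proof.

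The main obstacle is therefore to rule out $(14,3)$, i.e. to prove the single geometric fact that the doubly tangential partner arising in a $\beta^{21}$ relation has $s=1$ (equivalently, every doubly tangential pair coming from an $\alpha^3\gamma^{18}$ relation has $s=3$). To attack this I would make $s$ explicit as a count of common special secants: writing $Q\cap\Gamma=\{P_1, P_2\}\cup A$ and $Q\sprime\cap\Gamma=\{P_1, P_2\}\cup B$ with $|A|=|B|=4$ and $A\cap B=\emptyset$, a line lies in $\SSS(Q)\cap\SSS(Q\sprime)$ precisely when it is a special secant meeting $Q\cap\Gamma$ and $Q\sprime\cap\Gamma$ each in two points; the line $P_1P_2$ always qualifies, so $s\ge 1$, and $s=3$ means exactly two further common secants occur, each necessarily carrying points of both $A$ and $B$. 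This turns the key fact into a finite incidence computation on the twelve points $\{P_1, P_2\}\cup A\cup B$ and the special secant lines joining them, which I would settle on one representative of each $\stab(Q)\cong\SSSS_5$--orbit of doubly tangential partners using the explicit Fermat model and the computed list $\QQQ$; this is precisely the bookkeeping recorded in Table~\ref{table:IP}. I expect this orbit-by-orbit secant analysis to be the genuinely hard part, since the distinction between $\beta^{21}$ and $\alpha^3\gamma^{18}$ is invisible to the coarse ``same/different component'' dichotomy and seems to admit no purely combinatorial shortcut. Once $(x_\beta, x_\alpha)=(21,0)$ is secured, the four tabulated values are in hand and the equivalence ``adjacent in $H\sprime$ $\Leftrightarrow$ $\RRR_D\cap D\sprime\neq\emptyset$'' is read off immediately.
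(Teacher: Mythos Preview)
Your approach is genuinely different from the paper's: the paper simply says the proof is ``analogous'' to the earlier direct computer verifications and omits it entirely, whereas you extract real structure from the group action and the numerical coincidences in Proposition~\ref{prop:RRR}. Your treatment of the cases $T(D,D\sprime)=\gamma^{21}$ and $\alpha^{15}\gamma^6$ is clean and correct; in particular, the observation that $\sum_{Q\in D}|\RRR(Q)\cap\wt\CCC_i|=315$ matches the $45$ doubly tangential pairs per $\alpha^{15}\gamma^6$-neighbour, forcing all of them to have $s=1$, is a nice argument that the paper does not isolate.

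The gap is in the final step. Your appeal to Table~\ref{table:IP} does not suffice: that table classifies the $\stab(Q_1)$-orbits of doubly tangential partners by $(a,s,n)$, but it does not record which $D\spprime$ each partner lies in, nor the type $T(D_1,D\spprime)$, so it cannot by itself separate $\beta^{21}$ from $\alpha^3\gamma^{18}$. You would still have to carry out the extra computation you allude to, linking those orbits to the list $\Dlist$ and the matrix $\Tmat$.

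In fact your own method finishes the job without any computation, and the ``purely combinatorial shortcut'' you doubted exists. By Theorem~\ref{thm:A7} (applied with the roles of $D$ and $D\spprime$ reversed), when $T(D,D\spprime)=\beta^{21}$ the subgroup $\stab(D)\cap\stab(D\spprime)$ sits inside $\stab(D\spprime)\cong\AAAA_7$ as a conjugate of $\Sigma_b\cong\PSL_3(\F_2)$. Under the identification $D\spprime\cong T(7)$ of Theorem~\ref{thm:T7}, the $21$ conics of $D\spprime$ are the unordered pairs from $\{1,\dots,7\}\cong\P^2(\F_2)$, on which $\PSL_3(\F_2)$ acts transitively (it is $2$-transitive on points). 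Since the predicate ``$Q\sprime\in\RRR_D$'' is invariant under $\stab(D)\cap\stab(D\spprime)$, this forces $x_\beta\in\{0,21\}$; your equation $3x_\beta+7x_\alpha=63$ with $0\le x_\alpha\le 3$ then leaves only $(x_\beta,x_\alpha)=(21,0)$. This is exactly the transitivity device you already used for $\Sigma_a$, pushed one step further to $\Sigma_b$.
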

We can refine Proposition~\ref{prop:RRR2} for  edges of $H\sprime$ not contained in a connected component of $H$. 
For a vertex $D\in \DDD$,
we denote by $N\sprime (D)$ the set of vertices
adjacent to $D$ in the graph $H\sprime$.
If $D\in \CCC_i$ and $D\sprime\in N\sprime (D)\cap \CCC_j$ with $i\ne j$,
we have 
 $T(D, D\sprime)=\beta^{21}$,
 and hence, 
by definition of the type $\beta^{21}$ and Proposition~\ref{prop:symtypeT},
we can define   a unique  bijection $f_{D, D\sprime}: D\to D\sprime$ by requiring 
$|Q\cap f_{D, D\sprime}(Q)\cap \Gamma|=2$ for any $Q\in D$.
\begin{proposition}
If  $D\in \CCC_i$, $D\sprime\in \CCC_j$ with $i\ne j$ and $T(D, D\sprime)=\beta^{21}$, 
the conic $f_{D, D\sprime} (Q)$ belongs to $ \RRR(Q)$  for any $Q\in D$.
\end{proposition}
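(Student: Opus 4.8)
The plan is to deduce the statement from the counting identity recorded in Proposition~\ref{prop:RRR2}, rather than from any fresh geometric computation. Fix $Q\in D$ and set $Q\sprime:=f_{D, D\sprime}(Q)$. By the definition of the bijection $f_{D, D\sprime}$ we have $|Q\cap Q\sprime\cap\Gamma|=2$, so the first defining condition of $\RRR(Q)$ holds automatically; the entire content of the proposition is thus the single assertion that $|\SSS(Q)\cap\SSS(Q\sprime)|=1$. Since $\{Q, Q\sprime\}$ is a doubly tangential pair, this intersection number is a priori either $1$ or $3$, so it is enough to exclude the value $3$, and I will do so for all $Q\in D$ simultaneously.

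First I would rewrite the set $\RRR_D\cap D\sprime$ as a pointwise condition along $f_{D, D\sprime}$. By Proposition~\ref{prop:symtypeT} we have $T(D\sprime, D)=T(D, D\sprime)=\beta^{21}$, so $t(Q\sprime, D)=\beta$ for every $Q\sprime\in D\sprime$; the $\beta$-case of Proposition~\ref{prop:typet} then says that exactly one conic $Q\in D$ satisfies $|Q\cap Q\sprime\cap\Gamma|=2$. Since the relation $|Q\cap Q\sprime\cap\Gamma|=2$ is symmetric in $Q$ and $Q\sprime$, this unique conic is precisely $f_{D, D\sprime}\inv(Q\sprime)$. For every other $Q\in D$ one has $|Q\cap Q\sprime\cap\Gamma|\le 1$, hence such a $Q\sprime$ cannot belong to $\RRR(Q)$, whose very definition requires $|Q\cap Q\sprime\cap\Gamma|=2$. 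Therefore a conic $Q\sprime\in D\sprime$ lies in $\RRR_D=\bigcup_{Q\in D}\RRR(Q)$ if and only if $Q\sprime\in\RRR(f_{D, D\sprime}\inv(Q\sprime))$, that is, if and only if $|\SSS(f_{D, D\sprime}\inv(Q\sprime))\cap\SSS(Q\sprime)|=1$.

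This equivalence turns $|\RRR_D\cap D\sprime|$ into the exact number of conics $Q\in D$ with $|\SSS(Q)\cap\SSS(f_{D, D\sprime}(Q))|=1$. Proposition~\ref{prop:RRR2} asserts that, when $T(D, D\sprime)=\beta^{21}$, this cardinality equals $21$, which is all of $|D|$. Consequently $|\SSS(Q)\cap\SSS(f_{D, D\sprime}(Q))|=1$ for every $Q\in D$; the value $3$ is ruled out throughout, and combined with $|Q\cap f_{D, D\sprime}(Q)\cap\Gamma|=2$ this places $f_{D, D\sprime}(Q)$ in $\RRR(Q)$, as claimed.

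The step that needs the most care is the uniqueness bookkeeping in the middle paragraph: one must verify that no $Q\sprime\in D\sprime$ is counted in $\RRR_D$ through a conic $Q$ different from its doubly tangential partner, and this is exactly where the $\beta$-type, in which each $Q\sprime$ has a single doubly tangential conic in $D$, is indispensable. Once that translation is in place, the argument is purely formal given Proposition~\ref{prop:RRR2}.
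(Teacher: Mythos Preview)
Your argument is correct. The key observation---that for a pair with $T(D,D\sprime)=\beta^{21}$ each $Q\sprime\in D\sprime$ has a \emph{unique} doubly tangential partner in $D$, so membership of $Q\sprime$ in $\RRR_D$ is tested against a single conic---is exactly what makes the counting from Proposition~\ref{prop:RRR2} decisive, and your bookkeeping there is clean.

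The route, however, differs from the paper's. The paper treats all the results of \S\ref{subsec:RRRTTT}, including Proposition~\ref{prop:RRR2} and the present proposition, as parallel computational verifications from the explicit lists $\Qlist$, $\SQlist$, $\Dlist$ and the matrices $M_0$, $M_1$; it simply says these proofs are analogous to the earlier ones and omits them. Your approach instead derives this proposition \emph{formally} from Proposition~\ref{prop:RRR2}, using only the combinatorics of the $\beta$-type (Propositions~\ref{prop:typet} and~\ref{prop:symtypeT}) and the definition of $f_{D,D\sprime}$. The gain is that no separate machine check is needed once Proposition~\ref{prop:RRR2} is established; the cost is that your argument depends on that proposition, so it does not stand on its own. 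Since the paper verifies Proposition~\ref{prop:RRR2} directly and independently, there is no circularity, and your deduction is a genuine economy.
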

\begin{proposition}\label{prop:bb21}
For $D\in \CCC_i$ and $j\ne i$,  the map 
$(Q, D\sprime) \mapsto f_{D, D\sprime} (Q)$
induces a bijection
from  $D\times (N\sprime (D)\cap \CCC_j)$
to $\RRR_D\cap \wt{\CCC}_j$.
In other words, 
if $D\in \CCC_i$ and $j\ne i$, then $\RRR_D\cap \wt{\CCC}_j$
is a disjoint union of $D\sprime$, where $D\sprime$ runs through $N\sprime (D) \cap \CCC_j$.
\end{proposition}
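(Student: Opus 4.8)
The plan is to reduce the whole statement to a cardinality count, exploiting the disjointness of distinct members of $\DDD$ as subsets of $\QQQ$ together with the counts already established in Propositions~\ref{prop:RRR} and~\ref{prop:twobb21}.

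First I would identify the set $N\sprime(D)\cap\CCC_j$ explicitly. Fix $D\in\CCC_i$ and $j\ne i$, and let $D\sprime\in\CCC_j$. Since $D$ and $D\sprime$ lie in different connected components of $H$, Proposition~\ref{prop:TDD} forces $T(D,D\sprime)\in\{\beta^{21},\alpha^3\gamma^{18}\}$, whereas adjacency in $H\sprime$ requires $T(D,D\sprime)\in\{\beta^{21},\alpha^{15}\gamma^6\}$ by the definition of $H\sprime$. Hence $D\sprime\in N\sprime(D)\cap\CCC_j$ if and only if $T(D,D\sprime)=\beta^{21}$; in particular $N\sprime(D)\cap\CCC_j=\NNN_D\cap\CCC_j$, which by Proposition~\ref{prop:twobb21} and the Remark following it has exactly $15$ elements. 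In particular $f_{D,D\sprime}$ is defined for every $D\sprime\in N\sprime(D)\cap\CCC_j$.

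Next I would show that each such $D\sprime$ is entirely contained in $\RRR_D\cap\wt{\CCC}_j$. Since $f_{D,D\sprime}\colon D\to D\sprime$ is a bijection, every conic of $D\sprime$ has the form $f_{D,D\sprime}(Q)$ for some $Q\in D$; by the preceding proposition this conic lies in $\RRR(Q)\subset\RRR_D$, and trivially it lies in $\wt{\CCC}_j$. Thus $D\sprime\subset\RRR_D\cap\wt{\CCC}_j$. As distinct elements of $\DDD$ are disjoint subsets of $\QQQ$, the $15$ sets $D\sprime$ are pairwise disjoint, so their union is contained in $\RRR_D\cap\wt{\CCC}_j$ and has cardinality $15\times|D|=15\times21=315$. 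By Proposition~\ref{prop:RRR} we also have $|\RRR_D\cap\wt{\CCC}_j|=315$, so the containment is an equality. This is exactly the disjoint-union reformulation, that $\RRR_D\cap\wt{\CCC}_j=\bigsqcup_{D\sprime\in N\sprime(D)\cap\CCC_j}D\sprime$. Bijectivity of $(Q,D\sprime)\mapsto f_{D,D\sprime}(Q)$ then follows formally: the map is well defined into $\RRR_D\cap\wt{\CCC}_j$ by the above, its domain has $|D|\times|N\sprime(D)\cap\CCC_j|=21\times15=315$ elements matching the codomain, and it is injective because if $f_{D,D_1\sprime}(Q_1)=f_{D,D_2\sprime}(Q_2)$ then the common conic lies in $D_1\sprime\cap D_2\sprime$, forcing $D_1\sprime=D_2\sprime$ by disjointness and then $Q_1=Q_2$ by injectivity of the single bijection $f_{D,D_1\sprime}$.

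The only place carrying genuine content, rather than bookkeeping, is the pair of inputs $|\RRR_D\cap\wt{\CCC}_j|=315$ and $|N\sprime(D)\cap\CCC_j|=15$: once these numbers are in hand the argument is forced, since $315=15\times21$ leaves no slack. I therefore expect no serious obstacle beyond verifying that $\bigsqcup D\sprime$ sits inside $\RRR_D\cap\wt{\CCC}_j$ as an inclusion of the \emph{full} sets $D\sprime$ (which is where the surjectivity of each $f_{D,D\sprime}$ onto $D\sprime$ is essential), the remainder being a clean matching of cardinalities.
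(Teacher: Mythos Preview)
Your argument is correct, but it differs from what the paper does. The paper establishes all of the results in \S\ref{subsec:RRRTTT} (including this one) by direct computer verification from the precomputed lists and matrices; it says only that ``the proofs in Sections~\ref{subsec:SQ} and~\ref{subsec:RRRTTT} are analogous and we omit the details.'' You instead give a conceptual counting argument, deducing the statement from Propositions~\ref{prop:TDD}, \ref{prop:RRR}, and the (unnamed) proposition immediately preceding Proposition~\ref{prop:bb21}, together with the disjointness of distinct members of $\DDD$ inside $\QQQ$. This is more informative than a bare computer check, since it makes transparent \emph{why} the numbers $21\times 15=315$ force the bijection.

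One small point about your citation for $|N\sprime(D)\cap\CCC_j|=15$: Proposition~\ref{prop:twobb21} and the subsequent Remark tell you which $\AAAA_7$-conjugacy class the intersection $\stab(D)\cap\stab(D\sprime)$ lies in and that each class has $15$ members, but they do not literally assert that $D\sprime\mapsto\stab(D)\cap\stab(D\sprime)$ is a bijection onto that class. Your argument does not actually need this, however: the inclusion $\bigsqcup D\sprime\subset\RRR_D\cap\wt{\CCC}_j$ already gives $21\cdot|N\sprime(D)\cap\CCC_j|\le 315$, hence $|N\sprime(D)\cap\CCC_j|\le 15$ and likewise for $\CCC_k$; since the two pieces sum to $|\NNN_D|=30$ by Proposition~\ref{prop:symtypeT}, both equal $15$. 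So the count is in fact forced by the very inequalities you set up.
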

\section{The list of totally tangent conics}\label{sec:crossratio}
In this section,
we give a method to calculate the list of conics totally tangent to a Hermitian curve 
in odd characteristic,
based on the results in~\cite[n.~81]{MR0213949} and~\cite{MR3092762}.
\par
\smallskip
Let $p$ be an odd prime, and $q$ a power of $p$.
We work in  characteristic $p$.
Let ${\Gamma_q}$ denote the Fermat curve
$$
x^{q+1} + y^{q+1} + z^{q+1}=0
$$
of degree $q+1$.
By~\cite{MR847097} and~\cite{MR1373551},
we see that, for a point $P$ of ${\Gamma_q}$,
the following  are equivalent;
\begin{itemize}
\item[(i)] the tangent line  of ${\Gamma_q}$ at $P$
intersects ${\Gamma_q}$ at $P$ with   multiplicity $q+1$, 
\item[(ii)] $P$ is an $\F_{q^2}$-rational point of ${\Gamma_q}$, and
\item[(iii)] $P$ is a Weierstrass point of $\Gamma_q$.
\end{itemize}
Let ${\PPP_q}$ denote the set of points of ${\Gamma_q}$ satisfying (i), (ii) and (iii).
By (ii), we have $|{\PPP_q}|=q^3+1$.
A non-singular conic $Q$ is said to be \emph{totally tangent to ${\Gamma_q}$} if $Q$ intersects ${\Gamma_q}$ at $q+1$ distinct  
points with intersection  multiplicity $2$.
Let ${\QQQ_q}$ denote the set of  non-singular conics totally tangent to ${\Gamma_q}$.
In~\cite[n.~81]{MR0213949} and~\cite{MR3092762},
the following was  proved:
\begin{proposition}\label{prop:Q}
{\rm (1)}
The automorphism group $\Aut({\Gamma_q})=\PGU_3(\F_{q^2})$ of ${\Gamma_q}$ acts 
on ${\QQQ_q}$ transitively, and 
the stabilizer subgroup 
of $Q\in {\QQQ_q}$ in $\Aut({\Gamma_q})$ 
is isomorphic to $\PGL_2(\F_q)$.
In particular, we have $|{\QQQ_q}|=q^2(q^3+1)$.

{\rm (2)}
For any $Q\in {\QQQ_q}$, we have $Q\cap {\Gamma_q}\subset {\PPP_q}$.

{\rm (3)}
Let $P_0, P_1, P_2$ be three distinct  points of ${\PPP_q}$.
If there exists a non-singular conic $Q$ that is tangent to ${\Gamma_q}$ at $P_0, P_1, P_2$,
then $Q\in {\QQQ_q}$.
\end{proposition}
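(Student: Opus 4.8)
The plan is to derive everything from a single explicit model in which a totally tangent conic and its symmetry group appear at once. By transitivity of $\PGL_3(k)$ on $k$-Hermitian curves I may normalize coordinates as I like, so I take the Veronese parametrization $\nu\colon \P^1\to\P^2$, $[s:t]\mapsto(s^2:st:t^2)$, with image the smooth conic $C=\{\,y^2=xz\,\}$, and I let $M$ be the symmetric matrix of the quadratic form $y^2-xz$. I then realize ${\Gamma_q}$ through the Hermitian form $h(u,v)=\sum_{j,k} (u_j)^q M_{jk} v_k$ (writing $R^{(q)}$ for the coordinatewise $q$-th power), whose isotropic locus is a $k$-Hermitian curve. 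A direct substitution gives $h(\nu(s,t),\nu(s,t))=c\,(s^qt-st^q)^2$ with $c\neq 0$, and $s^qt-st^q$ has exactly the $q+1$ distinct roots $\P^1(\F_q)$. Hence $C$ meets ${\Gamma_q}$ in the $q+1$ distinct points $\nu(\P^1(\F_q))$, each with multiplicity $2$; that is, $C\in{\QQQ_q}$, and its contact points already lie in ${\PPP_q}$ (they are even $\F_q$-rational). This settles existence and confirms part~(2) for $C$.

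Next I build the stabilizer. The second symmetric power sends $g\in\mathrm{GL}_2(\F_q)$ to a matrix with entries in $\F_q$ preserving $y^2-xz$ up to a scalar in $\F_q^\times$; since the norm $\F_{q^2}^\times\to\F_q^\times$ is surjective, each such matrix can be rescaled into $\mathrm{GU}_3(\F_{q^2})$. This yields an injection $\PGL_2(\F_q)\hookrightarrow\PGU_3(\F_{q^2})$ whose image lies in $\Stab(C)$ and acts on $C\cap{\Gamma_q}=\nu(\P^1(\F_q))$ exactly as $\PGL_2(\F_q)$ acts on $\P^1(\F_q)$, i.e.\ sharply $3$-transitively. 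Conversely any $g\in\Stab_{\PGU_3}(C)$ preserves $C\cap{\Gamma_q}$, so viewed in $\Aut(C)=\PGL_2(k)$ it stabilizes the subset $\P^1(\F_q)$; as a nontrivial element of $\PGL_2(k)$ fixes at most two points, the standard argument gives $\Stab_{\PGL_2(k)}(\P^1(\F_q))=\PGL_2(\F_q)$. Thus $\Stab(C)\cong\PGL_2(\F_q)$, and the orbit of $C$ has size $|\PGU_3(\F_{q^2})|/|\PGL_2(\F_q)|=q^2(q^3+1)$. Part~(1) will follow once I prove that $\PGU_3(\F_{q^2})$ is transitive on ${\QQQ_q}$.

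For an arbitrary $Q\in{\QQQ_q}$ I use two reformulations. Parametrizing $Q$ by $\nu_Q\colon\P^1\to Q$ with quadratic coordinate functions, the scheme $Q\cap{\Gamma_q}$ is cut out on $\P^1$ by the binary form $\Psi_Q=h(\nu_Q,\nu_Q)$ of degree $2(q+1)$, so $Q$ is totally tangent iff $\Psi_Q$ is the square of a squarefree form of degree $q+1$. Moreover, writing $S$ for the matrix of $Q$, the conic $Q$ is tangent to ${\Gamma_q}$ at a point $R$ precisely when its polar $SR$ and the ${\Gamma_q}$-polar $M R^{(q)}$ coincide, i.e.\ $SR=\lambda\,M R^{(q)}$. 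Part~(2) is the assertion that each contact point $R$ is $\F_{q^2}$-rational, and part~(3) is the rigidity that tangency at three points already forces total tangency. I would prove~(3) by moving an admissible triple $P_0,P_1,P_2$ into normal position (via $2$-transitivity of $\PGU_3(\F_{q^2})$ on ${\PPP_q}$ together with the sharply $3$-transitive action found above), writing down the double-contact pencil $\lambda\,\ell_0\ell_1+\mu\,m^2$ of conics tangent to ${\Gamma_q}$ along the chord $m=\overline{P_0P_1}$, imposing tangency at $P_2$, and checking on the resulting conic that $\Psi_Q$ becomes a perfect square with $q+1$ distinct roots; this is the computation carried out in~\cite[n.~81]{MR0213949}, and it simultaneously shows that exactly $q+1$ totally tangent conics are tangent to ${\Gamma_q}$ at a fixed pair of distinct Weierstrass points. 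A clean auxiliary input, proved directly, is that no \emph{singular} conic is tangent to ${\Gamma_q}$ at three non-collinear points of ${\PPP_q}$: a line-pair or double line tangent at three such points would force a single line to be tangent to ${\Gamma_q}$ at two distinct Weierstrass points, which is impossible because the tangent line at a Weierstrass point meets ${\Gamma_q}$ only there. Since every pencil of conics contains a singular member, this shows that three contact points determine their totally tangent conic uniquely.

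Granting~(2) and the count $q+1$ just described, transitivity follows by a double count. By~(2) every $Q\in{\QQQ_q}$ has its $q+1$ contact points in ${\PPP_q}$, and no three of them are collinear since they lie on the conic $Q$; moreover any two distinct Weierstrass points lie on a common secant. Counting pairs $(Q,(R,R'))$ with $R\neq R'$ contact points of $Q$ in two ways gives $|{\QQQ_q}|\,(q+1)q=(q^3+1)q^3\,(q+1)$, whence $|{\QQQ_q}|=q^2(q^3+1)$. As the orbit of $C$ is contained in ${\QQQ_q}$ and has exactly this cardinality, it is all of ${\QQQ_q}$, proving transitivity and completing part~(1). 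The main obstacle is the arithmetic rigidity underlying~(2) and~(3): purely local analysis shows that a conic can have contact of order $2$ with ${\Gamma_q}$ at a non-Weierstrass point, so forcing the contact points to be $\F_{q^2}$-rational, and forcing $\Psi_Q$ to be a perfect square, is a genuinely global phenomenon. It hinges on the special arithmetic of the Hermitian curve—its Gauss map is purely inseparable of degree $q$, so the tangent line meets ${\Gamma_q}$ with multiplicity $q+1$ exactly at the $\F_{q^2}$-rational points—and I expect the cleanest route to this core to be the explicit cross-ratio computation developed in Section~\ref{sec:crossratio}, following Segre.
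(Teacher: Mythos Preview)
The paper does not prove this proposition; it simply cites Segre~\cite[n.~81]{MR0213949} and~\cite{MR3092762}. Your sketch therefore supplies substantially more than the paper itself does. The model via the Veronese map together with the Hermitian form attached to the matrix of $y^2-xz$ is correct---indeed $h(\nu(s,t),\nu(s,t))=-\tfrac12(s^qt-st^q)^2$---and it cleanly exhibits one totally tangent conic with stabilizer exactly $\PGL_2(\F_q)$, the upper bound coming from the standard fact $\Stab_{\PGL_2(k)}(\P^1(\F_q))=\PGL_2(\F_q)$. You rightly locate the genuine content in~(2) and~(3) and defer these to Segre, just as the paper does. One point of logical order worth flagging: your double count for $|{\QQQ_q}|$ presupposes~(2), while~(2) would equally follow \emph{from} transitivity combined with your explicit model; the non-circular input that breaks this loop is Segre's computation (equivalently the cross-ratio criterion of Proposition~\ref{prop:crossratio}), which establishes~(2) and~(3) independently, after which your orbit already has the correct size and transitivity is immediate.
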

A set $\{p_1, \dots, p_m\}$ of $m$ points in ${\PPP_q}$
with $m\ge 3$
is said to be a \emph{co-conical set of $m$ points}
if there exists $Q\in {\QQQ_q}$ such that
$$
\{p_1, \dots, p_m\} \subset Q\cap {\Gamma_q}.
$$
Then the set ${\QQQ_q}$
is identified with 
the set of co-conical sets of $q+1$ points via
$Q\mapsto Q\cap {\Gamma_q}$.
It is obvious that
any co-conical set of $q+1$ points $Q\cap {\Gamma_q}$
is a union of co-conical sets of $3$ points contained in $Q\cap {\Gamma_q}$, and 
the assertion (3) of Proposition~\ref{prop:Q} implies that
any co-conical set of $3$ points is contained in a unique 
co-conical set of $q+1$ points.
Therefore the following proposition,
which characterizes co-conical sets of $3$ points,  enables us to calculate ${\QQQ_q}$
efficiently.
%
%
\begin{proposition}\label{prop:crossratio}
Let $P_0, P_1, P_2$ be non-collinear three points of ${\PPP_q}$,
and let
$$
P_i=(a_i: b_i : c_i)
$$
be the homogeneous coordinates
of $P_i$ with $a_i, b_i, c_i\in \F_{q^2}$.
We put 
$$
\kappa_{ij}:=a_i\bar{a}_j + b_i \bar{b}_j +c_i\bar{c}_j\in \F_{q^2},
$$
where $\bar{x}:=x^q$ for $x\in \F_{q^2}$.
Then there exists a non-singular conic $Q$ that is tangent to  ${\Gamma_q}$ at $P_i$ 
for $i=0,1,2$
if and only if  
$\kappa_{12}\kappa_{23}\kappa_{31}$ is a non-zero element of $\F_q$.
\end{proposition}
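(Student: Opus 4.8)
The plan is to translate the three tangency conditions into linear algebra for the symmetric matrix of the conic, and to reduce the existence question to a cyclic consistency condition whose sole obstruction is the behaviour of $\Pi:=\kappa_{12}\kappa_{23}\kappa_{31}$ under the Frobenius $x\mapsto\overline{x}=x^q$. Throughout I relabel the three given points as $P_1,P_2,P_3$; the product in the statement is the cyclic one, so the labeling is immaterial. Since $p$ is odd, a non-singular conic $Q$ is the same datum as a symmetric $3\times 3$ matrix $M$ with $\det M\ne 0$, via $Q=\{x^{T}Mx=0\}$, and the tangent line of $Q$ at a point $P\in Q$ is $(MP)^{T}x=0$. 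As $F=x^{q+1}+y^{q+1}+z^{q+1}$ has gradient $(\overline{a},\overline{b},\overline{c})$ at $P=(a:b:c)$ (because $q+1\equiv 1$ in characteristic $p$), the tangent line $l_P$ of $\Gamma_q$ at $P$ is $\overline{P}^{T}x=0$. Hence $Q$ is tangent to $\Gamma_q$ at $P$ exactly when $MP$ is proportional to $\overline{P}$; for non-singular $M$ this reads $MP=\lambda\,\overline{P}$ with $\lambda\ne 0$, and it already forces $P\in Q$, since $P^{T}MP=\lambda\sum_k P_k\overline{P_k}=\lambda\,(a^{q+1}+b^{q+1}+c^{q+1})=0$.

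First I would exploit that $P_1,P_2,P_3$ are non-collinear, i.e.\ linearly independent, so they form a basis of $k^3$ and an operator $M$ is determined by $MP_i=\lambda_i\overline{P_i}$. Writing $B(u,v)=u^{T}Mv$, symmetry of $M$ is equivalent to $B(P_i,P_j)=B(P_j,P_i)$ for all $i,j$. Here $B(P_i,P_j)=\lambda_j\kappa_{ij}$ and $B(P_j,P_i)=\lambda_i\kappa_{ji}=\lambda_i\overline{\kappa_{ij}}$, using the conjugate-symmetry $\kappa_{ji}=\overline{\kappa_{ij}}$; the diagonal relations are automatic because $\kappa_{ii}=a_i^{q+1}+b_i^{q+1}+c_i^{q+1}=0$. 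Thus symmetry of $M$ amounts to the three equations $\lambda_j\kappa_{ij}=\lambda_i\overline{\kappa_{ij}}$ for the pairs $(1,2),(2,3),(3,1)$, and non-singularity of $M$ is equivalent to all $\lambda_i\ne 0$ (the images $\lambda_i\overline{P_i}$ are then a basis, the $\overline{P_i}$ being independent because the Frobenius is a semilinear bijection).

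The crux, and the step I expect to be the main obstacle, is the observation that non-collinearity forces every $\kappa_{ij}\ne 0$. Indeed $(\kappa_{ij})$ is the Gram matrix, in the basis $\{P_i\}$, of the nondegenerate Hermitian form $h(u,v)=\sum_k u_k\overline{v_k}$, so $\det(\kappa_{ij})\ne 0$; but all diagonal entries vanish, and a one-line cofactor expansion shows that if a single off-diagonal $\kappa_{ij}$ were zero then $\det(\kappa_{ij})$ would vanish, a contradiction. (Without this remark one would have to rule out the degenerate cases by hand, and in fact a naive attempt to solve the system when some $\kappa_{ij}=0$ produces a formally symmetric invertible $M$ whose conic contains the line $\overline{P_iP_j}$ — impossible for a smooth conic — which is exactly the signal that such a configuration cannot arise from non-collinear points.) Granting all $\kappa_{ij}\ne 0$, each equation determines the ratio $\lambda_j/\lambda_i=\overline{\kappa_{ij}}/\kappa_{ij}$.

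Finally I would read the three equations as a homogeneous linear system in $(\lambda_1,\lambda_2,\lambda_3)$ and compute its coefficient determinant to be $\Pi-\overline{\Pi}$, where $\Pi=\kappa_{12}\kappa_{23}\kappa_{31}$; equivalently, multiplying the three ratios $\lambda_j/\lambda_i=\overline{\kappa_{ij}}/\kappa_{ij}$ around the cycle gives $\overline{\Pi}/\Pi=1$. Hence a nonzero solution exists if and only if $\Pi=\overline{\Pi}$, that is, if and only if $\Pi\in\F_q$; moreover $\Pi\ne 0$ automatically since all $\kappa_{ij}\ne 0$, and any nonzero solution then has all $\lambda_i\ne 0$. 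In that case the operator $M$ with $MP_i=\lambda_i\overline{P_i}$ is symmetric by construction and non-singular, so $Q=\{x^{T}Mx=0\}$ is a non-singular conic tangent to $\Gamma_q$ at $P_1,P_2,P_3$; conversely the existence of such a conic supplies such $\lambda_i$ and forces $\Pi=\overline{\Pi}$. This proves that a tangent conic exists precisely when $\Pi=\kappa_{12}\kappa_{23}\kappa_{31}$ is a nonzero element of $\F_q$.
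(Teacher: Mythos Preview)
Your proof is correct and complete, but it proceeds along a genuinely different route from the paper's argument. The paper invokes a classical projective-geometric criterion: given three non-concurrent lines $L_0,L_1,L_2$ with marked points $P_i\in L_i$, there exists a nonsingular conic tangent to each $L_i$ at $P_i$ if and only if a certain cross-ratio $\gamma(\Delta)$ of four points on $L_0$ equals $1$. The author then performs an explicit change of coordinates sending the triangle $L_0L_1L_2$ to the coordinate triangle and computes $\gamma(\Delta)=\kappa_{12}\kappa_{23}\kappa_{31}/(\kappa_{21}\kappa_{32}\kappa_{13})=\Pi/\overline{\Pi}$, from which the result follows.

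You bypass the cross-ratio entirely and work directly with the symmetric matrix $M$ of the sought conic, encoding the three tangency conditions as $MP_i=\lambda_i\overline{P_i}$ and reading off symmetry of $M$ in the basis $\{P_i\}$ as the cyclic consistency $\overline{\Pi}/\Pi=1$. This is more self-contained: it requires no external lemma from classical incidence geometry, and it makes the role of the Hermitian Gram matrix $(\kappa_{ij})$ transparent. The paper's approach, on the other hand, situates the result in the classical theory of inscribed conics and explains why the answer should be a projective invariant of the triangle. One small remark: your Gram-matrix argument that all $\kappa_{ij}\ne 0$ is correct, but there is an even shorter geometric reason implicit in the paper's setup---$\kappa_{ij}=0$ says $P_i$ lies on the tangent line $L_j$, but $L_j\cap\Gamma_q=\{P_j\}$ for Weierstrass points, forcing $P_i=P_j$.
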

For the proof of Proposition~\ref{prop:crossratio}, 
we recall a classical result of elementary geometry.
Let $(L_0, L_1, L_2)$ be an ordered triple of non-concurrent lines on $\P^2$,
and let $P_0, P_1, P_2$ be  points of $\P^2$ such that
$P_i\in L_i$ and $P_i\notin L_j$ for $i\ne j$.
We denote by $V_i$ the intersection point of $L_j$ and $L_k$, where $i\ne j \ne k\ne i$.
We put
$$
\Delta:=(L_0, L_1, L_2\, |\, P_0, P_1, P_2).
$$
Let $T$ be the intersection point of the lines $V_1 P_1$ and $V_2 P_2$,
and let $R$ be the intersection point of $L_0$ and $V_0T$.
We denote by $\gamma(\Delta)$
the cross-ratio of the ordered four points 
$V_1, V_2, R, P_0$ on $L_0$;
that is,
if $z$ is an affine parameter of $L_0$, then 
$$
\gamma(\Delta):=\frac{(z(V_1)-z(R))(z(V_2)-z(P_0))}{(z(V_1)-z(P_0))(z(V_2)-z(R))}.
$$
It is easy to see that  there exists a non-singular conic 
that is tangent to $L_i$ at $P_i$ for $i=0,1,2$
if and only if $\gamma(\Delta)=1$.
\begin{proof}[Proof of Proposition~\ref{prop:crossratio}]
Let $L_i$ denote the tangent line of ${\Gamma_q}$ at $P_i$, which is defined by
$\bar{a}_i x + \bar{b}_i y +\bar{c}_i z=0$.
Since $P_0, P_1, P_2$ are not collinear,
the lines $L_0, L_1, L_2$ are not concurrent.
Since $L_i\cap {\Gamma_q}=\{P_i\}$, we can consider $\gamma(\Delta)$, where 
$\Delta=(L_0, L_1, L_2|P_0, P_1, P_2)$.
Since $\kappa_{ij}=\bar{\kappa}_{ji}$, 
it is enough to show that
\begin{equation}\label{eq:frackappas}
\gamma(\Delta)=\frac{\kappa_{12}\kappa_{23}\kappa_{31}}{\kappa_{21}\kappa_{32}\kappa_{13}}.
\end{equation}
Let $\vp_i$ denote the column vector ${}^t[a_i, b_i, c_i]$,
and we put
$\bar{\vp}_j:={}^t[\bar{a}_j, \bar{b}_j, \bar{c}_j]$.
We consider the unique  linear transformation $g$ of $\P^2$ that maps $L_0$ to $x=0$,
$L_1$ to $y=0$, $L_2$  to $z=0$,
and $P_1$ to $(1:0:1)$, $P_2$ to $(1:1:0)$.
Then $g$ maps $P_0$ to $[0: \gamma(\Delta):1]$.
Suppose that $g$ is given by the left multiplication of a $3\times 3$ matrix $M$.
Then there exist non-zero constants $\tau_0, \tau_1, \tau_2$ and $s, t, u$ such that
$$
M\vP=
\left[
\begin{array}{ccc}
0 & t & u\\
s\gamma(\Delta) & 0 & u \\
s & t & 0
\end{array}
\right],
\quad
{}^t M\inv \bar{\vP}=
\left[
\begin{array}{ccc}
\tau_0 & 0 & 0\\
0 & \tau_1 & 0 \\
0 & 0 & \tau_2
\end{array}
\right],
\;\;\textrm{where} 
\;\;
\vP:=[{\vp}_0, {\vp}_1, {\vp}_2].
$$
On the other hand, we have
$$
{}^t\vP\bar{\vP}=
\left[
\begin{array}{ccc}
0 & \kappa_{12} & \kappa_{13}\\
\kappa_{21} & 0 & \kappa_{23} \\
\kappa_{31} & \kappa_{32} & 0
\end{array}
\right].
$$
Combining these equations,  we obtain~\eqref{eq:frackappas}.
\end{proof}
\begin{remark}
By Chevalley-Warning theorem,
the non-singular conic $Q_1$ defined by $x^2+y^2+z^2=0$ has $q+1$ rational points over $\F_q$,
and it intersects $\Gamma_q$ at each of these $\F_q$-rational points with intersection multiplicity $2$.
Hence we can also make the list $\QQQ_q$ from $Q_1$ by the action of $\Aut(\Gamma_q)=\PGU_{3}(\F_{q^2})$ on $\P^2$.
\end{remark}
\section{Geometric construction}\label{sec:proof}
In this section, we
work in characteristic $5$.
By a  \emph{list}, we mean  an ordered  finite set.
We put
$$
\alpha:=\sqrt{2}\;\;\in\;\;  \F_{25}=\F_5(\alpha).
$$
With the help of Proposition~\ref{prop:crossratio}, we construct the following lists.
They are available  from the web page given at the end of Introduction. 
\begin{itemize}
\item The list $\Plist$ of the Weierstrass points of ${\Gamma_5}$;
that is, the list of $\F_{25}$-rational points of ${\Gamma_5}$.
\item The list $\Slist$ of  sets of  collinear six  points in $\Plist$, 
which  is regarded as the list $\SSS_5$ of special secant lines $L$  of ${\Gamma_5}$ by $L\mapsto L\cap {\Gamma_5}$.
\item The list $\Qlist$ of co-conical sets of  six  points in $\Plist$,
which  is regarded as the list $\QQQ_5$ of totally tangent conics $Q$  by $Q\mapsto Q\cap {\Gamma_5}$.
\end{itemize}
In the following, conics in ${\QQQ_5}$ are numbered as $Q_1, \dots, Q_{3150}$ according to the order of the list $\Qlist$.
For example,
the first member of $\Qlist$ is 
$$
\{\,(0:1:\pm 2),\;\; (1:0:\pm 2), \;\; (1:\pm 2: 0)\, \}, 
$$
and hence the conic $Q_1$ is defined by 
$\phi_1: x^2+y^2+z^2=0$.
From the lists $\Plist$, $\Slist$ and $\Qlist$, we obtain the following lists:
\begin{itemize}
\item The list $\SQlist=[\SSS(Q_1), \dots, \SSS(Q_{3150})]$  
of the sets $\SSS(Q_i)$ of  special secant lines of conics $Q_i$.
\item The list $\EQlist=[\phi_1, \dots, \phi_{3150}]$  of the defining equations  $\phi_i$ of conics $Q_i$.
\end{itemize}
From the list $\Qlist$,
we compute a $3150\times 3150$  matrix $M_0$ 
whose $(i, j)$ entry is equal to  $|Q_i\cap Q_j\cap {\Gamma_5}|$.
From the list $\SQlist$,
we compute a $3150\times 3150$  matrix $M_1$ 
whose $(i, j)$ entry is equal to  $|\SSS(Q_i)\cap \SSS(Q_j)|$.
\par
\smallskip
Suppose that  a non-singular conic $C$ is defined by
an equation ${}^t\vx\, F_C\,  \vx=0$,
where $F_C$ is a $3\times 3$ symmetric matrix.
Then two non-singular conics $C$ and $C\sprime$ intersect at four distinct  points transversely
if and only if
the cubic polynomial 
$$
f:=\det (F_C+t F_{C\sprime})
$$
 of $t$ has no multiple roots.
From the list $\EQlist$, we calculate the discriminant of the polynomials $f$
for $Q_i, Q_j\in \QQQ_5$, and 
compute a $3150\times 3150$ matrix $M_2$
whose $(i, j)$ entry  is $1$ if $|Q_i\cap Q_j|=4$
and is $0$ if $i=j$ or $|Q_i\cap Q_j|<4$.
\par
\smallskip
From the matrices $M_1$ and $M_2$,
we  compute the adjacency matrix $A_{\graphG}$ of $\graphG$.
From the matrix $A_{\graphG}$,
we compute
the list $\Dlist$   of the connected components of $\graphG$.
It turns out that $\Dlist$ consists of $150$ members $D_1, \dots, D_{150}$,
and that each connected component
has $21$ vertices.
For example, the connected component $D_1$ of $\graphG$ containing $Q_1$
consists of the conics 
given in Table~\ref{table:D1},
 and their adjacency relation is
 given in Table~\ref{table:adjD1}, 
where  distinct conics $Q$ at the $i$th row and the $j$th column 
and  $Q\sprime$ at the $i\sprime$th row and the  $j\sprime$th column 
are adjacent if and only if $\{i, j\}\cap\{i\sprime, j\sprime\}\ne \emptyset$.
Thus an isomorphism $D_1\cong T(7)$ of graphs is established.
 Using the list $\Qlist$, we confirm
\begin{equation*}\label{eq:union126}
\bigcup_{Q\in D_1} (Q\cap {\Gamma_5}) ={\PPP_5}.
\end{equation*}
Since $\Aut({\Gamma_5})$ acts on ${\QQQ_5}$ transitively,
it acts on $\DDD$ transitively. 
Thus we have proved  Propositions~\ref{prop:G} and~\ref{prop:geomD}.
\begin{table}
{\small
\begin{eqnarray*}
Q_{{1}}&:& {x}^{2}+{y}^{2}+{z}^{2}=0 \\
Q_{{309}}&:&  \left( 2\,\alpha+2 \right) {x}^{2}+ \left( 3\,\alpha+2
 \right) {y}^{2}+{z}^{2}=0 \\
Q_{{434}}&:&  \left( 3\,\alpha+2 \right) {x}^{2}+ \left( 2\,\alpha+2
 \right) {y}^{2}+{z}^{2}=0 \\
Q_{{1454}}&:& 2\,{z}^{2}+xy=0 \\
Q_{{1535}}&:& 4\,{x}^{2}+4\,{y}^{2}+4\,{z}^{2}+4\,xy+yz+zx=0 \\
Q_{{1628}}&:& {x}^{2}+{y}^{2}+{z}^{2}+xy+yz+zx=0 \\
Q_{{2063}}&:& 3\,{z}^{2}+xy=0 \\
Q_{{2120}}&:& 4\,{x}^{2}+4\,{y}^{2}+4\,{z}^{2}+xy+4\,yz+zx=0 \\
Q_{{2187}}&:& {x}^{2}+{y}^{2}+{z}^{2}+4\,xy+4\,yz+zx=0 \\
Q_{{2445}}&:& 2\,{y}^{2}+zx=0 \\
Q_{{2489}}&:& 3\,{y}^{2}+zx=0 \\
Q_{{2511}}&:&  \left( 2\,\alpha+2 \right) {x}^{2}+{y}^{2}+ \left( 3\,
\alpha+2 \right) {z}^{2}+ \left( 3\,\alpha+2 \right) xy+ \left( 2\,
\alpha+2 \right) yz+zx=0 \\
Q_{{2556}}&:&  \left( 2\,\alpha+3 \right) {x}^{2}+4\,{y}^{2}+ \left( 3\,
\alpha+3 \right) {z}^{2}+ \left( 2\,\alpha+2 \right) xy+ \left( 2\,
\alpha+3 \right) yz+zx=0 \\
Q_{{2592}}&:&  \left( 3\,\alpha+2 \right) {x}^{2}+{y}^{2}+ \left( 2\,
\alpha+2 \right) {z}^{2}+ \left( 3\,\alpha+3 \right) xy+ \left( 2\,
\alpha+3 \right) yz+zx=0 \\
Q_{{2615}}&:&  \left( 3\,\alpha+3 \right) {x}^{2}+4\,{y}^{2}+ \left( 2\,
\alpha+3 \right) {z}^{2}+ \left( 2\,\alpha+3 \right) xy+ \left( 2\,
\alpha+2 \right) yz+zx=0 \\
Q_{{2708}}&:& 2\,{x}^{2}+yz=0 \\
Q_{{2790}}&:& 3\,{x}^{2}+yz=0 \\
Q_{{3082}}&:&  \left( 2\,\alpha+3 \right) {x}^{2}+4\,{y}^{2}+ \left( 3\,
\alpha+3 \right) {z}^{2}+ \left( 3\,\alpha+3 \right) xy+ \left( 3\,
\alpha+2 \right) yz+zx=0 \\
Q_{{3086}}&:&  \left( 3\,\alpha+2 \right) {x}^{2}+{y}^{2}+ \left( 2\,
\alpha+2 \right) {z}^{2}+ \left( 2\,\alpha+2 \right) xy+ \left( 3\,
\alpha+2 \right) yz+zx=0 \\
Q_{{3116}}&:&  \left( 3\,\alpha+3 \right) {x}^{2}+4\,{y}^{2}+ \left( 2\,
\alpha+3 \right) {z}^{2}+ \left( 3\,\alpha+2 \right) xy+ \left( 3\,
\alpha+3 \right) yz+zx=0 \\
Q_{{3122}}&:&  \left( 2\,\alpha+2 \right) {x}^{2}+{y}^{2}+ \left( 3\,
\alpha+2 \right) {z}^{2}+ \left( 2\,\alpha+3 \right) xy+ \left( 3\,
\alpha+3 \right) yz+zx=0 
\end{eqnarray*}
}
\vskip -.1cm
\caption{Vertices of  $D_1$}\label{table:D1}
\end{table}
 \begin{table}
  {\small
 $$
 \begin {array}{ccccccc} -&Q_{{1}}&Q_{{309}}&Q_{{2615}}&Q_{{2511}}&Q_{{3116}}&Q_{{3122}}
\\\noalign{\smallskip}
& -&Q_{{434}}&Q_{{3082}}&Q_{{3086}}&Q_{{2556}}&Q_{{2592}}
\\\noalign{\smallskip}
&& -&Q_{{1535}}&Q_{{1628}}&Q_{{2120}}&Q_{{2187}}
\\\noalign{\smallskip}
&&& -&Q_{{1454}}&Q_{{2489}}&Q_{{2790}}
\\\noalign{\smallskip}
&&&& -&Q_{{2708}}&Q_{{2445}}
\\\noalign{\smallskip}
&&&&& -&Q_{{2063}}
\end {array}
$$
}
\vskip -.1cm
\caption{Adjacency relation on $D_1$}\label{table:adjD1}
\end{table}
\par
%
Using the matrix $M_0$  and the list $\Dlist$,
we confirm Proposition~\ref{prop:typet} and~\ref{prop:typeT}.
We then calculate a $3150\times 150$ matrix 
whose $(\nu, j)$ entry is $t(Q_{\nu}, D_j)$ if $Q_{\nu}\notin D_j$ and $0$ if $Q_{\nu}\in D_j$.
We then calculate a $150\times 150$ matrix $\Tmat$ 
whose $(i, j)$ entry is $T(D_i, D_j)$ if $i\ne j$ and $0$ if $i=j$.
Then Proposition~\ref{prop:symtypeT} is confirmed.
From  the matrix $\Tmat$,  we obtain  the adjacency matrix $A_H$ of  the graph $H$ in Theorem~\ref{thm:mainHfSg}.
It turns out that $H$ has exactly three connected components
whose set of vertices are denoted by 
$\CCC_1, \CCC_2, \CCC_3$.
Then it is easy to confirm that each connected component  of $H$
is a strongly regular graph of parameters $(v, k, \lambda, \mu)=(50, 7, 0, 1)$.
Therefore Theorem~\ref{thm:mainHfSg} is proved. 
Using $\Tmat$ and the sets $\CCC_1, \CCC_2, \CCC_3$,  we confirm 
Proposition~\ref{prop:TDD}.
\par
\smallskip
We then  calculate 
the adjacency matrix $A_{H\sprime}$ of  the graph $H\sprime$
form $\Tmat$.
Then it is easy to confirm that,
for any $i$ and $j$ with $i\ne j$,
 $H\sprime|(\CCC_i\cup \CCC_j)$
is a strongly regular graph of parameters $(v, k, \lambda, \mu)=(100, 22, 0, 6)$.
Thus Theorem~\ref{thm:mainHgSm} is proved.
Using $\Tmat$, the adjacency matrix of   $H$ and the sets $\CCC_1, \CCC_2, \CCC_3$,
we confirm Proposition~\ref{prop:coqliques}.
Then Theorem~\ref{thm:mainMcL} follows from~\cite{MR2917927},
or we can compute the adjacency matrix of $H\spprime$ and confirm Theorem~\ref{thm:mainMcL}
directly.
\par
\bigskip
Note that
the conic $Q_1 \in {\QQQ_5}$ defined by $x^2+y^2+z^2=0$ has a parametric presentation
$$
t\mapsto (t^2+4: 2t: 2\,t^2+2).
$$
Using the list $\EQlist$ and this parametric presentation,
we can calculate
$$
n(Q_i):=[\nu_1, \nu_2, \nu_3, \nu_4]
$$
for each $i>1$, 
where $\nu_m$ is the number of points in $Q_1\cap Q_i$
at which $Q_1$ and $Q_i$ intersect with intersection multiplicity $m$.
In Table~\ref{table:IP},
we give the number $N$ of conics $Q$
that have an  intersection pattern  with $Q_1$ prescribed by $n(Q)$ and 
$$
a(Q):=|Q_1\cap Q\cap {\Gamma_5}|,\quad 
s(Q):=|\SSS(Q_1)\cap \SSS(Q)|.
$$
\begin{table}
{\small
$$
\renewcommand{\arraystretch}{1.1}
\begin{array}{ccccl}
a(Q) & s(Q) &n(Q) &N &\textrm{an example}\\
\hline
6 & 15 &-&1 &{x}^{2}+{y}^{2}+{z}^{2}=0 \\
0 & 3 &[4,0,0,0]&10& \left( 2\,\alpha+2 \right) {x}^{2}+ \left( 3\,\alpha+2 \right) {y}^{2}+{z}^{2}=0 \\
0 & 3&[0,2,0,0]&20& \left( \alpha+2 \right) {x}^{2}+\alpha\,{y}^{2}+\alpha\,{z}^{2}+yz=0 \\
1 & 5&[0,0,0,1]&24&3\,\alpha\,{x}^{2}+ \left( 3\,\alpha+4 \right) {y}^{2}+ \left( 3\,\alpha+1 \right) {z}^{2}+yz=0 \\
2 & 3&[0,2,0,0]&30& \left( 3\,\alpha+2 \right) {x}^{2}+{y}^{2}+{z}^{2}=0 \\
0 & 0&[0,2,0,0]&30& \left( 2\,\alpha+1 \right) {x}^{2}+ \left( 2\,\alpha+1 \right) {y}^{2}+ \left( 2\,\alpha+1 \right) {z}^{2}+xy+yz+zx=0 \\
2 & 1&[0,2,0,0]&45& \left( 2\,\alpha+3 \right) {x}^{2}+{y}^{2}+{z}^{2}=0 \\
1 & 0&[1,0,1,0]&120&3\,\alpha\,{x}^{2}+ \left( \alpha+2 \right) {y}^{2}+3\,{z}^{2}+3\,xy+ \left( 2\,\alpha+3 \right) yz+zx=0 \\
0 & 1&[4,0,0,0]&390&3\,\alpha\,{x}^{2}+\alpha\,{y}^{2}+\alpha\,{z}^{2}+yz=0 \\
1 & 1&[2,1,0,0]&600& \left( 4\,\alpha+2 \right) {x}^{2}+ \left( 3\,\alpha+4 \right) {y}^{2}+ \left( 3\,\alpha+1 \right) {z}^{2}+yz=0 \\
0 & 0&[4,0,0,0]&1880&4\,{x}^{2}+4\,{y}^{2}+ \left( 2\,\alpha+4 \right) {z}^{2}+ \left( 4\,\alpha+4 \right) xy+yz+zx=0
\end{array}
$$
}
\vskip -.1cm
\caption{Classification of conics by intersection pattern with $Q_1$}\label{table:IP}
\end{table}
\par
%
Next we prove results in Section~\ref{subsec:6cliques}.
By the adjacency matrix $A_{\graphG}$ of the graph $\graphG$ and the list $\Dlist$,
we can construct the list $\Klist$  of $6$-cliques in  $\graphG$.
Using $\Qlist$, $\Dlist$, $\Klist$ and the adjacency matrix of $H$ obtained from $\Tmat$,
we confirm Propositions~\ref{prop:uniqueK},~\ref{prop:adjHbyK},
and construct the list  $\PK$.
The two $6$-cliques containing $Q_1$  are
\begin{eqnarray*}
K_a &=&\{Q_{{1}},Q_{{309}},Q_{{2511}},Q_{{2615}},Q_{{3116}},Q_{{3122}}\}, \\
K_b &=&\{Q_{{1}},Q_{{434}},Q_{{2556}},Q_{{2592}},Q_{{3082}},Q_{{3086}}\}. 
\end{eqnarray*}
Then their partners in $\PK$ are
\begin{eqnarray*}
K_a\sprime &=&\{Q_{{8}},Q_{{171}},Q_{{827}},Q_{{936}},Q_{{1973}},Q_{{2038}}\}, \\
K_b\sprime &=&\{Q_{{22}},Q_{{160}},Q_{{816}},Q_{{947}},Q_{{1984}},Q_{{2034}}\},
\end{eqnarray*}
the defining equations of whose  members are given in Table~\ref{table:Ksprime}.
\begin{table}
{\small
\begin{eqnarray*}
Q_{{8}} &: &
3\,\alpha\,{x}^{2}+ \left( 3\,\alpha+4 \right) {y}^{2}+ \left( 3\,
\alpha+1 \right) {z}^{2}+yz=0 \\
Q_{{171}} &: &
2\,\alpha\,{x}^{2}+ \left( 2\,\alpha+1 \right) {y}^{2}+ \left( 2\,\alpha+4 \right) {z}^{2}+yz=0 \\
Q_{{827}} &: &
 \left( 2\,\alpha+4 \right) {x}^{2}+2\,\alpha\,{y}^{2}+ \left( 2\,\alpha+1 \right) {z}^{2}+zx=0 \\
Q_{{936}} &: &
 \left( 3\,\alpha+1 \right) {x}^{2}+3\,\alpha\,{y}^{2}+ \left( 3\,\alpha+4 \right) {z}^{2}+zx=0 \\
Q_{{1973}} &: &
 \left( 3\,\alpha+4 \right) {x}^{2}+ \left( 3\,\alpha+1 \right) {y}^{2}+3\,\alpha\,{z}^{2}+xy=0 \\
Q_{{2038}} &: &
 \left( 2\,\alpha+1 \right) {x}^{2}+ \left( 2\,\alpha+4 \right) {y}^{2}+2\,\alpha\,{z}^{2}+xy=0 \\
Q_{{22}} &: &
2\,\alpha\,{x}^{2}+ \left( 2\,\alpha+4 \right) {y}^{2}+ \left( 2\,\alpha+1 \right) {z}^{2}+yz=0 \\
Q_{{160}} &: &
3\,\alpha\,{x}^{2}+ \left( 3\,\alpha+1 \right) {y}^{2}+ \left( 3\,\alpha+4 \right) {z}^{2}+yz=0 \\
Q_{{816}} &: &
 \left( 3\,\alpha+4 \right) {x}^{2}+3\,\alpha\,{y}^{2}+ \left( 3\,\alpha+1 \right) {z}^{2}+zx=0 \\
Q_{{947}} &: &
 \left( 2\,\alpha+1 \right) {x}^{2}+2\,\alpha\,{y}^{2}+ \left( 2\,\alpha+4 \right) {z}^{2}+zx=0 \\
Q_{{1984}} &: &
 \left( 2\,\alpha+4 \right) {x}^{2}+ \left( 2\,\alpha+1 \right) {y}^{2}+2\,\alpha\,{z}^{2}+xy=0 \\
Q_{{2034}} &: &
 \left( 3\,\alpha+1 \right) {x}^{2}+ \left( 3\,\alpha+4 \right) {y}^{2}+3\,\alpha\,{z}^{2}+xy=0.
\end{eqnarray*}
}
\vskip -.2cm
\caption{Conics in $K_a\sprime$ and $K_b\sprime$}\label{table:Ksprime}
\end{table}
Each of 
these conics
intersects $Q_1$ only  at one point with intersection multiplicity $4$.
For example, $Q_8$ intersects $Q_1$ only at $(0:3:1)$.
Since $\Aut({\Gamma_5})$ acts on ${\QQQ_5}$ transitively, 
Proposition~\ref{prop:geomK} is proved.
\begin{remark}
From~Table~\ref{table:IP}, we see that 
there exist exactly $12$ conics $Q\sprime\in {\QQQ_5}$
that intersect $Q_1$ only  at one point with intersection multiplicity $4$
but are not contained in $K_a\sprime \cup K_b\sprime$.
An example of such a conic is 
$$
4\,\alpha\,{x}^{2}+ \left( 4\,\alpha+4 \right) {y}^{2}+ \left( 4\,
\alpha+1 \right) {z}^{2}+yz=0.
$$
These $12$  conics are contained in $\wt{\CCC}_1$.
\end{remark}
\par
\smallskip
The proofs in Sections~\ref{subsec:SQ} and~\ref{subsec:RRRTTT} 
are analogous  and we omit the details.
\begin{remark}
Some families of strongly regular graphs  have been  constructed from Hermitian varieties
in Chakravarti~\cite{MR1210086}.
\end{remark}
\section{Group-theoretic construction}\label{sec:proof2}
In order to verify the group-theoretic construction (Theorems~\ref{thm:S5},~\ref{thm:T7},~\ref{thm:A7} and Proposition~\ref{prop:twobb21}),
we make the following computational data.
Since the order $378000$ of $\PGU_3(\F_{25})$ is large,
it uses too much memory  to make the list of all elements of $\PGU_3(\F_{25})$.
Instead
we make the lists of elements of 
\begin{eqnarray*}
G_S &:=& \stab(p_1), \quad \textrm{where $p_1=(0:1:2)\in \PPP_5$, \quad and}\\
G_T &:=&\textrm{a complete set of representatives of $\PGU_3(\F_{25})/G_S$}.
\end{eqnarray*}
Then we have $|G_S|=3000$ and 
$|G_T|=126$,
and each element of $\PGU_3(\F_{25})$ is uniquely written as 
$\tau\sigma$, where $\sigma\in G_S$ and $\tau\in G_T$. 
We then calculate the permutation on the set $\PPP_5$
induced by each of the  $3000+126$ elements of $G_S$ and $G_T$.
From this list of permutations,
we calculate the permutation on the set $\QQQ_5$
induced by each   element of $G_S$ and $ G_T$,
and from this list,
we calculate the permutation on the set $\DDD$
induced by each   element of $G_S$ and $G_T$.
Thus we obtain three permutation representations of $\PGU_3(\F_{25})$
on $\PPP_5$, $\QQQ_5$ and $\DDD$,
each of which is faithful.
\begin{remark}
In order to determine the structure of subgroups of $\PGU_3(\F_{25})$,
it is more convenient to use these permutation representations 
than to handle  $3\times 3$ matrices with components in $\F_{25}$.
\end{remark}
Then we calculate $\stab(Q_1)$ and its subgroups 
$$
\stab(Q_1, Q):=\stab(Q_1)\cap \stab(Q)
$$
for each  $Q\in \QQQ_5$.
Combining this data with the adjacency matrix $A_{\graphG}$ of the graph $\graphG$,
we confirm the first-half of Theorem~\ref{thm:S5}.
\par
\smallskip
If $Q\in \QQQ_5$ is distinct from $Q_1$,
then $\stab(Q_1, Q)$ is isomorphic to one of the following groups:
$$
0, \;\; \Z/2\Z, \;\; \Z/3\Z, \;\; (\Z/2\Z)^2, \;\; \DDDD_{8}, \;\; \DDDD_{10}, \;\; \DDDD_{12}, \;\; \AAAA_4,
$$
where $\DDDD_{2n}$ is the dihedral group of order $2n$.
Table~\ref{table:IP} is refined to Table~\ref{table:orbQs} by using this new data.
The  action of $\stab(Q_1)\cong \SSSS_5$ decomposes $\QQQ_5$ into $64$  orbits,
and each row of Table~\ref{table:orbQs}
contains
$$
 N\cdot |\stab(Q_1, Q)|/120
$$
orbits of size $120/|\stab(Q_1, Q)|$.
\begin{table}
{\small
$$
\renewcommand{\arraystretch}{1.1}
\begin{array}{cccccc}
\rm{No.} &a(Q) & s(Q) &n(Q) &\stab(Q_1, Q)&N \\
\hline
1& 6 & 15 &-&  \SSSS_5& 1\\ 
2& 0 & 3 &[4, 0, 0, 0] &  \AAAA_4& 10  \\ 
3& 0 & 3 &[0, 2, 0, 0] &  \DDDD_{12}& 20  \\ 
4& 1 & 5 &[0, 0, 0, 1] &  \DDDD_{10}& 24  \\ 
5& 2 & 3 &[0, 2, 0, 0] &  \DDDD_{8}& 30  \\ 
6& 0 & 0 &[0, 2, 0, 0] & \DDDD_{12}& 30 \\ 
7& 2 & 1 &[0, 2, 0, 0] &  \DDDD_8& 45 \\ 
8& 1 & 0 &[1, 0, 1, 0] &  0& 120 \\ 
9& 0 & 1 &[4, 0, 0, 0] &  \Z/2\Z& 180 \\ 
10& 0 & 1 &[4, 0, 0, 0] &  (\Z/2\Z)^2& 210 \\ 
11& 1 & 1 &[2, 1, 0, 0] &  \Z/2\Z& 600 \\ 
12& 0 & 0 &[4, 0, 0, 0] &  0& 720 \\ 
13& 0 & 0 &[4, 0, 0, 0] &  \Z/2\Z& 900 \\ 
14& 0 & 0 &[4, 0, 0, 0] &  \Z/3\Z& 80 \\ 
15& 0 & 0 &[4, 0, 0, 0] &  (\Z/2\Z)^2& 180
\end{array}
$$
}
\vskip -.1cm
\caption{$\stab(Q_1, Q)$}\label{table:orbQs}
\end{table}
From each of these orbits other than $\{Q_1\}$,
we choose a representative conic $Q$ and confirm the following:
$$
\renewcommand{\arraystretch}{1.2}
\begin{array}{lll}
Q\in D_1 &\Longrightarrow & |\,\gen{\stab(Q_1), \stab(Q)}\,|=2520,\\
Q\notin D_1 &\Longrightarrow& |\,\gen{\stab(Q_1), \stab(Q)}\,|>2520.  \\
\end{array}
$$
Thus the second-half of Theorem~\ref{thm:S5}
is verified.
\begin{remark}
The conics in $\QQQ_5$ that are in the connected component $D_1$ of $\graphG$ 
but are not adjacent to $Q_1$ in $\graphG$ form one of the three orbits of size $10$
in the $6$th row of Table~\ref{table:orbQs}.
\end{remark}
We then calculate 
$\stab(D_1)$ and its subgroups 
$$
\stab(D_1, D):=\stab(D_1)\cap \stab(D)
$$
for each  $D\in \DDD$.
An isomorphism $\kappa: D_1\isom\, T(7)$ of graphs is obtained from the triangle in Table~\ref{table:adjD1}
by putting  $\kappa(Q_{\nu})=\{i, j\}$ if $Q_{\nu}\in D_1$ is at the $i$th row and the $j$th column of the triangle.
This map $\kappa$ gives rise to  a homomorphism
$$
\stab(D_1) \to\Aut(T(7))=\SSSS_7.
$$
We confirm that this homomorphism is injective, and 
verify Theorem~\ref{thm:T7}.
 Using the matrix $\Tmat$, 
 we also confirm Theorem~\ref{thm:A7} and Proposition~\ref{prop:twobb21}.
\begin{remark}
Each  connected component of the graph $H$ is an orbit of
the action of the subgroup $\PSU_{3} (\F_{25})$ of index $3$ in $\PGU_3(\F_{25})$
on $\DDD$.
\end{remark}
Combining all the results above,
we can construct the Hoffmann-Singleton graph and the Higman-Sims  graph  from $\PGU_3(\F_{25})$ 
without using any geometry.
We put
$$
\Delta:=\PGU_{3}(\F_{25}) \cap \PGO_{3}(\F_{25}).
$$
\begin{remark}
We have $\PGO_{3}(\F_{25})=\shortset{M\in \PGL_3(\F_{25})}{\textrm{${}^t M M$ is a diagonal matrix}}$.
Since $Q_1$ is defined by $x^2+y^2+z^2=0$,
we have $\Delta=\stab(Q_1)$.
\end{remark}
Consider the following five elements of $\PGU_3(\F_{25})$:
$$
\begin{array}{lll}
g_2:=\left[ \begin {array}{ccc} 1&0&0\\\noalign{\medskip}
0&\omega&0\\\noalign{\medskip}
0&0&\omega\inv\end {array} \right] &
(\omega:=2+3\alpha), &
g_3:=\left[ \begin {array}{ccc} 1&3&2\\\noalign{\medskip}3&1&2
\\\noalign{\medskip}2&2&1\end {array} \right], \quad   \\
g_4:=\left[ \begin {array}{ccc} 0&1&0\\\noalign{\medskip}1&0&0
\\\noalign{\medskip}0&0&4\end {array} \right], &
g_5:= \left[ \begin {array}{ccc} 1&0&0\\\noalign{\medskip}0&0&4
\\\noalign{\medskip}0&4&0\end {array} \right], &
g_6:= \left[ \begin {array}{ccc} 0&1&0\\\noalign{\medskip}1&0&0
\\\noalign{\medskip}0&0&1\end {array} \right]. \\
\end{array}
$$
\begin{remark}
The elements $g_i$ ($i=2, \dots, 6$) belong to $\stab(D_1)$,
and correspond to $(1, 2)(i, i+1)\in \AAAA_7$
by the isomorphism $\stab(D_1)\cong \AAAA_7$ induced  by $\kappa: D_1\cong T(7)$.
Since these five permutations $(1, 2)(i, i+1)$  generate $\AAAA_7$,
we see that the elements $g_2, \dots, g_6$
generate $\stab(D_1)$.
\end{remark}
\begin{theorem}\label{thm:purelygroup}
Let $\Gamma$ be the subgroup of $\PGU_3(\F_{25})$ generated by $g_2, \dots, g_6$ above.
Then $\Gamma$ is isomorphic to $\AAAA_7$.
Moreover, if $\gamma\in \PGU_{3}(\F_{25})$ satisfies $\Delta\,\cap\, \gamma\inv \Delta \gamma\cong \AAAA_4$,
then $\Gamma$ is generated by $\Delta$ and $\gamma\inv \Delta \gamma$.
\par
Let $\VVV$ denote the set of subgroups of $\PGU_3(\F_{25})$ conjugate to $\Gamma$.
Then, we have $|\VVV|=150$, and  for  distinct elements $\Gamma\sprime, \Gamma\spprime\in \VVV$,
the group $\Gamma\sprime\cap \Gamma\spprime$ is isomorphic to one of the following:
$$
\AAAA_6, \quad  \PSL_2(\F_7),  \quad (\AAAA_4\times 3):2,  \quad \AAAA_5.
$$
For $n=360, 168, 72, 60$, we define  a subset $\EEE_{n}$ of the set of unordered pairs of 
distinct elements of $\VVV$ by 
$$
\EEE_{n} := \set{ \{\Gamma\sprime, \Gamma\spprime\} }{|\Gamma\sprime\cap \Gamma\spprime|=n}.
$$
Then the graph $(\VVV, \EEE_{360})$ has exactly three connected components $\CCC_1, \CCC_2, \CCC_3$,
and each $\CCC_i$ is the  Hoffmann-Singleton graph.
Moreover, for any $i\ne j$,
the graph $(\VVV, \EEE_{360}\cup \EEE_{168})|{(\CCC_i\cup \CCC_j)}$ is the Higman-Sims  graph.
\end{theorem}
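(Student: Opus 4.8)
The plan is to prove every clause by translating through the two identifications $\Delta=\stab(Q_1)$ and $\Gamma=\stab(D_1)$, reducing each assertion to Theorems~\ref{thm:S5},~\ref{thm:T7} and~\ref{thm:A7} together with the graph identifications of Theorems~\ref{thm:mainHfSg} and~\ref{thm:mainHgSm}. For the first assertion I would invoke the remark preceding the statement: $g_2,\dots,g_6\in\stab(D_1)$, and under the isomorphism $\kappa:\stab(D_1)\isom\AAAA_7$ of Theorem~\ref{thm:T7} they correspond to $(1,2)(i,i+1)$ for $i=2,\dots,6$. These five even permutations generate $\AAAA_7$: since $(1,2)$ commutes with $(i,i+1)$ for $i\ge 3$, the product $\bigl((1,2)(i,i+1)\bigr)\bigl((1,2)(j,j+1)\bigr)=(i,i+1)(j,j+1)$ generates the alternating group on $\{3,\dots,7\}$, and adjoining $(1,2)(2,3)=(1,2,3)$ supplies the remaining $3$-cycles. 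Hence $\Gamma=\stab(D_1)\cong\AAAA_7$.

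For the second assertion I set $Q:=\gamma\inv(Q_1)$, so that $\gamma\inv\Delta\gamma=\stab(Q)$ and $\Delta\cap\gamma\inv\Delta\gamma=\stab(Q_1)\cap\stab(Q)$. If this intersection is isomorphic to $\AAAA_4$, then by the first part of Theorem~\ref{thm:S5} the conics $Q_1$ and $Q$ are adjacent in $\graphG$, hence lie in the same connected component, which is $D_1$. Consequently $\stab(Q_1)$ and $\stab(Q)$ are both contained in $\stab(D_1)=\Gamma$, giving $\gen{\Delta,\gamma\inv\Delta\gamma}\subseteq\Gamma$; and the second part of Theorem~\ref{thm:S5} gives $\gen{\Delta,\gamma\inv\Delta\gamma}\cong\AAAA_7$, of order $2520=|\Gamma|$, so the inclusion is an equality.

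Next I would identify $\VVV$ with $\DDD$. Because $\PGU_3(\F_{25})$ acts transitively on $\DDD$ with stabilizer $\stab(D_1)=\Gamma$, the conjugates of $\Gamma$ are precisely the subgroups $\stab(D)$ for $D\in\DDD$, so $D\mapsto\stab(D)$ maps $\DDD$ onto $\VVV$. This map is injective: for $D\ne D\sprime$, Theorem~\ref{thm:A7} identifies $\stab(D)\cap\stab(D\sprime)$ with one of $\Sigma_a,\Sigma_b,\Sigma_b\sprime,\Sigma_c,\Sigma_d$, each a proper subgroup of $\AAAA_7$, whereas $\stab(D)=\stab(D\sprime)$ would make the intersection all of $\stab(D)\cong\AAAA_7$. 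Hence $|\VVV|=|\DDD|=150$. The same application of Theorem~\ref{thm:A7} shows that for distinct $\Gamma\sprime=\stab(D\sprime)$ and $\Gamma\spprime=\stab(D\spprime)$ the intersection $\Gamma\sprime\cap\Gamma\spprime=\stab(D\sprime)\cap\stab(D\spprime)$ is conjugate to $\Sigma_a\cong\AAAA_6$, to $\Sigma_b$ or $\Sigma_b\sprime\cong\PSL_2(\F_7)$, to $\Sigma_c\cong(\AAAA_4\times 3):2$, or to $\Sigma_d\cong\AAAA_5$, of respective orders $360,168,72,60$; this is the asserted list of isomorphism types and shows the four sets $\EEE_n$ are well defined, partitioning the pairs by the type $T(D\sprime,D\spprime)$.

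Finally I would read off the graphs through the bijection $\VVV\cong\DDD$. By Theorem~\ref{thm:A7}, $\{\Gamma\sprime,\Gamma\spprime\}\in\EEE_{360}$ exactly when $T(D\sprime,D\spprime)=\alpha^{15}\gamma^6$, so $(\VVV,\EEE_{360})$ is the graph $H$ of Theorem~\ref{thm:mainHfSg}; it therefore has three connected components $\CCC_1,\CCC_2,\CCC_3$, each the Hoffman-Singleton graph. Likewise $\EEE_{168}$ corresponds to $T=\beta^{21}$, so $\EEE_{360}\cup\EEE_{168}$ is the edge set of the graph $H\sprime$ of Theorem~\ref{thm:mainHgSm}, whence $(\VVV,\EEE_{360}\cup\EEE_{168})|(\CCC_i\cup\CCC_j)=H\sprime|(\CCC_i\cup\CCC_j)$ is the Higman-Sims graph for $i\ne j$. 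The bulk of this theorem is thus a packaging of the earlier results through the dictionary $\Delta=\stab(Q_1)$, $\Gamma=\stab(D_1)$; the one genuinely non-formal point is the injectivity of $D\mapsto\stab(D)$, equivalently that each $\AAAA_7\cong\stab(D)$ fixes a unique element of $\DDD$, which I expect to be the main thing to pin down and which, as above, follows at once from the proper-subgroup conclusion of Theorem~\ref{thm:A7}. (In the computational framework of the paper it is instead confirmed directly from the permutation representation of $\PGU_3(\F_{25})$ on $\DDD$.)
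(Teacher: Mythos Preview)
Your proposal is correct and is precisely the argument the paper has in mind: the theorem is stated as a repackaging of Theorems~\ref{thm:mainHfSg}, \ref{thm:mainHgSm}, \ref{thm:S5}, \ref{thm:T7} and \ref{thm:A7} through the dictionary $\Delta=\stab(Q_1)$, $\Gamma=\stab(D_1)$ set up in the two remarks immediately preceding it, and the paper gives no further proof. Your write-up makes explicit the two small points the paper leaves to the reader, namely that $(1,2)(i,i{+}1)$ for $i=2,\dots,6$ generate $\AAAA_7$ and that $D\mapsto\stab(D)$ is injective (which you correctly deduce from the proper-subgroup conclusion of Theorem~\ref{thm:A7}).
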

We can also construct the McLaughlin graph from $\VVV$ and $\EEE_n$  by
the recipe of Inoue~\cite{MR2917927} in the same way as Theorem~\ref{thm:mainMcL}.
\bibliographystyle{plain}

\def\cftil#1{\ifmmode\setbox7\hbox{$\accent"5E#1$}\else
  \setbox7\hbox{\accent"5E#1}\penalty 10000\relax\fi\raise 1\ht7
  \hbox{\lower1.15ex\hbox to 1\wd7{\hss\accent"7E\hss}}\penalty 10000
  \hskip-1\wd7\penalty 10000\box7} \def\cprime{$'$} \def\cprime{$'$}
  \def\cprime{$'$} \def\cprime{$'$}

\end{document}